\providecommand{\U}[1]{\protect\rule{.1in}{.1in}}
\newtheorem{theorem}{Theorem}
\newtheorem{corollary}[theorem]{Corollary}
\newtheorem{definition}[theorem]{Definition}
\newtheorem{remark}[theorem]{Remark}
\newenvironment{proof}[1][Proof]{\noindent \textbf{#1.} }{\  \rule{0.5em}{0.5em}}
\begin{document}

\title{\textbf{A Wald-type test statistic for testing linear hypothesis in logistic
regression models based on minimum density power divergence estimator}}
\author{Basu, A.$^{1}$; Ghosh, A.$^{2}$; Mandal, A.$^{3}$; Martin, N.$^{4}$ and Pardo,
L.$^{4}$\\$^{1}${\small Indian Statistical Institute, Kolkata 700108, India}\\$^{2}${\small University of Oslo, Oslo, Norway}\\$^{3}${\small University of Pittsburgh, Pittsburgh, PA 15260, USA}\\$^{4}${\small Complutense University of Madrid, 28040 Madrid, Spain} }
\date{\today }
\maketitle

\begin{abstract}
In this paper a robust version of the classical Wald test statistics for
linear hypothesis in the logistic regression model is introduced and its
properties are explored. We study the problem under the assumption of random
covariates although some ideas with non random covariates are also considered.
The family of tests considered is based on the minimum density power
divergence estimator instead of the maximum likelihood estimator and it is
referred to as the Wald-type test statistic in the paper. We obtain the
asymptotic distribution and also study the robustness properties of the Wald
type test statistic. The robustness of the tests is investigated theoretically
through the influence function analysis as well as suitable practical
examples. It is theoretically established that the level as well as the power
of the Wald-type tests are stable against contamination, while the classical
Wald type test breaks down in this scenario. Some classical examples are
presented which numerically substantiate the theory developed. Finally a
simulation study is included to provide further confirmation of the validity
of the theoretical results established in the paper.

\end{abstract}

\begin{center}
\bigskip
\end{center}

\noindent\textbf{MSC}{\small : }62F35, 62F05

\noindent\textbf{Keywords}{\small :} Influence function, Logistic regression,
Minimum density power divergence estimators, Random explatory variables,
Robustness, Wald-type test statistics.

\section{Introduction\label{sec1}}

Experimental settings often include dichotomous response data, wherein a
Bernoulli model may be assumed for the independence response variables
$Y_{1},...,Y_{n}$, with
\[
\Pr(Y_{i}=1)=\pi_{i}\text{ and }\Pr(Y_{i}=0)=1-\pi_{i},\text{ }i=1,...,n.
\]
In many cases, a series of explanatory variables $x_{i0},...,x_{ik}$ may be
associated with each $Y_{i}$ ($x_{i0}=1,$ $x_{ij}\in\mathbb{R}$, $i=1,...,n$,
$j=1,...,k$, $k<n$). We shall assume that the binomial parameter, $\pi_{i} $,
is linked to the linear predictor $\sum_{j=0}^{k}\beta_{j}x_{ij}$ via the
logit function, i.e.,
\begin{equation}
\text{\textrm{logit}}\left(  \pi_{i}\right)  =\sum\limits_{j=0}^{k}\beta
_{j}x_{ij},\label{logit}%
\end{equation}
where \textrm{logit}$(p)=\log(p/(1-p))$. In the following, we shall denote the
binomial parameter $\pi_{i}$, by
\begin{equation}
\pi_{i}=\pi(\boldsymbol{x}_{i}^{T}\boldsymbol{\beta})=\frac{e^{\boldsymbol{x}%
_{i}^{T}\boldsymbol{\beta}}}{1+e^{\boldsymbol{x}_{i}^{T}\boldsymbol{\beta}}%
},\text{ }i=1,...,n,\label{1.1}%
\end{equation}
where\ $\boldsymbol{x}_{i}^{T}=\left(  x_{i0},...,x_{ik}\right)  $ and
$\boldsymbol{\beta}=\left(  \beta_{0},...,\beta_{k}\right)  ^{T}$ is a $(k+1)
$-dimensional vector of unknown parameters with $\beta_{i}\in\left(
-\infty,\infty\right)  $. The \textquotedblleft design
matrix\textquotedblright, $\mathbb{X}=\left(  \boldsymbol{x}_{1}%
,...,\boldsymbol{x}_{n}\right)  ^{T}$, is assumed to be full rank
(\emph{rank}$\left(  \mathbb{X}\right)  =k+1$), without any loss of generality.

Let $\boldsymbol{M}$ be any matrix of $r$ rows and $k+1$ columns with
$\mathrm{rank}(\boldsymbol{M})=r$, and $\boldsymbol{m}$ a vector of order $r$
with specified constants such that $\mathrm{rank}(\boldsymbol{M}%
^{T},\boldsymbol{m})=r$. If we are interested in testing
\begin{equation}
H_{0}:\boldsymbol{M}^{T}\boldsymbol{\beta}=\boldsymbol{m},\label{1.2}%
\end{equation}
the Wald test statistic is usually used in which $\boldsymbol{\beta}$ is
estimated using the maximum likelihood estimator (MLE). Notice that if we
consider $\boldsymbol{M}=\boldsymbol{I}_{k+1}$ and $\boldsymbol{m}%
=\boldsymbol{\beta}_{0}$, we get the Wald-type test statistic presented by
Bianco and Martinez (2009) based on a weighted Bianco and Yohai (1996)
estimator. It is well known that the MLE of $\boldsymbol{\beta}$ can be
severely affected by outlying observations. Croux et al. (2002) discuss the
breakdown behavior of the MLE in the logistic regression model and show that
the MLE breaks down when several outliers are added to a data set. In the
recdent years several authors have attempted to derive robust estimates of the
parameters in the logistic regression model; see for instance Pregibon (1982),
Morgenthaler (1992), Carrol and Pedersen (1993), Cristmann (1994), Bianco and
Yohai (1996), Croux and Haesbroeck (2003), Bondell (2005, 2008) and Hobza et
al. (2012). Our interest in this paper is to present a family of Wald-type
test statistics based on the robust minimum density power divergence estimator
for testing the general linear hypothesis given in (\ref{1.2}).

In Section \ref{sec2} we present the minimum density power divergence
estimator for $\boldsymbol{\beta}$. The Wald-type test statistics, based on
the minimum density power divergence estimator, are presented in Section
\ref{sec3}, as well as their asymptotic properties. The theoretical robustness
properties are presented in Section \ref{sec4} and finally, Section \ref{sec5}
and \ref{sec6}\ are devoted to present a simulation study and real data
examples, respectively.

\section{Minimum density power divergence estimator\label{sec2}}

If we denote by $y_{_{1},},...,y_{n}$ the observed values of the random
variables $Y_{1},...,Y_{n},$ the likelihood function for the logistic
regression model is given by
\begin{equation}
\mathcal{L}\left(  \boldsymbol{\beta}\right)  =\prod\limits_{i=1}^{n}%
\pi^{y_{i}}(\boldsymbol{x}_{i}^{T}\boldsymbol{\beta})\left(  1-\pi
(\boldsymbol{x}_{i}^{T}\boldsymbol{\beta})\right)  ^{1-y_{i}}.\label{1.3}%
\end{equation}
So the MLE of $\boldsymbol{\beta}$, $\widehat{\boldsymbol{\beta}}$, is
obtained minimizing log-likelihood function almost surely over
$\boldsymbol{\beta}$ belonging to%
\[
\Theta=\left\{  \left(  \beta_{0},...,\beta_{k}\right)  ^{T}:\beta_{j}%
\in\left(  -\infty,\infty\right)  ,\text{ }j=0,...,k\right\}  =\mathbb{R}%
^{k+1}.
\]

We consider the probability vectors,
\[
\widehat{\boldsymbol{p}}=\left(  \frac{y_{1}}{n},\frac{1-y_{1}}{n},\frac
{y_{2}}{n},\frac{1-y_{2}}{n},...,\frac{y_{n}}{n},\frac{1-y_{n}}{n}\right)
^{T}%
\]
and
\[
\boldsymbol{p}\left(  \boldsymbol{\beta}\right)  =\left(  \pi(\boldsymbol{x}%
_{1}^{T}\boldsymbol{\beta})\frac{1}{n},\left(  1-\pi(\boldsymbol{x}_{1}%
^{T}\boldsymbol{\beta})\right)  \frac{1}{n},...,\pi(\boldsymbol{x}_{n}%
^{T}\boldsymbol{\beta})\frac{1}{n},\left(  1-\pi(\boldsymbol{x}_{n}%
^{T}\boldsymbol{\beta})\right)  \frac{1}{n}\right)  ^{T}.
\]
The Kullback-Leibler divergence measure between the probability vectors
$\widehat{\boldsymbol{p}}$ and $\boldsymbol{p}\left(  \boldsymbol{\beta
}\right)  $ is given by
\begin{equation}
d_{KL}\left(  \widehat{\boldsymbol{p}},\boldsymbol{p}\left(  \boldsymbol{\beta
}\right)  \right)  =\sum\limits_{i=1}^{n}\sum\limits_{j=1}^{2}\frac{y_{ij}}%
{n}\log\frac{y_{ij}}{\pi_{j}(\boldsymbol{x}_{i}^{T}\boldsymbol{\beta}%
)},\label{1.5}%
\end{equation}
where%
\[
\pi_{1}(\boldsymbol{x}_{i}^{T}\boldsymbol{\beta})=\pi(\boldsymbol{x}_{i}%
^{T}\boldsymbol{\beta})\text{, }\pi_{2}(\boldsymbol{x}_{i}^{T}%
\boldsymbol{\beta})=1-\pi(\boldsymbol{x}_{i}^{T}\boldsymbol{\beta}),\text{
}y_{i1}=y_{i}\text{ and }y_{i2}=1-y_{i}.
\]
It is not difficult to establish that
\begin{equation}
d_{KL}\left(  \widehat{\boldsymbol{p}},\boldsymbol{p}\left(  \boldsymbol{\beta
}\right)  \right)  =c-\frac{1}{n}\log\mathcal{L}\left(  \boldsymbol{\beta
}\right)  .\label{1.6}%
\end{equation}
Therefore, the MLE of $\boldsymbol{\beta}$ can be defined by
\begin{equation}
\widehat{\boldsymbol{\beta}}=\arg\min_{\boldsymbol{\beta\in}\Theta}%
d_{KL}\left(  \widehat{\boldsymbol{p}},\boldsymbol{p}\left(  \boldsymbol{\beta
}\right)  \right)  .\label{1.7}%
\end{equation}

Based on (\ref{1.7}) we can use any divergence measure $d\left(
\widehat{\boldsymbol{p}},\boldsymbol{p}\left(  \boldsymbol{\beta}\right)
\right)  $ in order to define a minimum divergence estimator for
$\boldsymbol{\beta}$. In this paper we shall use the density power divergence
measure defined by Basu et al. (1998) because the minimum density power
divergence estimators have excellent robustness properties, see for instance
Basu et al. (2011, 2013, 2015, 2016), Ghosh et al. (2015, 2016). The density
power divergence between the probability vectors $\widehat{\boldsymbol{p}}$
and $\boldsymbol{p}\left(  \boldsymbol{\beta}\right)  $ is given by
\begin{equation}
d_{\lambda}\left(  \widehat{\boldsymbol{p}},\boldsymbol{p}\left(
\boldsymbol{\beta}\right)  \right)  =\frac{1}{n^{1+\lambda}}\left\{
\sum\limits_{i=1}^{n}\left(  \sum\limits_{j=1}^{2}\pi_{j}^{1+\lambda
}(\boldsymbol{x}_{i}^{T}\boldsymbol{\beta})-\left(  1+\frac{1}{\lambda
}\right)  \sum\limits_{j=1}^{2}y_{ij}\pi_{j}^{\lambda}(\boldsymbol{x}_{i}%
^{T}\boldsymbol{\beta})\right)  +\frac{n}{\lambda}\right\} \label{1.4}%
\end{equation}
for $\lambda>0$. For $\lambda=0$, we have
\[
d_{0}\left(  \widehat{\boldsymbol{p}},\boldsymbol{p}\left(  \boldsymbol{\beta
}\right)  \right)  =\lim_{\lambda\rightarrow0}d_{\lambda}\left(
\widehat{\boldsymbol{p}},\boldsymbol{p}\left(  \boldsymbol{\beta}\right)
\right)  =d_{KL}\left(  \widehat{\boldsymbol{p}},\boldsymbol{p}\left(
\boldsymbol{\beta}\right)  \right)  .
\]

Based on (\ref{1.7}) and (\ref{1.4}), we shall define the minimum density
power divergence estimator in the following way.

\begin{definition}
The minimum density power divergence estimator for the parameter
$\boldsymbol{\beta}$, $\widehat{\boldsymbol{\beta}}_{\lambda}$, in the
logistic regression model is given by
\[
\widehat{\boldsymbol{\beta}}_{\lambda}=\arg\min_{\boldsymbol{\beta\in}\Theta
}d_{\lambda}\left(  \widehat{\boldsymbol{p}},\boldsymbol{p}\left(
\boldsymbol{\beta}\right)  \right)  ,
\]
where $d_{\lambda}\left(  \widehat{\boldsymbol{p}},\boldsymbol{p}\left(
\boldsymbol{\beta}\right)  \right)  $ was defined in (\ref{1.4}).
\end{definition}

In order to obtain the estimating equations we must get the derivative of
(\ref{1.4}) with respect to $\boldsymbol{\beta}$. First we are going to write
expression (\ref{1.4}) in the following way,%
\begin{align*}
d_{\lambda}\left(  \widehat{\boldsymbol{p}},\boldsymbol{p}\left(
\boldsymbol{\beta}\right)  \right)   &  =\frac{1}{n^{1+\lambda}}\left\{
\sum\limits_{i=1}^{n}\left(  \pi^{^{1+\lambda}}(\boldsymbol{x}_{i}%
^{T}\boldsymbol{\beta})+\left(  1-\pi(\boldsymbol{x}_{i}^{T}\boldsymbol{\beta
})\right)  ^{1+\lambda}\right.  \right. \\
&  \left.  -\left.  \left(  1+\frac{1}{\lambda}\right)  \left(  y_{i}%
\pi^{\lambda}(\boldsymbol{x}_{i}^{T}\boldsymbol{\beta})+(1-y_{i})\left(
1-\pi(\boldsymbol{x}_{i}^{T}\boldsymbol{\beta})\right)  ^{\lambda}\right)
\right)  +\frac{n}{\lambda}\right\}  .
\end{align*}
Now, taking into account that
\[
\frac{\partial\pi(\boldsymbol{x}_{i}^{T}\boldsymbol{\beta})}{\partial
\boldsymbol{\beta}}=\pi(\boldsymbol{x}_{i}^{T}\boldsymbol{\beta})\left(
1-\pi(\boldsymbol{x}_{i}^{T}\boldsymbol{\beta})\right)  \boldsymbol{x}%
_{i}\text{ and\ }\frac{\partial\left(  1-\pi(\boldsymbol{x}_{i}^{T}%
\boldsymbol{\beta})\right)  }{\partial\boldsymbol{\beta}}=-\pi(\boldsymbol{x}%
_{i}^{T}\boldsymbol{\beta})\left(  1-\pi(\boldsymbol{x}_{i}^{T}%
\boldsymbol{\beta})\right)  \boldsymbol{x}_{i}%
\]
and after some algebra, we get
\[
\frac{\partial d_{\lambda}\left(  \widehat{\boldsymbol{p}},\boldsymbol{p}%
\left(  \boldsymbol{\beta}\right)  \right)  }{\partial\boldsymbol{\beta}%
}=\frac{1+\lambda}{n^{\lambda+1}}\sum\limits_{i=1}^{n}(e^{\lambda
\boldsymbol{x}_{i}^{T}\boldsymbol{\beta}}+e^{\boldsymbol{x}_{i}^{T}%
\boldsymbol{\beta}})\frac{e^{\boldsymbol{x}_{i}^{T}\boldsymbol{\beta}}%
-y_{i}(1+e^{\boldsymbol{x}_{i}^{T}\boldsymbol{\beta}})}{(1+e^{\boldsymbol{x}%
_{i}^{T}\boldsymbol{\beta}})^{\lambda+2}}\boldsymbol{x}_{i}.
\]
Therefore, the estimating equations for $\lambda>0$ are given by
\begin{equation}
\sum\limits_{i=1}^{n}\frac{e^{\lambda\boldsymbol{x}_{i}^{T}\boldsymbol{\beta}%
}+e^{\boldsymbol{x}_{i}^{T}\boldsymbol{\beta}}}{(1+e^{\boldsymbol{x}_{i}%
^{T}\boldsymbol{\beta}})^{\lambda+1}}\left(  \pi(\boldsymbol{x}_{i}%
^{T}\boldsymbol{\beta})-y_{i}\right)  \boldsymbol{x}_{i}=\boldsymbol{0}%
,\label{1.61}%
\end{equation}
where $\pi(\boldsymbol{x}_{i}^{T}\boldsymbol{\beta})$ is (\ref{1.1}). Based on
the previous results we have established the following theorem.

\begin{theorem}
The minimum density power divergence estimator of $\boldsymbol{\beta}$,
$\widehat{\boldsymbol{\beta}}_{\lambda}$, can be obtained as the solution of
the system of equations given in (\ref{1.61}).
\end{theorem}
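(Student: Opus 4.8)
The plan is to invoke the first-order optimality condition. Since the parameter space $\Theta=\mathbb{R}^{k+1}$ is open and the map $\boldsymbol{\beta}\mapsto d_{\lambda}(\widehat{\boldsymbol{p}},\boldsymbol{p}(\boldsymbol{\beta}))$ is smooth on $\Theta$, any minimizer $\widehat{\boldsymbol{\beta}}_{\lambda}$ must be a stationary point, i.e. it satisfies $\partial d_{\lambda}(\widehat{\boldsymbol{p}},\boldsymbol{p}(\boldsymbol{\beta}))/\partial\boldsymbol{\beta}=\boldsymbol{0}$. Hence it suffices to show that this gradient equation is equivalent to the system (\ref{1.61}).

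First I would write $d_{\lambda}$ in the per-observation form displayed just before the theorem, separating the ``model'' contributions $\pi^{1+\lambda}(\boldsymbol{x}_{i}^{T}\boldsymbol{\beta})+(1-\pi(\boldsymbol{x}_{i}^{T}\boldsymbol{\beta}))^{1+\lambda}$ from the $y_{i}$-weighted terms. Differentiating term by term via the chain rule together with the two identities $\partial\pi(\boldsymbol{x}_{i}^{T}\boldsymbol{\beta})/\partial\boldsymbol{\beta}=\pi(\boldsymbol{x}_{i}^{T}\boldsymbol{\beta})(1-\pi(\boldsymbol{x}_{i}^{T}\boldsymbol{\beta}))\boldsymbol{x}_{i}$ and $\partial(1-\pi(\boldsymbol{x}_{i}^{T}\boldsymbol{\beta}))/\partial\boldsymbol{\beta}=-\pi(\boldsymbol{x}_{i}^{T}\boldsymbol{\beta})(1-\pi(\boldsymbol{x}_{i}^{T}\boldsymbol{\beta}))\boldsymbol{x}_{i}$, then collecting the coefficient of each $\boldsymbol{x}_{i}$ and cancelling the $1/\lambda$ against the weight $1+1/\lambda$, reproduces exactly the gradient formula recorded immediately above the theorem.

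Next I would simplify the summand before equating the gradient to zero. The scalar prefactor $(1+\lambda)/n^{\lambda+1}$ is nonzero and can be dropped. In the numerator I would use the definition (\ref{1.1}) of $\pi$ to factor
\[
e^{\boldsymbol{x}_{i}^{T}\boldsymbol{\beta}}-y_{i}(1+e^{\boldsymbol{x}_{i}^{T}\boldsymbol{\beta}})=(1+e^{\boldsymbol{x}_{i}^{T}\boldsymbol{\beta}})\bigl(\pi(\boldsymbol{x}_{i}^{T}\boldsymbol{\beta})-y_{i}\bigr),
\]
so that one factor $(1+e^{\boldsymbol{x}_{i}^{T}\boldsymbol{\beta}})$ cancels with the denominator, reducing its exponent from $\lambda+2$ to $\lambda+1$; what remains is precisely the system (\ref{1.61}).

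The computation is entirely routine. The one point worth a sentence is that stationarity is necessary but not by itself sufficient for a global minimum, so the statement should be understood as: whenever $\widehat{\boldsymbol{\beta}}_{\lambda}$ exists it lies in the interior of $\Theta$ and is therefore a solution of (\ref{1.61}) (existence can fail, exactly as for the MLE under data separation). The only real ``difficulty'' is bookkeeping the exponents $\lambda$, $1+\lambda$, $\lambda+1$, $\lambda+2$ and the weight $1+1/\lambda$ through the differentiation; there is no conceptual obstacle.
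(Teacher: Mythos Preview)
Your proposal is correct and follows essentially the same route as the paper: the theorem there is stated as an immediate consequence of the gradient computation carried out in the preceding paragraph, which is exactly the differentiation-and-simplification you outline. Your added remark that stationarity is only a necessary condition (and that existence can fail under separation) is a welcome clarification that the paper leaves implicit.
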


If we consider $\lambda=0$ in (\ref{1.61}), we get the estimating equations
for the MLE as%
\[
\sum\limits_{i=1}^{n}\left(  \pi(\boldsymbol{x}_{i}^{T}\boldsymbol{\beta
})-y_{i}\right)  \boldsymbol{x}_{i}=\boldsymbol{0}.
\]
Based on expression (\ref{1.61}), we can write the MDPDE for the logistic
regression model by
\[
\sum\limits_{i=1}^{n}\boldsymbol{\Psi}_{\lambda}\left(  \boldsymbol{x}%
_{i},y_{i},\boldsymbol{\beta}\right)  =\boldsymbol{0},
\]
with
\begin{equation}
\boldsymbol{\Psi}_{\lambda}\left(  \boldsymbol{x}_{i},y_{i},\boldsymbol{\beta
}\right)  =(e^{\lambda\boldsymbol{x}_{i}^{T}\boldsymbol{\beta}}%
+e^{\boldsymbol{x}_{i}^{T}\boldsymbol{\beta}})\frac{e^{\boldsymbol{x}_{i}%
^{T}\boldsymbol{\beta}}-y_{i}(1+e^{\boldsymbol{x}_{i}^{T}\boldsymbol{\beta}}%
)}{(1+e^{\boldsymbol{x}_{i}^{T}\boldsymbol{\beta}})^{\lambda+2}}%
\boldsymbol{x}_{i}.\label{1.100}%
\end{equation}
In order to get the asymptotic distribution of the MDPDE of $\boldsymbol{\beta
}$, $\widehat{\boldsymbol{\beta}}_{\lambda}$, we are going to assume that not
only the explanatory variables are random but are also identically distributed
and moreover
\[
\left(  \boldsymbol{X}_{1},Y_{1}\right)  ,....,\left(  \boldsymbol{X}%
_{n},Y_{n}\right)
\]
are independent and identically distributed. We shall assume that
$\boldsymbol{X}_{1},...,\boldsymbol{X}_{n}$ is a random sample from a random
variable $\boldsymbol{X}$ with marginal distribution function
$H(\boldsymbol{x})$. By following the method given in Maronna et al. (2006),
the asymptotic variance covariance matrix of $\sqrt{n}$
$\widehat{\boldsymbol{\beta}}_{\lambda}$ is
\[
\boldsymbol{J}_{\lambda}^{-1}\left(  \boldsymbol{\beta}_{0}\right)
\boldsymbol{K}_{\lambda}\left(  \boldsymbol{\beta}_{0}\right)  \boldsymbol{J}%
_{\lambda}^{-1}\left(  \boldsymbol{\beta}_{0}\right)  ,
\]
where
\[
\boldsymbol{K}_{\lambda}\left(  \boldsymbol{\beta}\right)  =E\left[
\boldsymbol{\Psi}_{\lambda}\left(  \boldsymbol{X},Y,\boldsymbol{\beta}\right)
\boldsymbol{\Psi}_{\lambda}^{T}\left(  \boldsymbol{X},Y,\boldsymbol{\beta
}\right)  \right]  =%
{\displaystyle\int_{\mathcal{X}}}
E\left[  \boldsymbol{\Psi}_{\lambda}\left(  \boldsymbol{x},Y,\boldsymbol{\beta
}\right)  \boldsymbol{\Psi}_{\lambda}^{T}\left(  \boldsymbol{x}%
,Y,\boldsymbol{\beta}\right)  \right]  dH(\boldsymbol{x}),
\]
$\mathcal{X}$ is the support of $\boldsymbol{X}$, and
\[
\boldsymbol{J}_{\lambda}\left(  \boldsymbol{\beta}\right)  =E\left[
\frac{\partial\boldsymbol{\Psi}_{\lambda}\left(  \boldsymbol{X}%
,Y,\boldsymbol{\beta}\right)  }{\partial\boldsymbol{\beta}^{T}}\right]  =%
{\displaystyle\int_{\mathcal{X}}}
E\left[  \frac{\partial\boldsymbol{\Psi}_{\lambda}\left(  \boldsymbol{x}%
,Y,\boldsymbol{\beta}\right)  }{\partial\boldsymbol{\beta}^{T}}\right]
dH(\boldsymbol{x}).
\]
In relation to the matrix $\boldsymbol{K}_{\lambda}\left(  \boldsymbol{\beta
}_{0}\right)  $, we have
\[
E\left[  \boldsymbol{\Psi}_{\lambda}\left(  \boldsymbol{x},Y,\boldsymbol{\beta
}\right)  \boldsymbol{\Psi}_{\lambda}^{T}\left(  \boldsymbol{x}%
,Y,\boldsymbol{\beta}\right)  \right]  =\frac{(e^{\lambda\boldsymbol{x}%
^{T}\boldsymbol{\beta}}+e^{\boldsymbol{x}^{T}\boldsymbol{\beta}})^{2}%
}{(1+e^{\boldsymbol{x}^{T}\boldsymbol{\beta}})^{2(\lambda+2)}}E\left[  \left(
e^{\boldsymbol{x}^{T}\boldsymbol{\beta}}-Y(1+e^{\boldsymbol{x}^{T}%
\boldsymbol{\beta}})\right)  ^{2}\right]  \boldsymbol{xx}^{T},
\]
but $E\left[  Y^{2}\right]  =\pi(\boldsymbol{x}^{T}\boldsymbol{\beta})$ and
\[
E\left[  \left(  e^{\boldsymbol{x}^{T}\boldsymbol{\beta}}%
-Y(1+e^{\boldsymbol{x}^{T}\boldsymbol{\beta}})\right)  ^{2}\right]
=e^{\boldsymbol{x}^{T}\boldsymbol{\beta}}.
\]
Therefore
\begin{equation}
\boldsymbol{K}_{\lambda}\left(  \boldsymbol{\beta}\right)  =E\left[
\boldsymbol{\Psi}_{\lambda}\left(  \boldsymbol{X},Y,\boldsymbol{\beta}\right)
\boldsymbol{\Psi}_{\lambda}^{T}\left(  \boldsymbol{X},Y,\boldsymbol{\beta
}\right)  \right]  =%
{\displaystyle\int_{\mathcal{X}}}
\frac{(e^{\lambda\boldsymbol{x}^{T}\boldsymbol{\beta}}+e^{\boldsymbol{x}%
^{T}\boldsymbol{\beta}})^{2}}{(1+e^{\boldsymbol{x}^{T}\boldsymbol{\beta}%
})^{2(\lambda+2)}}e^{\boldsymbol{x}^{T}\boldsymbol{\beta}}\boldsymbol{xx}%
^{T}dH(\boldsymbol{x}).\label{K1}%
\end{equation}
An estimator of $\boldsymbol{K}_{\lambda}\left(  \boldsymbol{\beta}\right)  $
will be%
\[
\widehat{\boldsymbol{K}}_{\lambda}\left(  \boldsymbol{\beta}\right)  =%
{\displaystyle\int_{\mathcal{X}}}
\frac{(e^{\lambda\boldsymbol{x}^{T}\boldsymbol{\beta}}+e^{\boldsymbol{x}%
^{T}\boldsymbol{\beta}})^{2}}{(1+e^{\boldsymbol{x}^{T}\boldsymbol{\beta}%
})^{2(\lambda+2)}}e^{\boldsymbol{x}^{T}\boldsymbol{\beta}}\boldsymbol{xx}%
^{T}dH_{n}(\boldsymbol{x}),
\]
where $H_{n}(\boldsymbol{x})$ the empirical distribution function associated
with the sample $\boldsymbol{x}_{1},...,\boldsymbol{x}_{n}$. Then%
\begin{equation}
\widehat{\boldsymbol{K}}_{\lambda}\left(  \boldsymbol{\beta}\right)  =\frac
{1}{n}\sum\limits_{i=1}^{n}\frac{(e^{\lambda\boldsymbol{x}^{T}%
\boldsymbol{\beta}}+e^{\boldsymbol{x}^{T}\boldsymbol{\beta}})^{2}%
}{(1+e^{\boldsymbol{x}^{T}\boldsymbol{\beta}})^{2(\lambda+2)}}%
e^{\boldsymbol{x}_{i}^{T}\boldsymbol{\beta}}\boldsymbol{x}_{i}\boldsymbol{x}%
_{i}^{T}.\label{K1e}%
\end{equation}
It is interesting to observe that for $\lambda=0$ we get
\begin{align*}
\widehat{\boldsymbol{K}}_{0}\left(  \boldsymbol{\beta}\right)   &  =\frac
{1}{n}\sum\limits_{i=1}^{n}\frac{(1+e^{\boldsymbol{x}_{i}^{T}\boldsymbol{\beta
}})^{2}}{(1+e^{\boldsymbol{x}_{i}^{T}\boldsymbol{\beta}})^{4}}%
e^{\boldsymbol{x}_{i}^{T}\boldsymbol{\beta}}\boldsymbol{x}_{i}\boldsymbol{x}%
_{i}^{T}=\frac{1}{n}\mathbb{X}^{T}\text{ }\mathrm{diag}\left(  \pi
_{i}(\boldsymbol{x}^{T}\boldsymbol{\beta})\left(  1-\pi_{i}(\boldsymbol{x}%
^{T}\boldsymbol{\beta})\right)  \right)  _{i=1,...,n}\mathbb{X}\\
&  =\boldsymbol{I}_{F}\left(  \boldsymbol{\beta}\right)  ,
\end{align*}
with $\boldsymbol{I}_{F}\left(  \boldsymbol{\beta}_{0}\right)  $ being the
Fisher information matrix associated to the logistic regression model.

To compute the matrix $\boldsymbol{J}_{\lambda}\left(  \boldsymbol{\beta}%
_{0}\right)  $, first we need to calculate
\[
\frac{\partial\boldsymbol{\Psi}_{\lambda}\left(  \boldsymbol{x}%
,y,\boldsymbol{\beta}\right)  }{\partial\boldsymbol{\beta}^{T}}=L_{1}%
(\boldsymbol{x},y,\boldsymbol{\beta})+L_{2}(\boldsymbol{x},y,\boldsymbol{\beta
}),
\]
where
\[
L_{1}(\boldsymbol{x},y,\boldsymbol{\beta})=(\lambda e^{\lambda\boldsymbol{x}%
^{T}\boldsymbol{\beta}}+e^{\boldsymbol{x}^{T}\boldsymbol{\beta}}%
)\frac{e^{\boldsymbol{x}^{T}\boldsymbol{\beta}}-y(1+e^{\boldsymbol{x}%
^{T}\boldsymbol{\beta}})}{(1+e^{\boldsymbol{x}^{T}\boldsymbol{\beta}%
})^{\lambda+2}}\boldsymbol{xx}^{T}%
\]
and
\begin{align*}
L_{2}(\boldsymbol{x},y,\boldsymbol{\beta}) &  =(e^{\lambda\boldsymbol{x}%
^{T}\boldsymbol{\beta}}+e^{\boldsymbol{x}^{T}\boldsymbol{\beta}})\left(
\frac{\left(  e^{\boldsymbol{x}^{T}\boldsymbol{\beta}}-ye^{\boldsymbol{x}%
^{T}\boldsymbol{\beta}}\right)  (1+e^{\boldsymbol{x}^{T}\boldsymbol{\beta}%
})^{\lambda+2}}{(1+e^{\boldsymbol{x}^{T}\boldsymbol{\beta}})^{2(\lambda+2)}%
}\right. \\
&  \left.  -\frac{\left(  \lambda+2\right)  \left(  (1+e^{\boldsymbol{x}%
^{T}\boldsymbol{\beta}})^{\lambda+1}\right)  e^{\boldsymbol{x}^{T}%
\boldsymbol{\beta}}\left(  e^{\boldsymbol{x}^{T}\boldsymbol{\beta}%
}-y(1+e^{\boldsymbol{x}^{T}\boldsymbol{\beta}})\right)  }{(1+e^{\boldsymbol{x}%
^{T}\boldsymbol{\beta}})^{2(\lambda+2)}}\right)  \boldsymbol{xx}^{T},
\end{align*}
and hence%
\[
E\left[  \frac{\partial\boldsymbol{\Psi}_{\lambda}\left(  \boldsymbol{x}%
,Y,\boldsymbol{\beta}\right)  }{\partial\boldsymbol{\beta}^{T}}\right]
=E\left[  L_{1}(\boldsymbol{x},Y,\boldsymbol{\beta})\right]  +E\left[
L_{2}(\boldsymbol{x},Y,\boldsymbol{\beta})\right]  .
\]
But
\[
E\left[  e^{\boldsymbol{x}^{T}\boldsymbol{\beta}}-Y(1+e^{\boldsymbol{x}%
^{T}\boldsymbol{\beta}})\right]  =e^{\boldsymbol{x}^{T}\boldsymbol{\beta}%
}-\frac{e^{\boldsymbol{x}^{T}\boldsymbol{\beta}}}{1+e^{\boldsymbol{x}%
^{T}\boldsymbol{\beta}}}(1+e^{\boldsymbol{x}^{T}\boldsymbol{\beta}})=0.
\]
Therefore%
\[
E\left[  L_{1}(\boldsymbol{x},Y,\boldsymbol{\beta})\right]  =\boldsymbol{0}%
_{(k+1)(k+1)}.
\]
On the other hand
\begin{align*}
E\left[  L_{2}(\boldsymbol{x},Y,\boldsymbol{\beta})\right]   &  =\frac
{e^{\lambda\boldsymbol{x}^{T}\boldsymbol{\beta}}+e^{\boldsymbol{x}%
^{T}\boldsymbol{\beta}}}{(1+e^{\boldsymbol{x}^{T}\boldsymbol{\beta}%
})^{2(\lambda+2)}}\left(  (1+e^{\boldsymbol{x}^{T}\boldsymbol{\beta}%
})^{\lambda+2}E\left[  e^{\boldsymbol{x}^{T}\boldsymbol{\beta}}%
-Ye^{\boldsymbol{x}^{T}\boldsymbol{\beta}}\right]  \right. \\
&  \left.  +\left(  \lambda+2\right)  (1+e^{\boldsymbol{x}^{T}%
\boldsymbol{\beta}})^{\lambda+1}e^{\boldsymbol{x}^{T}\boldsymbol{\beta}%
}E\left[  e^{\boldsymbol{x}^{T}\boldsymbol{\beta}}-Y(1+e^{\boldsymbol{x}%
^{T}\boldsymbol{\beta}})\right]  \right)  \boldsymbol{xx}^{T}\\
&  =\frac{e^{\lambda\boldsymbol{x}^{T}\boldsymbol{\beta}}+e^{\boldsymbol{x}%
^{T}\boldsymbol{\beta}}}{(1+e^{\boldsymbol{x}^{T}\boldsymbol{\beta}}%
)^{\lambda+3}}e^{\boldsymbol{x}^{T}\boldsymbol{\beta}}\boldsymbol{xx}^{T}.
\end{align*}
Finally,%
\begin{align}
\boldsymbol{J}_{\lambda}\left(  \boldsymbol{\beta}\right)   &  =\int%
_{\mathcal{X}}E\left[  \frac{\partial\boldsymbol{\Psi}_{\lambda}\left(
\boldsymbol{x},Y,\boldsymbol{\beta}\right)  }{\partial\boldsymbol{\beta}^{T}%
}\right]  dH(\boldsymbol{x)}\label{J1}\\
&  =\int_{\mathcal{X}}\frac{e^{\lambda\boldsymbol{x}^{T}\boldsymbol{\beta}%
}+e^{\boldsymbol{x}^{T}\boldsymbol{\beta}}}{(1+e^{\boldsymbol{x}%
^{T}\boldsymbol{\beta}})^{\lambda+3}}e^{\boldsymbol{x}^{T}\boldsymbol{\beta}%
}\boldsymbol{xx}^{T}dH(\boldsymbol{x}),\nonumber
\end{align}
and an estimator of $\boldsymbol{J}_{\lambda}\left(  \boldsymbol{\beta}%
_{0}\right)  $ is given by
\begin{equation}
\widehat{\boldsymbol{J}}_{\lambda}\left(  \boldsymbol{\beta}\right)  =\frac
{1}{n}\sum\limits_{i=1}^{n}\frac{e^{\lambda\boldsymbol{x}_{i}^{T}%
\boldsymbol{\beta}}+e^{\boldsymbol{x}_{i}^{T}\boldsymbol{\beta}}%
}{(1+e^{\boldsymbol{x}_{i}^{T}\boldsymbol{\beta}})^{\lambda+3}}%
e^{\boldsymbol{x}_{i}^{T}\boldsymbol{\beta}}\boldsymbol{x}_{i}\boldsymbol{x}%
_{i}^{T}.\label{J1e}%
\end{equation}
In particular, for $\lambda=0$, we have%
\begin{align*}
\widehat{\boldsymbol{J}}_{0}\left(  \boldsymbol{\beta}\right)   &  =\frac
{1}{n}\mathbb{X}^{T}\text{ }\mathrm{diag}\left(  \pi_{i}(\boldsymbol{x}%
^{T}\boldsymbol{\beta})\left(  1-\pi_{i}(\boldsymbol{x}^{T}\boldsymbol{\beta
})\right)  \right)  _{i=1,...,n}\mathbb{X}\\
&  =\boldsymbol{I}_{F}\left(  \boldsymbol{\beta}\right)  .
\end{align*}
From the sequence of above results, the next theorem follows.

\begin{theorem}
The asymptotic distribution of the MDPDE for $\boldsymbol{\beta}$,
$\widehat{\boldsymbol{\beta}}_{\lambda}$, is given by
\[
\sqrt{n}(\widehat{\boldsymbol{\beta}}_{\lambda}-\boldsymbol{\beta}%
_{0})\underset{n\rightarrow\infty}{\overset{\mathcal{L}}{\longrightarrow}%
}\mathcal{N}\left(  \boldsymbol{0},\boldsymbol{\Sigma}_{\lambda}\left(
\boldsymbol{\beta}_{0}\right)  \right)
\]
where
\[
\boldsymbol{\Sigma}_{\lambda}\left(  \boldsymbol{\beta}_{0}\right)
=\boldsymbol{J}_{\lambda}^{-1}\left(  \boldsymbol{\beta}_{0}\right)
\boldsymbol{K}_{\lambda}\left(  \boldsymbol{\beta}_{0}\right)  \boldsymbol{J}%
_{\lambda}^{-1}\left(  \boldsymbol{\beta}_{0}\right)
\]
and the matrices $\boldsymbol{J}_{\lambda}\left(  \boldsymbol{\beta}%
_{0}\right)  $ and $\boldsymbol{K}_{\lambda}\left(  \boldsymbol{\beta}%
_{0}\right)  $ where defined in (\ref{J1}) and (\ref{K1}), respectively.
\end{theorem}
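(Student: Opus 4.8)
The plan is to treat $\widehat{\boldsymbol{\beta}}_{\lambda}$ as an M-estimator (Z-estimator) determined by the estimating equations $\frac{1}{n}\sum_{i=1}^{n}\boldsymbol{\Psi}_{\lambda}(\boldsymbol{X}_{i},Y_{i},\widehat{\boldsymbol{\beta}}_{\lambda})=\boldsymbol{0}$, with $\boldsymbol{\Psi}_{\lambda}$ as in (\ref{1.100}), and to apply the standard asymptotic theory for such estimators, as developed e.g.\ in Maronna et al.\ (2006). The first step is Fisher consistency at the true value: using the conditional-mean identity $E[e^{\boldsymbol{x}^{T}\boldsymbol{\beta}_{0}}-Y(1+e^{\boldsymbol{x}^{T}\boldsymbol{\beta}_{0}})\mid\boldsymbol{X}=\boldsymbol{x}]=0$ (already exploited in the computation of $\boldsymbol{J}_{\lambda}$ above), one gets $E[\boldsymbol{\Psi}_{\lambda}(\boldsymbol{X},Y,\boldsymbol{\beta}_{0})]=\boldsymbol{0}$, so $\boldsymbol{\beta}_{0}$ is the population root of the estimating function. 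Combined with identifiability of the logistic model (which holds because the support of $\boldsymbol{X}$ spans $\mathbb{R}^{k+1}$) and the smoothness and local integrability of $\boldsymbol{\Psi}_{\lambda}$ and $\partial\boldsymbol{\Psi}_{\lambda}/\partial\boldsymbol{\beta}^{T}$, the general M-estimation results guarantee the existence of a consistent sequence of roots $\widehat{\boldsymbol{\beta}}_{\lambda}\overset{P}{\longrightarrow}\boldsymbol{\beta}_{0}$; I would quote this step rather than reprove it.

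The core of the argument is a first-order mean-value expansion of the estimating function about $\boldsymbol{\beta}_{0}$ (applied coordinate by coordinate, with intermediate points $\boldsymbol{\beta}_{n}^{\ast}$ lying between $\widehat{\boldsymbol{\beta}}_{\lambda}$ and $\boldsymbol{\beta}_{0}$, hence $\boldsymbol{\beta}_{n}^{\ast}\overset{P}{\longrightarrow}\boldsymbol{\beta}_{0}$):
\[
\boldsymbol{0}=\frac{1}{\sqrt{n}}\sum_{i=1}^{n}\boldsymbol{\Psi}_{\lambda}(\boldsymbol{X}_{i},Y_{i},\boldsymbol{\beta}_{0})+\Big(\frac{1}{n}\sum_{i=1}^{n}\frac{\partial\boldsymbol{\Psi}_{\lambda}}{\partial\boldsymbol{\beta}^{T}}(\boldsymbol{X}_{i},Y_{i},\boldsymbol{\beta}_{n}^{\ast})\Big)\sqrt{n}(\widehat{\boldsymbol{\beta}}_{\lambda}-\boldsymbol{\beta}_{0}).
\]
For the leading term, the summands are i.i.d.\ with mean $\boldsymbol{0}$ and covariance $\boldsymbol{K}_{\lambda}(\boldsymbol{\beta}_{0})$ given in (\ref{K1}), so the multivariate central limit theorem gives $\frac{1}{\sqrt{n}}\sum_{i}\boldsymbol{\Psi}_{\lambda}(\boldsymbol{X}_{i},Y_{i},\boldsymbol{\beta}_{0})\overset{\mathcal{L}}{\longrightarrow}\mathcal{N}(\boldsymbol{0},\boldsymbol{K}_{\lambda}(\boldsymbol{\beta}_{0}))$. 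For the bracketed matrix, a uniform (local) law of large numbers — continuity of $\partial\boldsymbol{\Psi}_{\lambda}/\partial\boldsymbol{\beta}^{T}$ in $\boldsymbol{\beta}$ together with an integrable envelope on a neighbourhood of $\boldsymbol{\beta}_{0}$ — combined with $\boldsymbol{\beta}_{n}^{\ast}\overset{P}{\longrightarrow}\boldsymbol{\beta}_{0}$ shows it converges in probability to $E[\partial\boldsymbol{\Psi}_{\lambda}/\partial\boldsymbol{\beta}^{T}(\boldsymbol{X},Y,\boldsymbol{\beta}_{0})]=\boldsymbol{J}_{\lambda}(\boldsymbol{\beta}_{0})$ from (\ref{J1}).

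It then remains to solve for $\sqrt{n}(\widehat{\boldsymbol{\beta}}_{\lambda}-\boldsymbol{\beta}_{0})$. Since the integrand in (\ref{J1}) is a strictly positive scalar weight times $\boldsymbol{x}\boldsymbol{x}^{T}$ and the support of $\boldsymbol{X}$ spans $\mathbb{R}^{k+1}$, the matrix $\boldsymbol{J}_{\lambda}(\boldsymbol{\beta}_{0})$ is positive definite, hence invertible, and with probability tending to one the bracketed matrix above is invertible as well. Thus
\[
\sqrt{n}(\widehat{\boldsymbol{\beta}}_{\lambda}-\boldsymbol{\beta}_{0})=-\Big(\frac{1}{n}\sum_{i=1}^{n}\frac{\partial\boldsymbol{\Psi}_{\lambda}}{\partial\boldsymbol{\beta}^{T}}(\boldsymbol{X}_{i},Y_{i},\boldsymbol{\beta}_{n}^{\ast})\Big)^{-1}\frac{1}{\sqrt{n}}\sum_{i=1}^{n}\boldsymbol{\Psi}_{\lambda}(\boldsymbol{X}_{i},Y_{i},\boldsymbol{\beta}_{0}),
\]
and Slutsky's theorem — applying the continuous mapping theorem to the inverse of the bracketed matrix, whose probability limit is $\boldsymbol{J}_{\lambda}(\boldsymbol{\beta}_{0})$, and combining it with the CLT above — yields the limiting law $\mathcal{N}(\boldsymbol{0},\boldsymbol{J}_{\lambda}^{-1}(\boldsymbol{\beta}_{0})\boldsymbol{K}_{\lambda}(\boldsymbol{\beta}_{0})\boldsymbol{J}_{\lambda}^{-1}(\boldsymbol{\beta}_{0}))=\mathcal{N}(\boldsymbol{0},\boldsymbol{\Sigma}_{\lambda}(\boldsymbol{\beta}_{0}))$, as claimed.

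The only genuinely delicate points I anticipate are (i) the existence and $\sqrt{n}$-consistency of the root $\widehat{\boldsymbol{\beta}}_{\lambda}$, which is where one really needs the M-estimation machinery of Maronna et al.\ (2006), and (ii) verifying the integrable envelope required for the uniform law of large numbers near $\boldsymbol{\beta}_{0}$ — in particular a mild moment condition on the covariate law $H$ ensuring that $E\|\boldsymbol{\Psi}_{\lambda}(\boldsymbol{X},Y,\boldsymbol{\beta}_{0})\|^{2}<\infty$ and that $\partial\boldsymbol{\Psi}_{\lambda}/\partial\boldsymbol{\beta}^{T}$ has an integrable local bound. Everything else is routine, and much of it is already pre-computed in the displays for $\boldsymbol{K}_{\lambda}$ and $\boldsymbol{J}_{\lambda}$ that precede the theorem statement.
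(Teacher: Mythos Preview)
Your proposal is correct and follows essentially the same approach as the paper: the paper simply states that the theorem ``follows from the sequence of above results,'' meaning the identification of $\widehat{\boldsymbol{\beta}}_{\lambda}$ as an M-estimator with $\psi$-function $\boldsymbol{\Psi}_{\lambda}$, the citation of Maronna et al.\ (2006) for the sandwich form $\boldsymbol{J}_{\lambda}^{-1}\boldsymbol{K}_{\lambda}\boldsymbol{J}_{\lambda}^{-1}$, and the explicit computations of $\boldsymbol{K}_{\lambda}$ and $\boldsymbol{J}_{\lambda}$ that precede the theorem. You have simply unpacked the standard M-estimation argument (Fisher consistency, Taylor expansion, CLT plus uniform LLN, Slutsky) that the paper delegates to the reference, so there is no substantive difference.
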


\begin{remark}
\label{rem}We have considered that the covariates are random, a crutial
assumption to get the asymptotic distribution of the MDPDE by using,
\textquotedblleft in part\textquotedblright, the standard asymptotic theory
for M-estimators. It is interesting to highlight that whenever the covariates
were non-stochastic (fixed design case), the asymptotic distribution of the
MDPDE could be obtained from Ghosh and Basu (2013) without using the standard
asymptotic theory of M-estimators. In order to present the results in the most
general setting, we shall assume that the random variables $Y_{i}$ with
$i=1,...,I$, are binomial with parameters $n_{i}$ and $\pi_{i}=\pi
(\boldsymbol{x}_{i}^{T}\boldsymbol{\beta})$ instead of Bernoulli random
variables. We shall denote by $N=\sum_{i=1}^{I}n_{i}$ and let $n_{i1}$ denotes
the observed value of $Y_{i}$. We will assume that $I$ is fixed and for each
$i=1,\ldots,I$, construct the independent and identically distributed latent
observations $z_{i1},\ldots,z_{in_{i}}$ each following a Bernoulli
distribution with probability $\pi$ and $n_{i1}=\sum_{j=1}^{n_{i}}z_{ij}$.
Then, $N$ random observations $z_{11},\ldots,z_{1n_{1}}$, $z_{21}%
,\ldots,z_{2n_{2}}$, $\ldots$, $z_{I1},\ldots,z_{In_{I}}$ are independent but
have possibly different distribution with $z_{ij}\sim Ber(\pi_{i})$. This
falls under the general set-up of independent but non-homogeneous observations
as considered in Ghosh and Basu (2013) and hence it is immediately seen that
the corresponding estimating equations for the MDPDE,
$\widehat{\boldsymbol{\beta}}_{\lambda}^{\ast}$ in this context, for
$\lambda>0$ are given by
\[
\sum_{i=1}^{I}\frac{e^{\lambda\boldsymbol{x}_{i}^{T}\boldsymbol{\beta}%
}+e^{\boldsymbol{x}_{i}^{T}\boldsymbol{\beta}}}{(1+e^{\boldsymbol{x}_{i}%
^{T}\boldsymbol{\beta}})^{\lambda+1}}\left(  n_{i}\pi(\boldsymbol{x}_{i}%
^{T}\boldsymbol{\beta})-n_{i1}\right)  \boldsymbol{x}_{i}=\boldsymbol{0}%
\]
and for $\lambda=0$, by
\begin{equation}
\sum_{i=1}^{I}\left(  n_{i}\pi(\boldsymbol{x}_{i}^{T}\boldsymbol{\beta
})-n_{i1}\right)  \boldsymbol{x}_{i}=\boldsymbol{0}.\label{Vero}%
\end{equation}
Now, assuming
\[
\lim_{N\rightarrow\infty}\frac{n_{i}}{N}=\alpha_{i}\in\left(  0,1\right)
,\text{ }i=1,...,I,
\]
and following Ghosh and Basu (2013), we get the asymptotic distribution of the
MDPDE of $\boldsymbol{\beta}$, $\widehat{\boldsymbol{\beta}}_{\lambda}^{\ast}%
$, as given by
\begin{equation}
\sqrt{N}(\widehat{\boldsymbol{\beta}}_{\lambda}^{\ast}-\boldsymbol{\beta}%
_{0})\underset{N\rightarrow\infty}{\overset{\mathcal{L}}{\longrightarrow}%
}\mathcal{N}\left(  \boldsymbol{0},\boldsymbol{\Sigma}^{\ast}\left(
\boldsymbol{\beta}_{0}\right)  \right) \label{3.1}%
\end{equation}
where
\[
\boldsymbol{\Sigma}^{\ast}\left(  \boldsymbol{\beta}_{0}\right)
=\boldsymbol{J}^{\ast-1}\left(  \boldsymbol{\beta}_{0}\right)  \boldsymbol{K}%
^{\ast}\left(  \boldsymbol{\beta}_{0}\right)  \boldsymbol{J}^{\ast-1}\left(
\boldsymbol{\beta}_{0}\right)  .
\]
Here, the matrices $\boldsymbol{J}^{\ast}\left(  \boldsymbol{\beta}%
_{0}\right)  $ and $\boldsymbol{K}^{\ast}\left(  \boldsymbol{\beta}%
_{0}\right)  $ can be obtained directly from the general results of Ghosh and
Basu (2013) or from the simplified results in the context of Bernoulli
logistic regression with fixed design in Ghosh and Basu (2015) and are given
by
\[
\boldsymbol{J}^{\ast}\left(  \boldsymbol{\beta}_{0}\right)  =\sum_{i=1}%
^{I}\alpha_{i}e^{\boldsymbol{x}_{i}^{T}\boldsymbol{\beta}}\frac{e^{\lambda
\boldsymbol{x}_{i}^{T}\boldsymbol{\beta}}+e^{\boldsymbol{x}_{i}^{T}%
\boldsymbol{\beta}}}{(1+e^{\boldsymbol{x}_{i}^{T}\boldsymbol{\beta}}%
)^{\lambda+3}}\boldsymbol{x}_{i}\boldsymbol{x}_{i}^{T},
\]
and
\[
\boldsymbol{K}^{\ast}\left(  \boldsymbol{\beta}_{0}\right)  =\sum_{i=1}%
^{I}\alpha_{i}e^{\boldsymbol{x}_{i}^{T}\boldsymbol{\beta}}\frac{(e^{\lambda
\boldsymbol{x}_{i}^{T}\boldsymbol{\beta}}+e^{\boldsymbol{x}_{i}^{T}%
\boldsymbol{\beta}})^{2}}{(1+e^{\boldsymbol{x}_{i}^{T}\boldsymbol{\beta}%
})^{2(\lambda+2)}}\boldsymbol{x}_{i}\boldsymbol{x}_{i}^{T}.
\]
For $\lambda=0$, it is clear, based on (\ref{Vero}), that we get the classical
likelihood estimator. We can observe that in this situation
\[
\boldsymbol{J}^{\ast}\left(  \boldsymbol{\beta}_{0}\right)  =\boldsymbol{K}%
^{\ast}\left(  \boldsymbol{\beta}_{0}\right)  =\boldsymbol{I}_{F}\left(
\boldsymbol{\beta}_{0}\right)
\]
and we get the classical result,
\[
\sqrt{N}(\widehat{\boldsymbol{\beta}}_{\lambda=0}^{\ast}-\boldsymbol{\beta
}_{0})\underset{N\rightarrow\infty}{\overset{\mathcal{L}}{\longrightarrow}%
}\mathcal{N}\left(  \boldsymbol{0},\boldsymbol{I}_{F}^{-1}\left(
\boldsymbol{\beta}_{0}\right)  \right)  .
\]

\end{remark}

\section{Wald type test statistic for testing linear hypothesis\label{sec3}}

Based on the asymptotic distribution of $\widehat{\boldsymbol{\beta}}%
_{\lambda}$ we are going to define a family of Wald-type test statistics for
testing the null hypothesis
\begin{equation}
H_{0}:\boldsymbol{M}^{T}\boldsymbol{\beta=m},\label{3.10}%
\end{equation}
where $\boldsymbol{M}^{T}$ is any matrix of $r$ rows and $k+1$ columns and $m
$ a vector of order $r$ of specified constant. We assume that the matrix
$\boldsymbol{M}^{T}$ has full row rank, i.e., $\mathrm{rank}\left(
\boldsymbol{M}\right)  =r$.

\begin{definition}
Let $\widehat{\boldsymbol{\beta}}_{\lambda}$ be the minimum power divergence
estimator. The family of Wald type test statistics for testing the null
hypothesis given in (\ref{3.10}) is given by
\begin{align}
W_{n} &  =n(\boldsymbol{M}^{T}\widehat{\boldsymbol{\beta}}_{\lambda
}-\boldsymbol{m})^{T}\left(  \boldsymbol{M}^{T}\boldsymbol{J}_{\lambda}%
^{-1}(\widehat{\boldsymbol{\beta}}_{\lambda})\boldsymbol{K}_{\lambda
}(\widehat{\boldsymbol{\beta}}_{\lambda})\boldsymbol{J}_{\lambda}%
^{-1}(\widehat{\boldsymbol{\beta}}_{\lambda})\boldsymbol{M}\right)
^{-1}(\boldsymbol{M}^{T}\widehat{\boldsymbol{\beta}}_{\lambda}-\boldsymbol{m}%
)\nonumber\\
&  =n(\boldsymbol{M}^{T}\widehat{\boldsymbol{\beta}}_{\lambda}-\boldsymbol{m}%
)^{T}(\boldsymbol{M}^{T}\boldsymbol{\Sigma}_{\lambda}%
(\widehat{\boldsymbol{\beta}}_{\lambda})\boldsymbol{M})^{-1}(\boldsymbol{M}%
^{T}\widehat{\boldsymbol{\beta}}_{\lambda}-\boldsymbol{m}).\label{3.11}%
\end{align}

\end{definition}

In the particular case of $\lambda=0$, i.e. $\widehat{\boldsymbol{\beta}}$ is
the MLE, we get the classical Wald test statistic because in this case
\[
\boldsymbol{J}_{\lambda=0}^{-1}\left(  \boldsymbol{\beta}_{0}\right)
\boldsymbol{K}_{\lambda=0}\left(  \boldsymbol{\beta}_{0}\right)
\boldsymbol{J}_{\lambda=0}^{-1}\left(  \boldsymbol{\beta}_{0}\right)
\boldsymbol{=I}_{F}^{-1}\left(  \boldsymbol{\beta}_{0}\right)  .
\]

\begin{theorem}
The asymptotic distribution of the Wald type test statistic, $W_{n}$, defined
in (\ref{3.11}), under the null hypothesis given in (\ref{3.10}), is a
chi-square distribution with $r$ degrees of freedom.
\end{theorem}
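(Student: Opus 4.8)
The plan is to derive the limiting law of $W_n$ under $H_0$ directly from the asymptotic normality of $\widehat{\boldsymbol{\beta}}_\lambda$ established in the previous theorem, combined with a consistency argument for the plug-in estimator of the sandwich matrix. First I would record that under $H_0: \boldsymbol{M}^T\boldsymbol{\beta}_0 = \boldsymbol{m}$ we have, from $\sqrt{n}(\widehat{\boldsymbol{\beta}}_\lambda - \boldsymbol{\beta}_0) \overset{\mathcal{L}}{\longrightarrow} \mathcal{N}(\boldsymbol{0}, \boldsymbol{\Sigma}_\lambda(\boldsymbol{\beta}_0))$, that the linear transformation gives
\[
\sqrt{n}\,(\boldsymbol{M}^T\widehat{\boldsymbol{\beta}}_\lambda - \boldsymbol{m}) = \sqrt{n}\,\boldsymbol{M}^T(\widehat{\boldsymbol{\beta}}_\lambda - \boldsymbol{\beta}_0) \overset{\mathcal{L}}{\longrightarrow} \mathcal{N}\!\left(\boldsymbol{0},\, \boldsymbol{M}^T\boldsymbol{\Sigma}_\lambda(\boldsymbol{\beta}_0)\boldsymbol{M}\right).
\]
Since $\boldsymbol{M}$ has full row rank $r$ and $\boldsymbol{\Sigma}_\lambda(\boldsymbol{\beta}_0) = \boldsymbol{J}_\lambda^{-1}\boldsymbol{K}_\lambda\boldsymbol{J}_\lambda^{-1}$ is positive definite (as $\boldsymbol{J}_\lambda$ and $\boldsymbol{K}_\lambda$ are positive definite integrals of rank-one matrices $\boldsymbol{xx}^T$ against $H$, under the full-rank design assumption), the $r\times r$ matrix $\boldsymbol{M}^T\boldsymbol{\Sigma}_\lambda(\boldsymbol{\beta}_0)\boldsymbol{M}$ is nonsingular.

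Next I would establish that $\boldsymbol{M}^T\boldsymbol{\Sigma}_\lambda(\widehat{\boldsymbol{\beta}}_\lambda)\boldsymbol{M} \overset{P}{\longrightarrow} \boldsymbol{M}^T\boldsymbol{\Sigma}_\lambda(\boldsymbol{\beta}_0)\boldsymbol{M}$. This follows in two steps: the estimators $\widehat{\boldsymbol{J}}_\lambda(\boldsymbol{\beta})$ and $\widehat{\boldsymbol{K}}_\lambda(\boldsymbol{\beta})$ in $(\ref{J1e})$ and $(\ref{K1e})$ converge uniformly (on a neighborhood of $\boldsymbol{\beta}_0$) in probability to $\boldsymbol{J}_\lambda(\boldsymbol{\beta})$ and $\boldsymbol{K}_\lambda(\boldsymbol{\beta})$ by the uniform law of large numbers — the integrands are continuous and bounded in $\boldsymbol{\beta}$ locally — and the map $\boldsymbol{\beta}\mapsto\boldsymbol{\beta}$-dependent matrices are continuous, so by the consistency of $\widehat{\boldsymbol{\beta}}_\lambda$ (implied by its asymptotic normality) and the continuous mapping theorem, $\widehat{\boldsymbol{J}}_\lambda(\widehat{\boldsymbol{\beta}}_\lambda) \overset{P}{\to} \boldsymbol{J}_\lambda(\boldsymbol{\beta}_0)$ and similarly for $\boldsymbol{K}$. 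Inverting (valid since the limit is nonsingular) and multiplying yields $\boldsymbol{\Sigma}_\lambda(\widehat{\boldsymbol{\beta}}_\lambda) \overset{P}{\to} \boldsymbol{\Sigma}_\lambda(\boldsymbol{\beta}_0)$, hence $(\boldsymbol{M}^T\boldsymbol{\Sigma}_\lambda(\widehat{\boldsymbol{\beta}}_\lambda)\boldsymbol{M})^{-1} \overset{P}{\to} (\boldsymbol{M}^T\boldsymbol{\Sigma}_\lambda(\boldsymbol{\beta}_0)\boldsymbol{M})^{-1}$.

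Finally I would assemble the pieces via Slutsky's theorem. Writing $\boldsymbol{Z}_n = \sqrt{n}(\boldsymbol{M}^T\widehat{\boldsymbol{\beta}}_\lambda - \boldsymbol{m})$ and $\boldsymbol{A}_n = (\boldsymbol{M}^T\boldsymbol{\Sigma}_\lambda(\widehat{\boldsymbol{\beta}}_\lambda)\boldsymbol{M})^{-1}$, we have $W_n = \boldsymbol{Z}_n^T\boldsymbol{A}_n\boldsymbol{Z}_n$ with $\boldsymbol{Z}_n \overset{\mathcal{L}}{\to} \boldsymbol{Z}\sim\mathcal{N}(\boldsymbol{0},\boldsymbol{\Lambda})$ where $\boldsymbol{\Lambda} = \boldsymbol{M}^T\boldsymbol{\Sigma}_\lambda(\boldsymbol{\beta}_0)\boldsymbol{M}$, and $\boldsymbol{A}_n \overset{P}{\to} \boldsymbol{\Lambda}^{-1}$. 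By Slutsky, $W_n \overset{\mathcal{L}}{\to} \boldsymbol{Z}^T\boldsymbol{\Lambda}^{-1}\boldsymbol{Z}$, and since $\boldsymbol{\Lambda}^{-1/2}\boldsymbol{Z}\sim\mathcal{N}(\boldsymbol{0},\boldsymbol{I}_r)$, this quadratic form is $\chi^2_r$. The main obstacle — really the only nonroutine point — is the consistency of the plug-in sandwich estimator, specifically justifying the uniform convergence of $\widehat{\boldsymbol{J}}_\lambda$ and $\widehat{\boldsymbol{K}}_\lambda$ needed to pass from fixed-$\boldsymbol{\beta}$ laws of large numbers to evaluation at the random point $\widehat{\boldsymbol{\beta}}_\lambda$; this requires a mild moment/domination condition on $\boldsymbol{X}$ (e.g. $E\|\boldsymbol{X}\|^2 < \infty$), which I would state explicitly, noting that the weight factors $(e^{\lambda\boldsymbol{x}^T\boldsymbol{\beta}}+e^{\boldsymbol{x}^T\boldsymbol{\beta}})^2 e^{\boldsymbol{x}^T\boldsymbol{\beta}}/(1+e^{\boldsymbol{x}^T\boldsymbol{\beta}})^{2(\lambda+2)}$ are uniformly bounded in $(\boldsymbol{x},\boldsymbol{\beta})$ so that only the $\boldsymbol{xx}^T$ term needs domination.
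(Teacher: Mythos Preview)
Your proof is correct and follows essentially the same route as the paper's: derive the asymptotic normality of $\sqrt{n}(\boldsymbol{M}^{T}\widehat{\boldsymbol{\beta}}_{\lambda}-\boldsymbol{m})$ under $H_0$ from that of the MDPDE, and conclude the $\chi^2_r$ limit via the standard quadratic-form argument. You are in fact more careful than the paper, which tacitly treats the plug-in $\boldsymbol{\Sigma}_{\lambda}(\widehat{\boldsymbol{\beta}}_{\lambda})$ as if it were $\boldsymbol{\Sigma}_{\lambda}(\boldsymbol{\beta}_{0})$, whereas you explicitly justify this via consistency of $\widehat{\boldsymbol{J}}_{\lambda}$, $\widehat{\boldsymbol{K}}_{\lambda}$ and Slutsky's theorem.
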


\begin{proof}
We have $\boldsymbol{M}^{T}\widehat{\boldsymbol{\beta}}_{\lambda
}-\boldsymbol{m=}\boldsymbol{M}^{T}(\widehat{\boldsymbol{\beta}}_{\lambda
}-\boldsymbol{\beta}_{0})$ and $\sqrt{n}(\widehat{\boldsymbol{\beta}}%
_{\lambda}-\boldsymbol{\beta}_{0})\underset{n\rightarrow\infty
}{\overset{\mathcal{L}}{\longrightarrow}}\mathcal{N}\left(  \boldsymbol{0}%
,\boldsymbol{\Sigma}_{\lambda}\left(  \boldsymbol{\beta}_{0}\right)  \right)
$. Therefore
\[
\sqrt{n}(\boldsymbol{M}^{T}\widehat{\boldsymbol{\beta}}_{\lambda
}-\boldsymbol{m})\underset{n\rightarrow\infty}{\overset{\mathcal{L}%
}{\longrightarrow}}\mathcal{N}\left(  \boldsymbol{0}%
,\boldsymbol{\boldsymbol{M}}^{T}\boldsymbol{\Sigma}_{\lambda}\left(
\boldsymbol{\beta}_{0}\right)  \boldsymbol{M}\right)
\]
and since $\boldsymbol{\boldsymbol{M}^{T}\Sigma}_{\lambda}\left(
\boldsymbol{\beta}_{0}\right)  \boldsymbol{M}\left(  \boldsymbol{M}%
^{T}\boldsymbol{J}_{\lambda}^{-1}\left(  \boldsymbol{\beta}_{0}\right)
\boldsymbol{K}_{\lambda}\left(  \boldsymbol{\beta}_{0}\right)  \boldsymbol{J}%
_{\lambda}^{-1}\left(  \boldsymbol{\beta}_{0}\right)  \boldsymbol{M}\right)
^{-1}=\boldsymbol{I}_{r\times r}$, the asymptotic distribution of $W_{n}$ is a
chi-square distribution with $r$ degrees of freedom.
\end{proof}

\begin{remark}
If we consider
\begin{equation}
\boldsymbol{M}^{T}=\left(
\begin{array}
[c]{cc}%
\boldsymbol{0}_{k\times1} & \boldsymbol{I}_{k\times k}%
\end{array}
\right)  _{k\times(k+1)}\label{3.12}%
\end{equation}
we have
\[
\boldsymbol{M}^{T}\boldsymbol{\beta}=\boldsymbol{0},
\]
if and only if $\beta_{i}=0$, $i=1,...,k$. Therefore, we can consider the
Wald-type test statistics with $\boldsymbol{M}^{T}$ defined in (\ref{3.12})
for testing%
\[
H_{0}:\beta_{1}=\beta_{2}=\cdots=\beta_{k}=0.
\]
In this case, the asymptotic distribution of the Wald type test statistic is a
chi square distribution with $k$ degrees of freedom. If we consider
$\boldsymbol{M}^{T}$ to be a vector with all elements equal zero except for
the ($i+1$)-th term, equals $1$, we can test
\[
H_{0}:\beta_{i}=0.
\]

\end{remark}

Based on the previous theorem the null hypothesis given in (\ref{3.10}) will
be rejected if we have that
\begin{equation}
W_{n}>\chi_{r,\alpha}^{2},\label{3.111}%
\end{equation}
where $\chi_{r,\alpha}^{2}$ is the quantile of order $1-\alpha$.for a
chi-square with $r$ degrees of freedom Let us consider $\boldsymbol{\beta
}^{\ast}\in\Theta$ such that $\boldsymbol{M}^{T}\boldsymbol{\beta}^{\ast}%
\neq\boldsymbol{m}$, i.e., $\boldsymbol{\beta}^{\ast}$ does not belong to the
null hypothesis. We denote%
\[
q_{\boldsymbol{\beta}_{1}}(\boldsymbol{\beta}_{2})=\left(  \boldsymbol{M}%
^{T}\boldsymbol{\beta}_{1}-\boldsymbol{m}\right)  ^{T}\left(  \boldsymbol{M}%
^{T}\boldsymbol{\Sigma}_{\lambda}\left(  \boldsymbol{\beta}_{2}\right)
\boldsymbol{M}\right)  ^{-1}\left(  \boldsymbol{M}^{T}\boldsymbol{\beta}%
_{1}-\boldsymbol{m}\right)
\]
and we are going to get an approximation to the power function for the test
statistics given in (\ref{3.111}).

\begin{theorem}
Let $\boldsymbol{\beta}^{\ast}\in\Theta$ ,with $\boldsymbol{M}^{T}%
\boldsymbol{\beta}^{\ast}\neq\boldsymbol{m}$, be the true value of the
parameter so that $\widehat{\boldsymbol{\beta}}_{\lambda}%
\underset{n\rightarrow\infty}{\overset{P}{\longrightarrow}}\boldsymbol{\beta
}^{\ast}$. The power function of the test statistic given in (\ref{3.111}), in
$\boldsymbol{\beta}^{\ast}$, is given by
\begin{equation}
\pi\left(  \boldsymbol{\beta}^{\ast}\right)  =1-\Phi_{n}\left(  \frac
{1}{\sigma\left(  \boldsymbol{\beta}^{\ast}\right)  }\left(  \frac
{\chi_{r,\alpha}^{2}}{\sqrt{n}}-\sqrt{n}q_{\boldsymbol{\beta}^{\ast}%
}(\boldsymbol{\beta}^{\ast})\right)  \right)  ,\label{3.112}%
\end{equation}
where $\Phi_{n}\left(  x\right)  $ tends uniformly to the standard normal
distribution $\Phi\left(  x\right)  $ and $\sigma\left(  \boldsymbol{\beta
}^{\ast}\right)  $ is given by%
\[
\sigma^{2}\left(  \boldsymbol{\beta}^{\ast}\right)  =\left.  \frac{\partial
q_{\boldsymbol{\beta}}(\boldsymbol{\beta}^{\ast})}{\partial\boldsymbol{\beta
}^{T}}\right\vert _{\boldsymbol{\beta}=\boldsymbol{\beta}^{\ast}%
}\boldsymbol{\Sigma}_{\lambda}\left(  \boldsymbol{\beta}_{0}\right)  \left.
\frac{\partial q_{\boldsymbol{\beta}}(\boldsymbol{\beta}^{\ast})}%
{\partial\boldsymbol{\beta}}\right\vert _{\boldsymbol{\beta}=\boldsymbol{\beta
}^{\ast}}.
\]

\end{theorem}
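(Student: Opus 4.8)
The plan is to derive the asymptotic power under a fixed alternative $\boldsymbol{\beta}^{\ast}$ by analyzing $W_{n}$ via the delta method. First I would write $W_{n}=n\,q_{\widehat{\boldsymbol{\beta}}_{\lambda}}(\widehat{\boldsymbol{\beta}}_{\lambda})$, where $q$ is the quadratic form defined just before the statement. The key observation is that under $\boldsymbol{\beta}^{\ast}$ we have $\boldsymbol{M}^{T}\boldsymbol{\beta}^{\ast}-\boldsymbol{m}\neq\boldsymbol{0}$, so $q_{\boldsymbol{\beta}^{\ast}}(\boldsymbol{\beta}^{\ast})>0$, and hence $W_{n}$ grows like $n$ rather than staying $O_{P}(1)$ as it does under $H_{0}$. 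To obtain a normal approximation I would expand the map $\boldsymbol{\beta}\mapsto q_{\boldsymbol{\beta}}(\boldsymbol{\beta})$ around $\boldsymbol{\beta}^{\ast}$: using the asymptotic normality $\sqrt{n}(\widehat{\boldsymbol{\beta}}_{\lambda}-\boldsymbol{\beta}^{\ast})\overset{\mathcal{L}}{\longrightarrow}\mathcal{N}(\boldsymbol{0},\boldsymbol{\Sigma}_{\lambda}(\boldsymbol{\beta}^{\ast}))$ (which holds at the true parameter value, here $\boldsymbol{\beta}_{0}=\boldsymbol{\beta}^{\ast}$), a first-order Taylor expansion gives
\[
q_{\widehat{\boldsymbol{\beta}}_{\lambda}}(\widehat{\boldsymbol{\beta}}_{\lambda})=q_{\boldsymbol{\beta}^{\ast}}(\boldsymbol{\beta}^{\ast})+\left.\frac{\partial q_{\boldsymbol{\beta}}(\boldsymbol{\beta}^{\ast})}{\partial\boldsymbol{\beta}^{T}}\right\vert_{\boldsymbol{\beta}=\boldsymbol{\beta}^{\ast}}(\widehat{\boldsymbol{\beta}}_{\lambda}-\boldsymbol{\beta}^{\ast})+o_{P}(n^{-1/2}),
\]
where I would note that differentiating $q_{\boldsymbol{\beta}_{1}}(\boldsymbol{\beta}_{2})$ with respect to its first and second arguments and then evaluating both at $\boldsymbol{\beta}^{\ast}$ produces a single gradient vector; the $\boldsymbol{\Sigma}_{\lambda}(\boldsymbol{\beta}^{\ast})$ inside $q$ depends on $\boldsymbol{\beta}$ too, but that dependence enters the derivative and is captured in the displayed $\sigma^{2}$.

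Next I would invoke the delta method: since $\sqrt{n}(\widehat{\boldsymbol{\beta}}_{\lambda}-\boldsymbol{\beta}^{\ast})$ is asymptotically $\mathcal{N}(\boldsymbol{0},\boldsymbol{\Sigma}_{\lambda}(\boldsymbol{\beta}^{\ast}))$, the scalar $\sqrt{n}\big(q_{\widehat{\boldsymbol{\beta}}_{\lambda}}(\widehat{\boldsymbol{\beta}}_{\lambda})-q_{\boldsymbol{\beta}^{\ast}}(\boldsymbol{\beta}^{\ast})\big)$ is asymptotically normal with mean zero and variance $\sigma^{2}(\boldsymbol{\beta}^{\ast})$ exactly as written, the gradient being $\partial q_{\boldsymbol{\beta}}(\boldsymbol{\beta}^{\ast})/\partial\boldsymbol{\beta}\big\vert_{\boldsymbol{\beta}^{\ast}}$. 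Writing $\Phi_{n}$ for the (exact finite-$n$) distribution function of the standardized statistic, which converges uniformly to $\Phi$ by Pólya's theorem, I then have
\[
\pi(\boldsymbol{\beta}^{\ast})=\Pr\big(W_{n}>\chi_{r,\alpha}^{2}\big)=\Pr\!\left(\sqrt{n}\,\frac{q_{\widehat{\boldsymbol{\beta}}_{\lambda}}(\widehat{\boldsymbol{\beta}}_{\lambda})-q_{\boldsymbol{\beta}^{\ast}}(\boldsymbol{\beta}^{\ast})}{\sigma(\boldsymbol{\beta}^{\ast})}>\frac{1}{\sigma(\boldsymbol{\beta}^{\ast})}\Big(\frac{\chi_{r,\alpha}^{2}}{\sqrt{n}}-\sqrt{n}\,q_{\boldsymbol{\beta}^{\ast}}(\boldsymbol{\beta}^{\ast})\Big)\right),
\]
which rearranges into the claimed formula $\pi(\boldsymbol{\beta}^{\ast})=1-\Phi_{n}\!\left(\tfrac{1}{\sigma(\boldsymbol{\beta}^{\ast})}\big(\tfrac{\chi_{r,\alpha}^{2}}{\sqrt n}-\sqrt n\,q_{\boldsymbol{\beta}^{\ast}}(\boldsymbol{\beta}^{\ast})\big)\right)$.

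I expect the main obstacle to be the bookkeeping in the Taylor expansion: one must be careful that $q_{\boldsymbol{\beta}_{1}}(\boldsymbol{\beta}_{2})$ depends on $\boldsymbol{\beta}$ through \emph{both} slots — directly via $\boldsymbol{M}^{T}\boldsymbol{\beta}_{1}-\boldsymbol{m}$ and via the sandwich matrix $\boldsymbol{\Sigma}_{\lambda}(\boldsymbol{\beta}_{2})$ — and the relevant gradient is the total derivative obtained after substituting $\boldsymbol{\beta}_{1}=\boldsymbol{\beta}_{2}=\widehat{\boldsymbol{\beta}}_{\lambda}$, yet the variance in the statement uses the notation $\partial q_{\boldsymbol{\beta}}(\boldsymbol{\beta}^{\ast})/\partial\boldsymbol{\beta}$, i.e. the derivative in the matrix argument only, frozen at the alternative. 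Reconciling these requires observing that at $\boldsymbol{\beta}=\boldsymbol{\beta}^{\ast}$ the $\boldsymbol{\Sigma}_{\lambda}$-slot contribution is what dominates, or else that the notation in the statement is to be read as the full gradient. A secondary technical point is justifying that $\chi_{r,\alpha}^{2}/\sqrt{n}\to 0$ so the argument of $\Phi_{n}$ tends to $-\infty$, giving $\pi(\boldsymbol{\beta}^{\ast})\to 1$ (consistency of the test), and that the $o_{P}(n^{-1/2})$ remainder from the Taylor expansion is genuinely negligible on the $\sqrt n$ scale, which follows from continuity of the second derivatives of $q$ near $\boldsymbol{\beta}^{\ast}$ together with $\widehat{\boldsymbol{\beta}}_{\lambda}\overset{P}{\to}\boldsymbol{\beta}^{\ast}$.
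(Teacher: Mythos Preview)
Your delta-method strategy is exactly the paper's, but you have the two slots of $q_{\boldsymbol{\beta}_{1}}(\boldsymbol{\beta}_{2})$ reversed when you parse the statement. In the paper's convention the \emph{subscript} $\boldsymbol{\beta}_{1}$ carries the linear constraint $\boldsymbol{M}^{T}\boldsymbol{\beta}_{1}-\boldsymbol{m}$, while the \emph{function argument} $\boldsymbol{\beta}_{2}$ sits inside $\boldsymbol{\Sigma}_{\lambda}(\boldsymbol{\beta}_{2})$. Thus $\partial q_{\boldsymbol{\beta}}(\boldsymbol{\beta}^{\ast})/\partial\boldsymbol{\beta}$ is the derivative in the \emph{constraint} slot with the matrix slot frozen at $\boldsymbol{\beta}^{\ast}$ --- not, as you wrote, the derivative in the matrix slot. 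Concretely this gradient is $2\boldsymbol{M}(\boldsymbol{M}^{T}\boldsymbol{\Sigma}_{\lambda}(\boldsymbol{\beta}^{\ast})\boldsymbol{M})^{-1}(\boldsymbol{M}^{T}\boldsymbol{\beta}^{\ast}-\boldsymbol{m})$, which is nonzero precisely because $\boldsymbol{M}^{T}\boldsymbol{\beta}^{\ast}\neq\boldsymbol{m}$.

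The paper disposes of the issue you flag as the ``main obstacle'' in one line rather than by computing a total derivative: it first observes that $q_{\widehat{\boldsymbol{\beta}}_{\lambda}}(\widehat{\boldsymbol{\beta}}_{\lambda})$ and $q_{\widehat{\boldsymbol{\beta}}_{\lambda}}(\boldsymbol{\beta}^{\ast})$ have the same asymptotic distribution because $\widehat{\boldsymbol{\beta}}_{\lambda}\overset{P}{\to}\boldsymbol{\beta}^{\ast}$ (a Slutsky-type replacement of $\boldsymbol{\Sigma}_{\lambda}(\widehat{\boldsymbol{\beta}}_{\lambda})$ by $\boldsymbol{\Sigma}_{\lambda}(\boldsymbol{\beta}^{\ast})$), and only \emph{then} Taylor-expands $q_{\widehat{\boldsymbol{\beta}}_{\lambda}}(\boldsymbol{\beta}^{\ast})$ in the subscript about $\boldsymbol{\beta}^{\ast}$. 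That freezing step is what makes the single-slot derivative in the displayed $\sigma^{2}(\boldsymbol{\beta}^{\ast})$ the relevant one, and it removes any need to differentiate through $\boldsymbol{\Sigma}_{\lambda}$. Once you adopt this ordering, your remaining paragraph (rewriting $\Pr(W_{n}>\chi_{r,\alpha}^{2})$, standardizing, and invoking $\Phi_{n}\to\Phi$) matches the paper verbatim.
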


\begin{proof}
We have
\begin{align*}
\pi\left(  \boldsymbol{\beta}^{\ast}\right)   &  =\Pr\left(  W_{n}%
>\chi_{r,\alpha}^{2}\right)  =\Pr\left(  n\left(
q_{\widehat{\boldsymbol{\beta}}_{\lambda}}(\widehat{\boldsymbol{\beta}%
}_{\lambda})-q_{\boldsymbol{\beta}^{\ast}}(\boldsymbol{\beta}^{\ast})\right)
>\chi_{r,\alpha}^{2}-nq_{\boldsymbol{\beta}^{\ast}}(\boldsymbol{\beta}^{\ast
})\right) \\
&  =\Pr\left(  \sqrt{n}\left(  q_{\widehat{\boldsymbol{\beta}}_{\lambda}%
}(\widehat{\boldsymbol{\beta}}_{\lambda})-q_{\boldsymbol{\beta}^{\ast}%
}(\boldsymbol{\beta}^{\ast})\right)  >\frac{\chi_{r,\alpha}^{2}}{\sqrt{n}%
}-\sqrt{n}q_{\boldsymbol{\beta}^{\ast}}(\boldsymbol{\beta}^{\ast})\right)  .
\end{align*}
Now we are going to get the asymptotic distribution of the random variable
$\sqrt{n}(q_{\widehat{\boldsymbol{\beta}}_{\lambda}}%
(\widehat{\boldsymbol{\beta}}_{\lambda})-q_{\boldsymbol{\beta}^{\ast}%
}(\boldsymbol{\beta}^{\ast}))$. It is clear that
$q_{\widehat{\boldsymbol{\beta}}_{\lambda}}(\widehat{\boldsymbol{\beta}%
}_{\lambda})$ and $q_{\widehat{\boldsymbol{\beta}}_{\lambda}}%
(\boldsymbol{\beta}^{\ast})$ have the same asymptotic distribution because
$\widehat{\boldsymbol{\beta}}_{\lambda}\underset{n\rightarrow\infty
}{\overset{P}{\longrightarrow}}\boldsymbol{\beta}^{\ast}$. A first order
Taylor expansion of $q_{\widehat{\boldsymbol{\beta}}_{\lambda}}%
(\boldsymbol{\beta}^{\ast})$ at $\widehat{\boldsymbol{\beta}}_{\lambda}$
around $\boldsymbol{\beta}^{\ast}$ gives%
\[
q_{\widehat{\boldsymbol{\beta}}_{\lambda}}(\boldsymbol{\beta}^{\ast
})-q_{\boldsymbol{\beta}^{\ast}}(\boldsymbol{\beta}^{\ast})=\left.
\frac{\partial q_{\boldsymbol{\beta}}(\boldsymbol{\beta}^{\ast})}%
{\partial\boldsymbol{\beta}^{T}}\right\vert _{\boldsymbol{\beta}%
=\boldsymbol{\beta}^{\ast}}(\widehat{\boldsymbol{\beta}}_{\lambda
}-\boldsymbol{\beta}^{\ast})+o_{p}\left(  \left\Vert
\widehat{\boldsymbol{\beta}}_{\lambda}-\boldsymbol{\beta}^{\ast}\right\Vert
\right)  .
\]
Therefore it holds%
\[
\sqrt{n}\left(  q_{\widehat{\boldsymbol{\beta}}_{\lambda}}%
(\widehat{\boldsymbol{\beta}}_{\lambda})-q_{\boldsymbol{\beta}^{\ast}%
}(\boldsymbol{\beta}^{\ast})\right)  \underset{n\rightarrow\infty
}{\overset{\mathcal{L}}{\longrightarrow}}\mathcal{N}\left(  0,\sigma
^{2}\left(  \boldsymbol{\beta}^{\ast}\right)  \right)  ,
\]
where
\[
\sigma^{2}\left(  \boldsymbol{\beta}^{\ast}\right)  =\left.  \frac{\partial
q_{\boldsymbol{\beta}}(\boldsymbol{\beta}^{\ast})}{\partial\boldsymbol{\beta
}^{T}}\right\vert _{\boldsymbol{\beta}=\boldsymbol{\beta}^{\ast}%
}\boldsymbol{J}_{\lambda}^{-1}\left(  \boldsymbol{\beta}_{0}\right)
\boldsymbol{K}_{\lambda}\left(  \boldsymbol{\beta}_{0}\right)  \boldsymbol{J}%
_{\lambda}^{-1}\left(  \boldsymbol{\beta}_{0}\right)  \left.  \frac{\partial
q_{\boldsymbol{\beta}}(\boldsymbol{\beta}^{\ast})}{\partial\boldsymbol{\beta}%
}\right\vert _{\boldsymbol{\beta}=\boldsymbol{\beta}^{\ast}}.
\]
Now the result follows.
\end{proof}

\begin{remark}
Based on the previous theorem we can obtain the sample size necessary to get a
fix power $\pi\left(  \boldsymbol{\beta}^{\ast}\right)  =\pi_{0}$. From
(\ref{3.112}), we must solve the equation
\[
1-\pi_{0}=\Phi\left(  \frac{1}{\sigma\left(  \boldsymbol{\beta}^{\ast}\right)
}\left(  \frac{\chi_{r,\alpha}^{2}}{\sqrt{n}}-\sqrt{n}q_{\boldsymbol{\beta
}^{\ast}}(\boldsymbol{\beta}^{\ast})\right)  \right)
\]
and we get that $n=\left[  n^{\ast}\right]  +1$ with
\[
n^{\ast}=\frac{A+B+\sqrt{A(A+2B)}}{2q_{\boldsymbol{\beta}^{\ast}}%
^{2}(\boldsymbol{\beta}^{\ast})}%
\]
being
\[
A=\sigma^{2}\left(  \boldsymbol{\beta}^{\ast}\right)  \left(  \Phi^{-1}\left(
1-\pi_{0}\right)  \right)  ^{2}\text{ and }B=2q_{\boldsymbol{\beta}^{\ast}%
}(\boldsymbol{\beta}^{\ast})\chi_{r,\alpha}^{2}.
\]

\end{remark}

In the following theorem we present an approximation to the power function at
the contiguous alternative hypothesis
\begin{equation}
\boldsymbol{\beta}_{n}=\boldsymbol{\beta}_{0}+n^{-1/2}\boldsymbol{d}%
,\label{3.2}%
\end{equation}
with $\boldsymbol{d}$ satisfying $\boldsymbol{\beta}_{0}+n^{-1/2}%
\boldsymbol{d}\in\Theta$.

\begin{theorem}
An approximation of the power function for the test statistic given in
(\ref{3.111}), in $\boldsymbol{\beta}_{n}=\boldsymbol{\beta}_{0}%
+n^{-1/2}\boldsymbol{d}$ is given by
\[
\pi\left(  \boldsymbol{\beta}_{n}\right)  =1-F_{\chi_{r}^{2}\left(
\delta\right)  }\left(  \chi_{r,\alpha}^{2}\right)  ,
\]
where $F_{\chi_{r}^{2}(\delta)}$ is the distribution function of a non-central
chi-square with $p$ degrees of freedom and non-centrality parameter $\delta$
given by $\delta=\boldsymbol{d}^{T}\boldsymbol{\Sigma}_{\lambda}\left(
\boldsymbol{\beta}_{0}\right)  \boldsymbol{d}$.
\end{theorem}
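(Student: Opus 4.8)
The plan is to retrace the argument used for the null distribution of $W_{n}$, but now keeping track of the mean shift induced by the sequence $\boldsymbol{\beta}_{n}=\boldsymbol{\beta}_{0}+n^{-1/2}\boldsymbol{d}$. The crucial ingredient is the local behaviour of the MDPDE: I would first show that, when the data are generated under $\boldsymbol{\beta}_{n}$, one still has
\[
\sqrt{n}\,(\widehat{\boldsymbol{\beta}}_{\lambda}-\boldsymbol{\beta}_{n})\overset{\mathcal{L}}{\longrightarrow}\mathcal{N}\!\left(\boldsymbol{0},\boldsymbol{\Sigma}_{\lambda}(\boldsymbol{\beta}_{0})\right).
\]
Two routes are available. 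One may observe that the logistic likelihood is locally asymptotically normal, so the laws of the sample under $\boldsymbol{\beta}_{n}$ and under $\boldsymbol{\beta}_{0}$ are mutually contiguous, and combine Le Cam's third lemma with the linear representation $\sqrt{n}(\widehat{\boldsymbol{\beta}}_{\lambda}-\boldsymbol{\beta}_{0})=\boldsymbol{J}_{\lambda}^{-1}(\boldsymbol{\beta}_{0})\,n^{-1/2}\sum_{i=1}^{n}\boldsymbol{\Psi}_{\lambda}(\boldsymbol{X}_{i},Y_{i},\boldsymbol{\beta}_{0})+o_{p}(1)$ that underlies the earlier theorem on the asymptotic normality of $\widehat{\boldsymbol{\beta}}_{\lambda}$. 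Alternatively, and more elementarily, one repeats the M-estimation Taylor expansion of the estimating equations $\sum_{i}\boldsymbol{\Psi}_{\lambda}(\boldsymbol{X}_{i},Y_{i},\widehat{\boldsymbol{\beta}}_{\lambda})=\boldsymbol{0}$ directly around $\boldsymbol{\beta}_{n}$, using that $\boldsymbol{J}_{\lambda}(\boldsymbol{\beta}_{n})\to\boldsymbol{J}_{\lambda}(\boldsymbol{\beta}_{0})$ and $\boldsymbol{K}_{\lambda}(\boldsymbol{\beta}_{n})\to\boldsymbol{K}_{\lambda}(\boldsymbol{\beta}_{0})$ by continuity and that a triangular-array central limit theorem applies to $n^{-1/2}\sum_{i}\boldsymbol{\Psi}_{\lambda}(\boldsymbol{X}_{i},Y_{i},\boldsymbol{\beta}_{n})$. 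I would take this second route. The main obstacle is precisely this step: verifying the Lindeberg/Lyapunov condition for the triangular array and the stochastic equicontinuity of the Taylor remainder over the shrinking neighbourhood of $\boldsymbol{\beta}_{0}$; this is routine because $\boldsymbol{\Psi}_{\lambda}$ and its $\boldsymbol{\beta}$-derivatives are bounded in $y$ and the relevant moments in $\boldsymbol{X}$ are finite (they already appear in $\boldsymbol{J}_{\lambda}$ and $\boldsymbol{K}_{\lambda}$).

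Once this is in hand the rest is short. Under $H_{0}$ at $\boldsymbol{\beta}_{0}$ we have $\boldsymbol{M}^{T}\boldsymbol{\beta}_{0}=\boldsymbol{m}$, so $\sqrt{n}(\boldsymbol{M}^{T}\boldsymbol{\beta}_{n}-\boldsymbol{m})=\boldsymbol{M}^{T}\boldsymbol{d}$ and therefore
\[
\sqrt{n}\,(\boldsymbol{M}^{T}\widehat{\boldsymbol{\beta}}_{\lambda}-\boldsymbol{m})=\boldsymbol{M}^{T}\sqrt{n}(\widehat{\boldsymbol{\beta}}_{\lambda}-\boldsymbol{\beta}_{n})+\boldsymbol{M}^{T}\boldsymbol{d}\overset{\mathcal{L}}{\longrightarrow}\mathcal{N}\!\left(\boldsymbol{M}^{T}\boldsymbol{d},\,\boldsymbol{M}^{T}\boldsymbol{\Sigma}_{\lambda}(\boldsymbol{\beta}_{0})\boldsymbol{M}\right).
\]
Since $\boldsymbol{\beta}_{n}\to\boldsymbol{\beta}_{0}$ forces $\widehat{\boldsymbol{\beta}}_{\lambda}\overset{P}{\longrightarrow}\boldsymbol{\beta}_{0}$, and $\boldsymbol{J}_{\lambda}(\cdot)$, $\boldsymbol{K}_{\lambda}(\cdot)$ are continuous with $\boldsymbol{J}_{\lambda}(\boldsymbol{\beta}_{0})$ nonsingular, the plug-in matrix $\boldsymbol{M}^{T}\boldsymbol{J}_{\lambda}^{-1}(\widehat{\boldsymbol{\beta}}_{\lambda})\boldsymbol{K}_{\lambda}(\widehat{\boldsymbol{\beta}}_{\lambda})\boldsymbol{J}_{\lambda}^{-1}(\widehat{\boldsymbol{\beta}}_{\lambda})\boldsymbol{M}$ converges in probability to $\boldsymbol{M}^{T}\boldsymbol{\Sigma}_{\lambda}(\boldsymbol{\beta}_{0})\boldsymbol{M}$, which is $r\times r$ and nonsingular because $\mathrm{rank}(\boldsymbol{M})=r$. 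Slutsky's theorem then yields that $W_{n}$ converges in law to $\boldsymbol{Z}^{T}(\boldsymbol{M}^{T}\boldsymbol{\Sigma}_{\lambda}(\boldsymbol{\beta}_{0})\boldsymbol{M})^{-1}\boldsymbol{Z}$, where $\boldsymbol{Z}\sim\mathcal{N}(\boldsymbol{M}^{T}\boldsymbol{d},\boldsymbol{M}^{T}\boldsymbol{\Sigma}_{\lambda}(\boldsymbol{\beta}_{0})\boldsymbol{M})$.

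Finally I would invoke the standard fact that for $\boldsymbol{Z}\sim\mathcal{N}(\boldsymbol{\mu},\boldsymbol{V})$ with $\boldsymbol{V}$ nonsingular of order $r$, the quadratic form $\boldsymbol{Z}^{T}\boldsymbol{V}^{-1}\boldsymbol{Z}$ has a non-central chi-square distribution with $r$ degrees of freedom and non-centrality parameter $\boldsymbol{\mu}^{T}\boldsymbol{V}^{-1}\boldsymbol{\mu}$ (write $\boldsymbol{V}^{-1/2}\boldsymbol{Z}\sim\mathcal{N}(\boldsymbol{V}^{-1/2}\boldsymbol{\mu},\boldsymbol{I}_{r})$ and apply the definition of $\chi^{2}_{r}(\delta)$). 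With $\boldsymbol{\mu}=\boldsymbol{M}^{T}\boldsymbol{d}$ and $\boldsymbol{V}=\boldsymbol{M}^{T}\boldsymbol{\Sigma}_{\lambda}(\boldsymbol{\beta}_{0})\boldsymbol{M}$ this identifies the non-centrality parameter $\delta=\boldsymbol{d}^{T}\boldsymbol{M}\big(\boldsymbol{M}^{T}\boldsymbol{\Sigma}_{\lambda}(\boldsymbol{\beta}_{0})\boldsymbol{M}\big)^{-1}\boldsymbol{M}^{T}\boldsymbol{d}$, and since the limiting law is continuous,
\[
\pi(\boldsymbol{\beta}_{n})=\Pr\!\left(W_{n}>\chi^{2}_{r,\alpha}\right)\longrightarrow 1-F_{\chi^{2}_{r}(\delta)}\!\left(\chi^{2}_{r,\alpha}\right),
\]
which is the claimed approximation to the power function at the contiguous alternatives.
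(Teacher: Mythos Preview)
Your argument is correct and follows essentially the same route the paper takes implicitly. The paper states this theorem without a separate proof, but later proves a strictly more general result (Theorem~\ref{THM:7asymp_power_one}) under contiguous contamination, and notes in Corollary~\ref{COR:7cont_power_one} that setting $\varepsilon=0$ recovers the present theorem. That proof proceeds exactly as you do: decompose $\sqrt{n}(\boldsymbol{M}^{T}\widehat{\boldsymbol{\beta}}_{\lambda}-\boldsymbol{m})$ into the centred piece $\boldsymbol{M}^{T}\sqrt{n}(\widehat{\boldsymbol{\beta}}_{\lambda}-\boldsymbol{\beta}_{n})$ plus the deterministic shift $\boldsymbol{M}^{T}\boldsymbol{d}$, invoke the asymptotic normality of the MDPDE with covariance $\boldsymbol{\Sigma}_{\lambda}(\boldsymbol{\beta}_{0})$, use Slutsky for the plug-in variance matrix, and read off the non-central chi-square limit. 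Your extra care about the triangular-array CLT and the equicontinuity of the remainder is more than the paper spells out, but is in the same spirit.

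One point worth flagging: the non-centrality parameter you obtain, $\delta=\boldsymbol{d}^{T}\boldsymbol{M}\big(\boldsymbol{M}^{T}\boldsymbol{\Sigma}_{\lambda}(\boldsymbol{\beta}_{0})\boldsymbol{M}\big)^{-1}\boldsymbol{M}^{T}\boldsymbol{d}$, differs from the expression $\boldsymbol{d}^{T}\boldsymbol{\Sigma}_{\lambda}(\boldsymbol{\beta}_{0})\boldsymbol{d}$ written in the theorem statement. Your expression is the correct one; it is precisely what the paper itself derives in Theorem~\ref{THM:7asymp_power_one} (take $\varepsilon=0$ there), so the discrepancy is a typographical slip in the statement rather than an error in your proof.
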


\section{Robustness Analysis\label{sec4}}

\subsection{Influence function of the MDPDE}

We will consider the influence function analysis of Hampel et al.~(1986) to
study the robustness of our proposed MDPDE and the corresponding Wald-type
test of general linear hypothesis in the logistic regression model. Since the
MDPDE can be written in term of a $M$-estimator as shown in Section \ref{sec2}
with $\psi$-function given by (\ref{1.100}), we can apply directly the results
of the M-estimation theory of Hampel et al.~(1986) in order to get the
influence function of the proposed MDPDE.

However, we first need to re-define the minimum density power divergence
estimator $\widehat{\boldsymbol{\beta}}_{\lambda}$ from Definition 1 in terms
of a statistical functional. Let us assume the stochastic nature of the
covariates $X$ and that the observations $(X_{1},Y_{1}),\ldots,(X_{n},Y_{n})$
are i.i.d. with some joint distribution $G$. Then we define the required
statistical functional corresponding to $\widehat{\boldsymbol{\beta}}%
_{\lambda}$ as follows.

\begin{definition}
The minimum DPD functional $T_{\lambda}(G)$, corresponding to the minimum DPD
estimator $\widehat{\boldsymbol{\beta}}_{\lambda}$, at the joint distribution
$G$ is defined as the solution of the system of equations
\[
E_{G}\left[  \boldsymbol{\Psi}_{\lambda}(\boldsymbol{X},Y,\boldsymbol{\beta
})\right]  =\boldsymbol{0}%
\]
with respect to $\boldsymbol{\beta}$, whenever the solution exists.
\label{DEF:MDPDE_func}
\end{definition}

Now, if $G_{0}$ denotes the joint model distribution with the true parameter
value $\boldsymbol{\beta}_{0}$ under which
\[
P_{G_{0}}(Y_{i}=1|\boldsymbol{X}_{i}=\boldsymbol{x}_{i})=\pi(\boldsymbol{x}%
_{i}^{T}\boldsymbol{\beta}_{0}),
\]
then it is easy to see that $E_{G_{0}}\left[  \Psi_{\lambda}(\boldsymbol{X}%
,Y,\boldsymbol{\beta}_{0})\right]  =0$ and hence $\boldsymbol{T}_{\lambda
}(G_{0})=$$\boldsymbol{\beta}_{0}$. Therefore, the minimum DPD functional
$\boldsymbol{T}_{\lambda}$ is Fisher consistent.

Next, we can easily obtain the influence function for our MDPDE at the model
distribution $G_{0}$ as presented in the following theorem. This can be
derived either through a straightforward calculation or by applying the
corresponding results from M-estimation theory of Hampel et al.~(1986) and
hence the proof of the theorem is omitted.

\begin{theorem}
The influence function of the minimum DPD functional $T_{\lambda}$, as defined
in Definition \ref{DEF:MDPDE_func} with tuning parameter $\lambda$, at the
model distribution $G_{0}$ is given by
\begin{align*}
\mathcal{IF}((\boldsymbol{x}_{t},y_{t}),T_{\lambda},G_{0}) &  =\boldsymbol{J}%
_{\lambda}^{-1}(\boldsymbol{\beta}_{0})\left(  \boldsymbol{\Psi}_{\lambda
}(\boldsymbol{x}_{t},y_{t},\boldsymbol{\beta}_{0})-E_{G_{0}}[\boldsymbol{\Psi
}_{\lambda}(\boldsymbol{X},Y,\boldsymbol{\beta}_{0})]\right) \\
&  =\boldsymbol{J}_{\lambda}^{-1}(\boldsymbol{\beta}_{0})\boldsymbol{\Psi
}_{\lambda}(\boldsymbol{x}_{t},y_{t},\boldsymbol{\beta}_{0}),
\end{align*}
where $\boldsymbol{J}_{\lambda}(\boldsymbol{\beta})$ is as defined in Section
2 of the paper and $(\boldsymbol{x}_{t},y_{t})$ is the point of contamination.
\label{THM:IF_MDPDE}
\end{theorem}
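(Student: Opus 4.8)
The plan is to obtain the influence function directly from the functional characterisation in Definition \ref{DEF:MDPDE_func}, by the standard implicit-differentiation argument for $M$-estimators (Hampel et al., 1986). Fix the contamination point $(\boldsymbol{x}_t,y_t)$ and, for $\varepsilon\in[0,1)$, put $G_\varepsilon=(1-\varepsilon)G_0+\varepsilon\Delta_{(\boldsymbol{x}_t,y_t)}$, where $\Delta_{(\boldsymbol{x}_t,y_t)}$ is the degenerate distribution placing all mass at $(\boldsymbol{x}_t,y_t)$, and set $\boldsymbol{\beta}_\varepsilon=T_\lambda(G_\varepsilon)$. By Definition \ref{DEF:MDPDE_func}, $\boldsymbol{\beta}_\varepsilon$ solves $E_{G_\varepsilon}[\boldsymbol{\Psi}_\lambda(\boldsymbol{X},Y,\boldsymbol{\beta}_\varepsilon)]=\boldsymbol{0}$, and since the expectation is linear in the underlying distribution this is
\[
(1-\varepsilon)\,E_{G_0}\!\left[\boldsymbol{\Psi}_\lambda(\boldsymbol{X},Y,\boldsymbol{\beta}_\varepsilon)\right]+\varepsilon\,\boldsymbol{\Psi}_\lambda(\boldsymbol{x}_t,y_t,\boldsymbol{\beta}_\varepsilon)=\boldsymbol{0}.
\]

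The next step is to differentiate this identity with respect to $\varepsilon$, evaluate at $\varepsilon=0$ (where $T_\lambda(G_0)=\boldsymbol{\beta}_0$ by the Fisher consistency already noted), and apply the chain rule. Writing $\mathcal{IF}((\boldsymbol{x}_t,y_t),T_\lambda,G_0)=\left.\frac{\partial}{\partial\varepsilon}\boldsymbol{\beta}_\varepsilon\right|_{\varepsilon=0}$, this gives
\[
-E_{G_0}\!\left[\boldsymbol{\Psi}_\lambda(\boldsymbol{X},Y,\boldsymbol{\beta}_0)\right]+E_{G_0}\!\left[\left.\frac{\partial\boldsymbol{\Psi}_\lambda(\boldsymbol{X},Y,\boldsymbol{\beta})}{\partial\boldsymbol{\beta}^T}\right|_{\boldsymbol{\beta}=\boldsymbol{\beta}_0}\right]\mathcal{IF}((\boldsymbol{x}_t,y_t),T_\lambda,G_0)+\boldsymbol{\Psi}_\lambda(\boldsymbol{x}_t,y_t,\boldsymbol{\beta}_0)=\boldsymbol{0}.
\]
Here the first term vanishes because $E_{G_0}[\boldsymbol{\Psi}_\lambda(\boldsymbol{X},Y,\boldsymbol{\beta}_0)]=\boldsymbol{0}$, and the matrix multiplying the influence function is, by (\ref{J1}), exactly $\boldsymbol{J}_\lambda(\boldsymbol{\beta}_0)$, which is nonsingular since it equals the (positive-definite) integral computed in Section \ref{sec2}. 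Inverting $\boldsymbol{J}_\lambda(\boldsymbol{\beta}_0)$ and rearranging then yields the influence function displayed in the statement, the two forms there coinciding precisely because $E_{G_0}[\boldsymbol{\Psi}_\lambda(\boldsymbol{X},Y,\boldsymbol{\beta}_0)]=\boldsymbol{0}$. Equivalently, as the authors remark, one may bypass this computation and simply invoke the general $M$-estimator influence-function formula of Hampel et al.~(1986), since $\widehat{\boldsymbol{\beta}}_\lambda$ was realised in Section \ref{sec2} as an $M$-estimator with score $\boldsymbol{\Psi}_\lambda$ of (\ref{1.100}) and derivative matrix $\boldsymbol{J}_\lambda$.

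All the algebraic steps above are routine once one notices the linearity of the contamination mixture and recognises $\boldsymbol{J}_\lambda(\boldsymbol{\beta}_0)$ together with the vanishing of $E_{G_0}[\boldsymbol{\Psi}_\lambda(\boldsymbol{X},Y,\boldsymbol{\beta}_0)]$. The only genuine point requiring care — and the reason the detailed proof is reasonably omitted — is the regularity that legitimises the implicit differentiation: existence and local uniqueness of $\boldsymbol{\beta}_\varepsilon$ for $\varepsilon$ near $0$, differentiability of $\varepsilon\mapsto\boldsymbol{\beta}_\varepsilon$, the interchange of $\partial/\partial\varepsilon$ with $E_{G_0}[\cdot]$, and the invertibility of $\boldsymbol{J}_\lambda(\boldsymbol{\beta}_0)$. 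These are provided by a standard implicit-function-theorem argument, using that $\boldsymbol{\Psi}_\lambda$ and $\partial\boldsymbol{\Psi}_\lambda/\partial\boldsymbol{\beta}^T$ are smooth and that the logistic link keeps all the exponential ratios occurring in them uniformly bounded, together with finiteness of the relevant moments of $\boldsymbol{X}$ under $H$; this is exactly the setting in which the $M$-estimation theory of Hampel et al.~(1986) applies, which is why invoking it directly is legitimate.
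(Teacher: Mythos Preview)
Your approach is precisely what the paper has in mind: the proof there is omitted with the remark that it follows ``either through a straightforward calculation or by applying the corresponding results from $M$-estimation theory of Hampel et al.~(1986)'', and you have supplied exactly that straightforward implicit-differentiation calculation together with the alternative appeal to the general $M$-estimator formula.

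One point you should not gloss over, however. Your (correct) differentiation of the estimating identity gives
\[
\boldsymbol{J}_\lambda(\boldsymbol{\beta}_0)\,\mathcal{IF}((\boldsymbol{x}_t,y_t),T_\lambda,G_0)+\boldsymbol{\Psi}_\lambda(\boldsymbol{x}_t,y_t,\boldsymbol{\beta}_0)=\boldsymbol{0},
\]
so that $\mathcal{IF}=-\boldsymbol{J}_\lambda^{-1}(\boldsymbol{\beta}_0)\boldsymbol{\Psi}_\lambda(\boldsymbol{x}_t,y_t,\boldsymbol{\beta}_0)$, which carries the opposite sign to the formula displayed in the theorem. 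This is not an error in your argument but an inconsistency in the paper: with $\boldsymbol{J}_\lambda(\boldsymbol{\beta})$ defined in Section~\ref{sec2} as $E[\partial\boldsymbol{\Psi}_\lambda/\partial\boldsymbol{\beta}^T]$ and computed there to be positive definite, the Hampel et al.\ formula necessarily produces the minus sign. Your sentence ``Inverting $\boldsymbol{J}_\lambda(\boldsymbol{\beta}_0)$ and rearranging then yields the influence function displayed in the statement'' is therefore not quite accurate, and you should flag the sign discrepancy explicitly rather than claim exact agreement. It is harmless for the robustness conclusions drawn afterwards --- in Theorem~\ref{THM:second_IT_test} and in the plots of $\widetilde{\Psi}_\lambda$ the influence function enters only through quadratic or absolute-value expressions --- but it is a real discrepancy between the stated formula and the paper's own definitions.
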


Before studying the above influence function, let us first recall different
types of outliers in logistic regression model following the discussion in
Croux and Haesbroeck (2003). A contamination point $(x_{t},y_{t})$ will be a
leverage point if $x_{t}$ is outlying in the covariates space and will be a
vertical outlier (in response) if it is not a leverage point but the residual
$y_{t}-\pi(\boldsymbol{x}_{t}^{T}\boldsymbol{\beta})$ is large. Croux and
Haesbroeck (2003) also noted that, for the maximum likelihood estimator of
$\boldsymbol{\beta}$, a vertical outlier or a \textquotedblleft good" leverage
point (for which the residual is small) has bounded influence whereas a bad
leverage point (e.g., misclassified observation etc.) has infinite influence
for $||\boldsymbol{x}_{t}||\rightarrow\infty$.

Next, in order to study the similar nature of the influence function of the
MDPDE having different $\lambda$, note that the influence function given in
Theorem \ref{THM:IF_MDPDE} can be factored into two components as
\[
\mathcal{IF}((\boldsymbol{x}_{t},y_{t}),T_{\lambda},G_{0})=\widetilde{\Psi
}_{\lambda}(\boldsymbol{x}_{t}^{T}\boldsymbol{\beta}_{0},y_{t})\boldsymbol{J}%
_{\lambda}^{-1}(\boldsymbol{\beta}_{0})\boldsymbol{x}_{t},
\]
where the first part $\widetilde{\boldsymbol{\Psi}}_{\lambda}$ depends on the
score, $s=\boldsymbol{x}_{t}^{T}\boldsymbol{\beta}_{0}$, and the response,
$y_{t}$, and is defined as
\[
\widetilde{\Psi}_{\lambda}(s,y)=\frac{\left(  e^{\lambda s}+e^{s}\right)
\left(  e^{s}-y(1+e^{s})\right)  }{\left(  1+e^{s}\right)  ^{\lambda+2}}.
\]
Figure \ref{FIG:psi} shows the nature of this function over the score input at
$y=0,1$ for different values of $\lambda$. Clearly, the function
$\widetilde{\Psi}_{\lambda}$ corresponding to $\lambda=0$ (MLE) is unbounded
as $s\rightarrow\infty$, illustrating the well-known non-robust nature of the
MLE. However, for $\lambda>0$ the function $\widetilde{\Psi}_{\lambda}$ is
bounded in $s$ and becomes more re-descending as $\lambda$ increase, which
implies the increasing robustness of our proposed MDPDEs with increasing
$\lambda>0$.%

\begin{figure}[htbp]  \centering
$\
\begin{tabular}
[c]{cc}%
{\includegraphics[
trim=0.000000in 0.000000in 0.251411in 0.000000in,
height=2.6515in,
width=3.378in
]%
{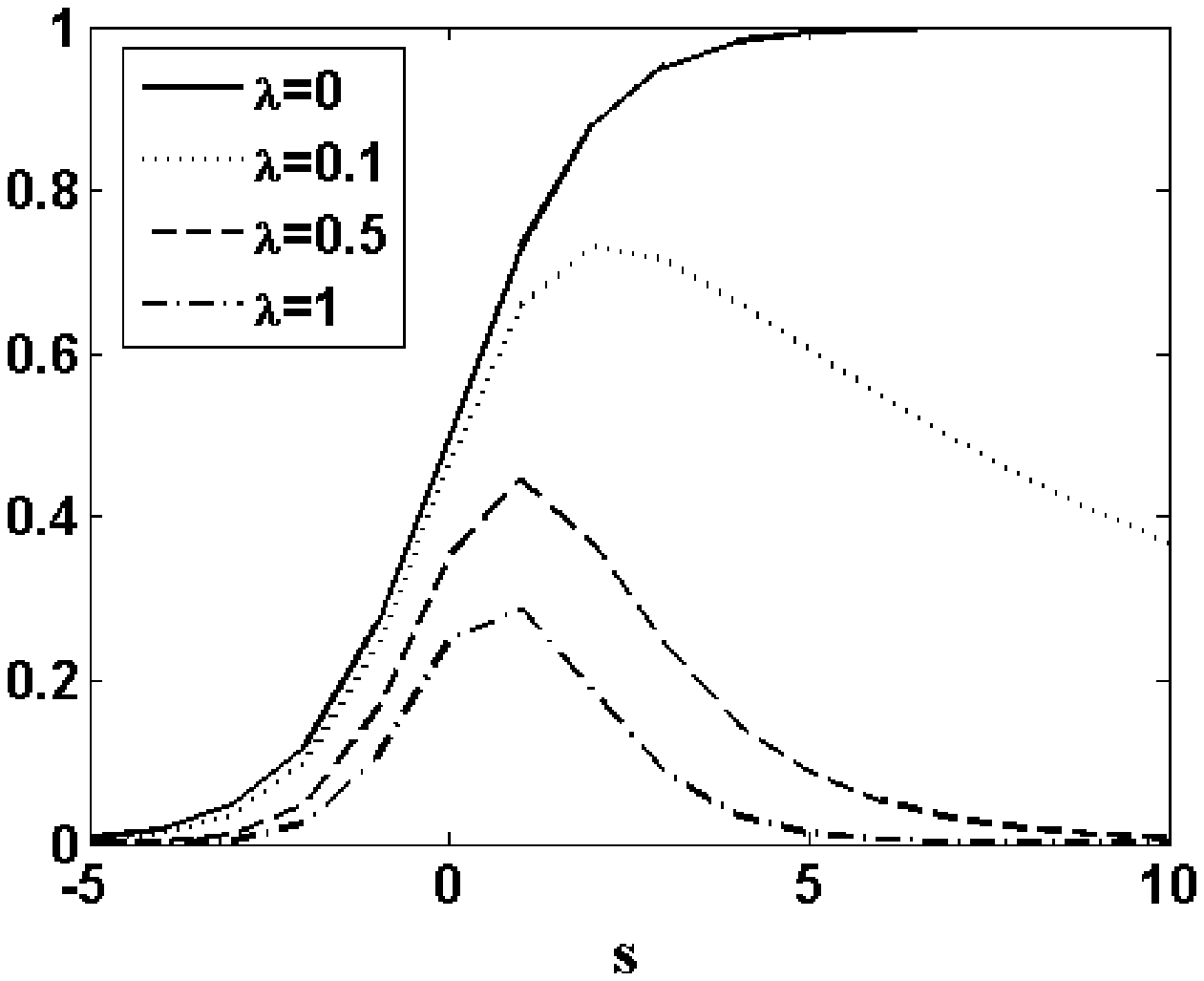}%
}
&
{\includegraphics[
trim=0.250828in 0.000000in 0.000000in 0.000000in,
height=2.6515in,
width=3.378in
]%
{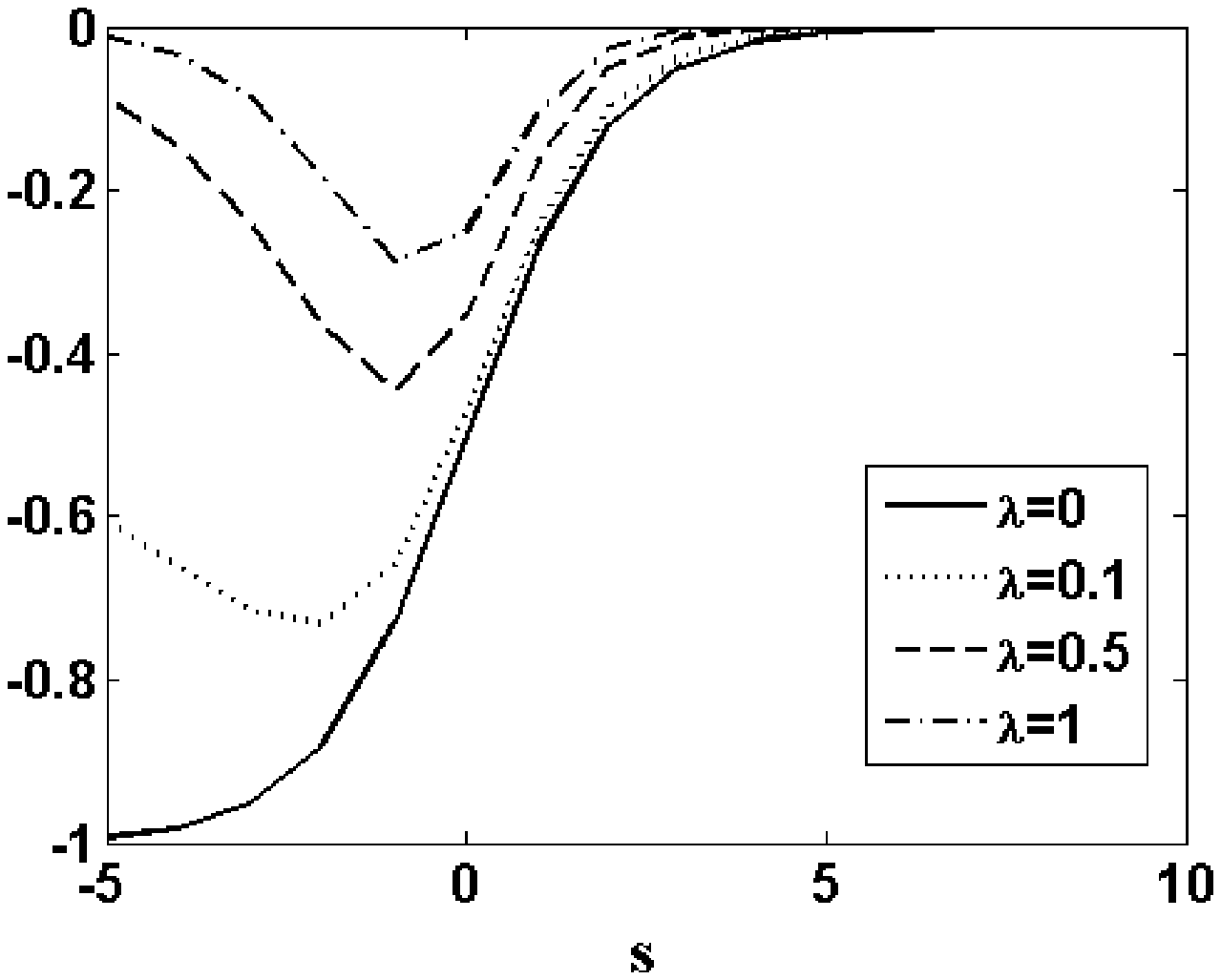}%
}
\\
(a) $y_{t}=0$ & (b) $y_{t}=1$%
\end{tabular}
\ \ $\caption{Plots of $\protect \widetilde{\Psi }_{\protect \lambda }(s;y)$
over $s $ for different $\protect \lambda $ and $y=0,1$.\label{FIG:psi}}%
\end{figure}%

Further, to examine the effect of different types of leverage points more
clearly, following Croux and Haesbroeck (2003), in Figure \ref{FIG:IF_MDPDE},
we present the influence function of the MDPDE of the first slope parameter
$\boldsymbol{\beta}_{1}$ over the covariates values in a logistic regression
model with two independent standard normal covariates and $\boldsymbol{\beta
}_{0}=(0,1,1)^{T}$ fixing $y_{t}=0$ (without loss of generality). We can see
that when both covariates tends to $-\infty$ the influence function becomes
zero for all MDPDEs including the MLE (at $\lambda=0$). These are the
\textquotedblleft good\textquotedblright\ leverage points, as noted in Croux
and Haesbroeck (2003), and all MDPDEs are robust with respect to such good
leverages as in the case of MLE. However, when the covariates approaches to
$\infty$ they yield bad leverage points (generally corresponding to
misclassified points) and have large influence for the MLE ($\lambda=0$). But
the influence function of the MDPDEs with $\lambda>0$ are quite small even for
these bad leverages and become even smaller as $\lambda$ increases. This again
proves the greater robustness of our proposed MDPDEs with larger positive
$\lambda$.%

\begin{figure}[htbp]  \centering
$\
\begin{tabular}
[c]{cc}%
{\includegraphics[
trim=0.000000in 0.000000in 0.418240in 0.000000in,
height=2.6515in,
width=3.2785in
]%
{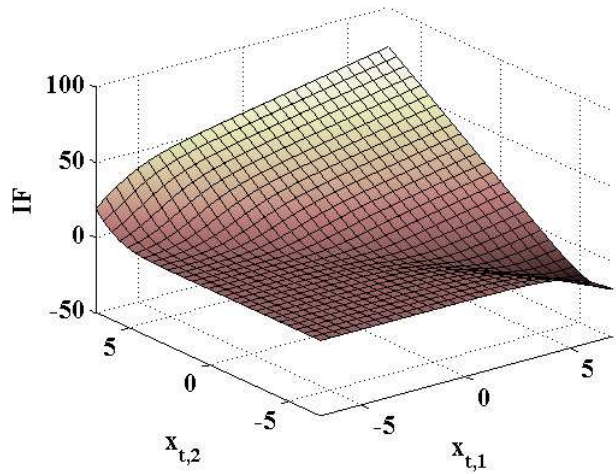}%
}
&
{\includegraphics[
trim=0.000000in 0.000000in 0.418824in 0.000000in,
height=2.6515in,
width=3.2768in
]%
{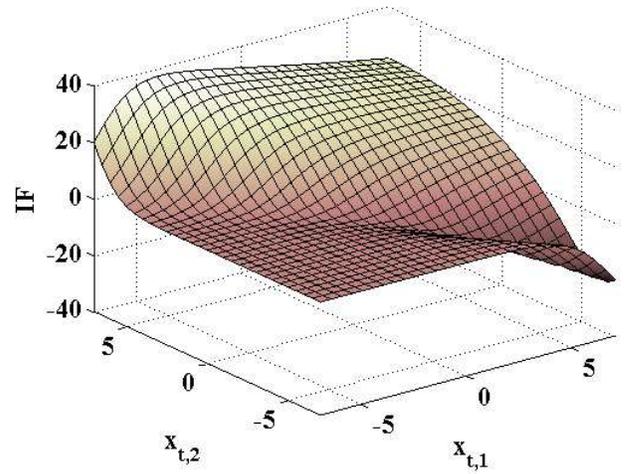}%
}
\\
(a) $\lambda=0$ & (b) $\lambda=0.1$\\%
{\includegraphics[
trim=0.000000in 0.000000in 0.418240in 0.000000in,
height=2.6515in,
width=3.2785in
]%
{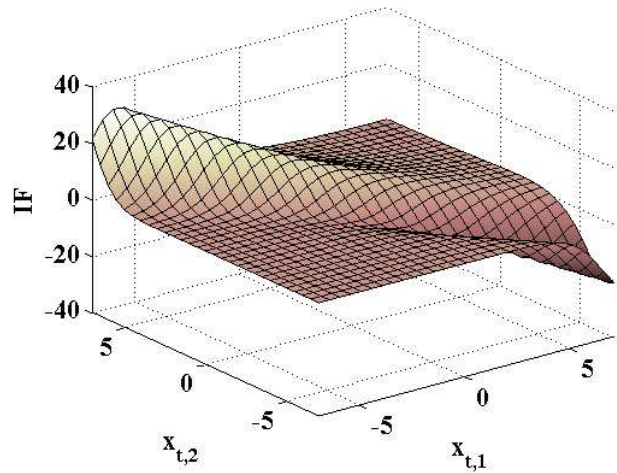}%
}
&
{\includegraphics[
trim=0.000000in 0.000000in 0.418824in 0.000000in,
height=2.6515in,
width=3.2768in
]%
{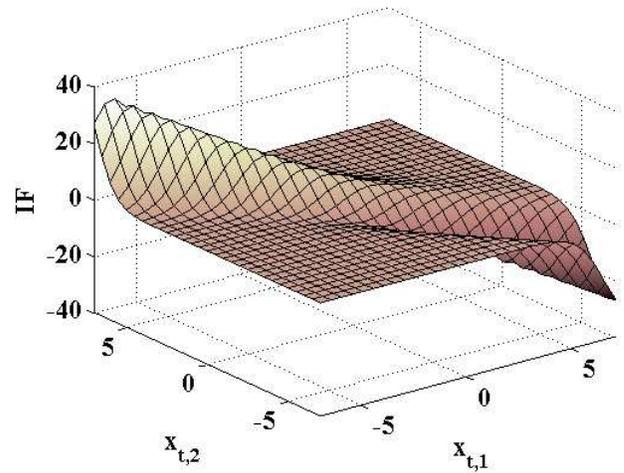}%
}
\\
(c) $\lambda=0.5$ & (d) $\lambda=1$%
\end{tabular}
\ \ $\caption{Influence function of the MDPDE of the first slope parameter
$\protect \beta_1$ for different $\protect \lambda$
($y_t=0$).\label{FIG:IF_MDPDE}}%
\end{figure}%

\begin{remark}
Under the set-up of Remark \ref{rem} with non-stochastic covariate also, we
can derive the influence function of the corresponding MDPDE,
$\widehat{\boldsymbol{\beta}}_{\lambda}^{\ast}$, following Ghosh and Basu
(2013). Whenever the covariates $\boldsymbol{x}_{i}$s are fixed, the
contamination need to be considered over the conditional distribution of
response given covariates which are not identical for each group with given
fixed covariates. Hence, as in Ghosh and Basu (2013), we can consider the
contamination in any one group or in all the group. Following the results in
Ghosh and Basu (2013) or by direct calculation, we get the influence function
of $\widehat{\boldsymbol{\beta}}_{\lambda}^{\ast}$ under contamination only in
one group ($i_{0}$-th, say) with covariate $\boldsymbol{x}_{i_{0}}$ as given
by
\[
\mathcal{IF}_{i_{0}}(y_{t_{i_{0}}},T_{\lambda},G_{0})=\boldsymbol{J}_{\lambda
}^{\ast-1}(\boldsymbol{\beta}_{0})\boldsymbol{\Psi}_{\lambda}(\boldsymbol{x}%
_{i_{0}},y_{t_{i_{0}}},\boldsymbol{\beta}_{0}),
\]
where $y_{t_{i_{0}}}$ is the contamination point in the contaminated
distribution of $Y$ given $\boldsymbol{X}=\boldsymbol{x}_{i_{0}}$. Similarly,
if there is contamination in all the groups with covariates $\boldsymbol{x}%
_{1},\ldots,\boldsymbol{x}_{I}$ respectively at the contamination points
$y_{t_{1}},\ldots,y_{t_{I}}$, then the resulting influence function has the
form
\[
\mathcal{IF}((y_{t_{1}},\ldots,y_{t_{I}}),T_{\lambda},G_{0})=\boldsymbol{J}%
_{\lambda}^{\ast-1}(\boldsymbol{\beta}_{0})\sum_{i=1}^{I}\boldsymbol{\Psi
}_{\lambda}(\boldsymbol{x}_{i},y_{t_{i}},\boldsymbol{\beta}_{0}),
\]
Note that, since the response in a logistic regression takes only values $0$
and $1$, the $y_{t_{i}}$ contamination points all take values only in
$\{0,1\}$ (misclassification errors) and hence all the above influence
functions are bounded with respect to contamination in response for all
$\lambda\geq0$. Hence, the effect of these (misclassification) error in
response cannot be clearly inferred only from these influence functions; see
Pregibon (1982), Copas (1988) and Victoria-Feser (2000) for more such analysis
of misclassification error in logistic regression with fixed design. However,
the above influence functions are bounded in the values of given fixed
covariates only for $\lambda>0$, implying the robustness of the MDPDEs with
$\lambda>0$ and non-robust nature of MLE (at $\lambda=0$) with respect to the
extreme values of the fixed design in any one group.
\end{remark}


\subsection{Influence function of the Wald-Type Test Statistics}

We will now study the robustness of the proposed Wald-type test of Section 3
through the influence function of the corresponding test statistics $W_{n}$
defined in Definition 5. Ignoring the multiplier $n$, let us define the
associated statistical functional for the test statistics $W_{n}$ evaluated at
any joint distribution $G$ as given by
\begin{equation}
W_{\lambda}(G)=\left(  \boldsymbol{M}^{T}\boldsymbol{T}_{\lambda
}(G)-\boldsymbol{m}\right)  ^{T}(\boldsymbol{M}^{T}\boldsymbol{\Sigma
}_{\lambda}(\widehat{\boldsymbol{\beta}}_{\lambda})\boldsymbol{M})^{-1}\left(
\boldsymbol{M}^{T}\boldsymbol{T}_{\lambda}(G)-\boldsymbol{m}\right)
.\label{WoF}%
\end{equation}

Now, considering the $\varepsilon$-contaminated joint distribution
$G_{\varepsilon}=(1-\varepsilon)G+\varepsilon\wedge_{\boldsymbol{w}}$ with
respect to the point mass contamination distribution $\wedge_{\boldsymbol{w}}$
at the contamination point $\boldsymbol{w}=(\boldsymbol{x}_{t},y_{t})$, the
influence function of $W_{\lambda}(\cdot)$ is defined as%
\begin{align*}
&  \mathcal{IF}(\boldsymbol{w},W_{\lambda},G)=\left.  \frac{\partial
W_{\lambda}(G_{\varepsilon})}{\partial\varepsilon}\right\vert _{\varepsilon
=0}\\
&  =\left(  \boldsymbol{M}^{T}T_{\lambda}(G)-\boldsymbol{m}\right)
^{T}\left(  \boldsymbol{M}^{T}\boldsymbol{\Sigma}_{\lambda}\left(
\boldsymbol{\beta}_{0}\right)  \boldsymbol{M}\right)  ^{-1}\boldsymbol{M}%
^{T}\mathcal{IF}(\boldsymbol{w},T_{\lambda},G).
\end{align*}
Now, assuming the null hypothesis to be true, let $G_{0}$ denote the joint
model distribution with true parameter value $\boldsymbol{\beta}_{0}$
satisfying $\boldsymbol{M}^{T}\boldsymbol{\beta}_{0}=\boldsymbol{m}$. Then,
under $G_{0}$, we have $\boldsymbol{T}_{\lambda}(G_{0})=\boldsymbol{\beta}%
_{0}$ and hence $\mathcal{IF}(\boldsymbol{w},W_{\lambda},G_{0})=\boldsymbol{0}%
$. Therefore, the first order influence function analysis is not adequate to
quantify the robustness of the proposed Wald-type test statistics $W_{\lambda
}$. It is bounded in the contamination points $\boldsymbol{w}=(\boldsymbol{x}%
_{t},y_{t})$ for all $\lambda\geq0$ but does not necessarily imply the
robustness of the tests since it includes the well-known non-robust MLE based
Wald-test at $\lambda=0$. This fact is consistent with the robustness analysis
of different other Wald-type tests under different set-ups (See, for example,
Rousseeuw and Ronchetti, 1979; Toma and Broniatowski, 2011; Ghosh et al., 2016
etc.) and we need to consider the second order influence analysis to asses the
robustness of $W_{\lambda}$.

The second order influence function of the Wald-type test statistics $W_{n}$
at the joint distribution $G$ is defined as
\begin{align*}
&  \mathcal{IF}_{2}(\boldsymbol{w},W_{\lambda},G)=\left.  \frac{\partial
^{2}W_{\lambda}(G_{\varepsilon})}{\partial\varepsilon^{2}}\right\vert
_{\varepsilon=0}\\
&  =\left(  \boldsymbol{M}^{T}\boldsymbol{T}_{\lambda}(G)-\boldsymbol{m}%
\right)  ^{T}\left(  \boldsymbol{M}^{T}\boldsymbol{\Sigma}_{\lambda}\left(
\boldsymbol{\beta}\right)  \boldsymbol{M}\right)  ^{-1}\boldsymbol{M}%
^{T}\mathcal{IF}_{2}(\boldsymbol{w},\boldsymbol{T}_{\lambda},G)\\
&  +\mathcal{IF}^{T}(\boldsymbol{w},\boldsymbol{T}_{\lambda},G)\boldsymbol{M}%
\left(  \boldsymbol{M}^{T}\boldsymbol{\Sigma}_{\lambda}\left(
\boldsymbol{\beta}\right)  \boldsymbol{M}\right)  ^{-1}\boldsymbol{M}%
^{T}\mathcal{IF}(\boldsymbol{w},\boldsymbol{T}_{\lambda},G).
\end{align*}
Again, under the null hypothesis $H_{0}$ with $\boldsymbol{\beta}_{0}$ being
the corresponding true parameter value, this second order influence function
simplifies further as presented in the following theorem and yields the
possibility to study the robustness of our proposed tests through its boundedness.

\begin{theorem}
\label{THM:second_IT_test} The second order influence function of the proposed
Wald-type test statistics $W_{n}$, given in Definition 5, at the null model
distribution $G_{0}$ having true parameter value $\boldsymbol{\beta}_{0}$ is
given by
\begin{align*}
&  \mathcal{IF}_{2}(\boldsymbol{w},W_{\lambda},G_{0})\\
&  =\mathcal{IF}^{T}(\boldsymbol{w},\boldsymbol{T}_{\lambda},G_{0}%
)\boldsymbol{M}\left(  \boldsymbol{M}^{T}\boldsymbol{\Sigma}_{\lambda}\left(
\boldsymbol{\beta}_{0}\right)  \boldsymbol{M}\right)  ^{-1}\boldsymbol{M}%
^{T}\mathcal{IF}(\boldsymbol{w},\boldsymbol{T}_{\lambda},G_{0}).\\
&  =\widetilde{\Psi}_{\lambda}^{2}(\boldsymbol{x}_{t}^{T}\boldsymbol{\beta
}_{0},y_{t})\boldsymbol{x}_{t}^{T}\boldsymbol{J}_{\lambda}^{-1}%
(\boldsymbol{\beta}_{0})\boldsymbol{M}\left(  \boldsymbol{M}^{T}%
\boldsymbol{\Sigma}_{\lambda}\left(  \boldsymbol{\beta}_{0}\right)
\boldsymbol{M}\right)  ^{-1}\boldsymbol{M}^{T}\boldsymbol{J}_{\lambda}%
^{-1}(\boldsymbol{\beta}_{0})\boldsymbol{x}_{t}.
\end{align*}

\end{theorem}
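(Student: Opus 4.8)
The plan is to work directly from the general second-order influence function expression for $\mathcal{IF}_{2}(\boldsymbol{w},W_{\lambda},G)$ displayed just above the statement, and to specialize it to $G=G_{0}$. Recall that $W_{\lambda}(G)$ in (\ref{WoF}) is a quadratic form in the ``residual'' vector $\boldsymbol{u}(G)=\boldsymbol{M}^{T}\boldsymbol{T}_{\lambda}(G)-\boldsymbol{m}$, with the middle matrix $(\boldsymbol{M}^{T}\boldsymbol{\Sigma}_{\lambda}(\boldsymbol{T}_{\lambda}(G))\boldsymbol{M})^{-1}$ itself depending on $G$ only through $\boldsymbol{T}_{\lambda}(G)$. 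Differentiating $W_{\lambda}(G_{\varepsilon})$ twice in $\varepsilon$ and evaluating at $\varepsilon=0$ produces, on top of the ``pure first-derivative'' contribution, only terms each carrying at least one explicit factor $\boldsymbol{u}(G_{0})$, namely the term built from the second-order influence function $\mathcal{IF}_{2}(\boldsymbol{w},\boldsymbol{T}_{\lambda},G_{0})$ of the functional and the terms coming from the $\varepsilon$-derivatives of the middle matrix. This is exactly the structure already recorded in the two-line expression for $\mathcal{IF}_{2}(\boldsymbol{w},W_{\lambda},G)$ preceding the theorem, so I would take that as the starting point rather than re-deriving it.

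The first key step is to invoke the Fisher consistency of $\boldsymbol{T}_{\lambda}$ established earlier, which gives $\boldsymbol{T}_{\lambda}(G_{0})=\boldsymbol{\beta}_{0}$; since the null hypothesis asserts $\boldsymbol{M}^{T}\boldsymbol{\beta}_{0}=\boldsymbol{m}$, this yields $\boldsymbol{u}(G_{0})=\boldsymbol{M}^{T}\boldsymbol{\beta}_{0}-\boldsymbol{m}=\boldsymbol{0}$. Consequently the first term in the general expression for $\mathcal{IF}_{2}(\boldsymbol{w},W_{\lambda},G)$, which is left-multiplied by $(\boldsymbol{M}^{T}\boldsymbol{T}_{\lambda}(G_{0})-\boldsymbol{m})^{T}$, vanishes identically, and the value of $\boldsymbol{\Sigma}_{\lambda}$ is frozen at $\boldsymbol{\beta}_{0}$. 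What survives is precisely $\mathcal{IF}^{T}(\boldsymbol{w},\boldsymbol{T}_{\lambda},G_{0})\boldsymbol{M}(\boldsymbol{M}^{T}\boldsymbol{\Sigma}_{\lambda}(\boldsymbol{\beta}_{0})\boldsymbol{M})^{-1}\boldsymbol{M}^{T}\mathcal{IF}(\boldsymbol{w},\boldsymbol{T}_{\lambda},G_{0})$, i.e.\ the first displayed line of the claim. This is coherent with the fact, already noted in the text, that the first-order influence function $\mathcal{IF}(\boldsymbol{w},W_{\lambda},G_{0})$ is zero, so that no spurious first-derivative cross-term appears.

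The second step is a direct substitution. By Theorem \ref{THM:IF_MDPDE}, $\mathcal{IF}(\boldsymbol{w},\boldsymbol{T}_{\lambda},G_{0})=\boldsymbol{J}_{\lambda}^{-1}(\boldsymbol{\beta}_{0})\boldsymbol{\Psi}_{\lambda}(\boldsymbol{x}_{t},y_{t},\boldsymbol{\beta}_{0})$, and by the factorisation of the $\boldsymbol{\Psi}$-function in (\ref{1.100}) one has $\boldsymbol{\Psi}_{\lambda}(\boldsymbol{x}_{t},y_{t},\boldsymbol{\beta}_{0})=\widetilde{\Psi}_{\lambda}(\boldsymbol{x}_{t}^{T}\boldsymbol{\beta}_{0},y_{t})\,\boldsymbol{x}_{t}$, where $\widetilde{\Psi}_{\lambda}$ is the scalar function introduced after Theorem \ref{THM:IF_MDPDE}. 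Inserting this into the surviving quadratic form and pulling the scalar $\widetilde{\Psi}_{\lambda}(\boldsymbol{x}_{t}^{T}\boldsymbol{\beta}_{0},y_{t})$ out of both slots turns it into its square, producing $\widetilde{\Psi}_{\lambda}^{2}(\boldsymbol{x}_{t}^{T}\boldsymbol{\beta}_{0},y_{t})\,\boldsymbol{x}_{t}^{T}\boldsymbol{J}_{\lambda}^{-1}(\boldsymbol{\beta}_{0})\boldsymbol{M}(\boldsymbol{M}^{T}\boldsymbol{\Sigma}_{\lambda}(\boldsymbol{\beta}_{0})\boldsymbol{M})^{-1}\boldsymbol{M}^{T}\boldsymbol{J}_{\lambda}^{-1}(\boldsymbol{\beta}_{0})\boldsymbol{x}_{t}$, which is the second displayed line and finishes the argument.

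The step I expect to need the most care is the first one: confirming that when the full second $\varepsilon$-derivative of the quadratic form in (\ref{WoF}) is written out, the only contribution not annihilated by the null relation $\boldsymbol{u}(G_{0})=\boldsymbol{0}$ is the one quadratic in $\mathcal{IF}(\boldsymbol{w},\boldsymbol{T}_{\lambda},G_{0})$ — in particular that the dependence of the weighting matrix $\boldsymbol{\Sigma}_{\lambda}(\boldsymbol{T}_{\lambda}(G_{\varepsilon}))$ on $\varepsilon$ is immaterial at $\varepsilon=0$ and may safely be replaced by its value at $\boldsymbol{\beta}_{0}$. Once that bookkeeping is settled, the remainder is routine substitution of results already proved in Sections 2 and 4.
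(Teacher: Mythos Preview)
Your proposal is correct and matches the paper's (implicit) approach: the paper does not give an explicit proof of this theorem, merely stating that the general second-order expression displayed just before it ``simplifies further'' under the null, and your argument spells out precisely that simplification --- the vanishing of $\boldsymbol{M}^{T}\boldsymbol{T}_{\lambda}(G_{0})-\boldsymbol{m}$ kills the first term, and substituting the factored form of $\mathcal{IF}(\boldsymbol{w},\boldsymbol{T}_{\lambda},G_{0})$ from Theorem~\ref{THM:IF_MDPDE} yields the second line. Your closing remark about the $\varepsilon$-dependence of the weighting matrix is a valid caveat, and you handle it correctly: any derivative of $(\boldsymbol{M}^{T}\boldsymbol{\Sigma}_{\lambda}(\boldsymbol{T}_{\lambda}(G_{\varepsilon}))\boldsymbol{M})^{-1}$ at $\varepsilon=0$ appears sandwiched by at least one factor of $\boldsymbol{u}(G_{0})=\boldsymbol{0}$ and hence drops out.
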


Note that, the influence function of the Wald-type test statistic is directly
a quadratic function of the corresponding MDPDE used. Hence, as described in
the previous subsection, the influence function for the proposed tests with
$\lambda>0$ will be small and bounded for all kinds of outliers in a logistic
regression model, whereas the classical MLE based Wald-type test will have an
unbounded influence function for large \textquotedblleft bad" leverage points.
Figure \ref{FIG:IF_testb1} shows the plots of this second order influence
functions for the Wald-type test statistics for different $\lambda$ for
testing the significance of the first slope parameter in a logistic regression
model with with two independent standard normal covariates and
$\boldsymbol{\beta}_{0}=(0,1,1)^{T}$ fixing $y_{t}=0$. The behavior of the
influence functions are again similar to those observed for the corresponding
$MDPDE$ in Figure \ref{FIG:IF_testb1}, which shows the greater robustness of
our proposal at larger positive $\lambda$ over the non-robust MLE based Wald
test at $\lambda=0$.%

\begin{figure}[htbp]  \centering
$\
\begin{tabular}
[c]{cc}%
{\includegraphics[
trim=0.000000in 0.000000in 0.418824in 0.000000in,
height=2.6515in,
width=3.2768in
]%
{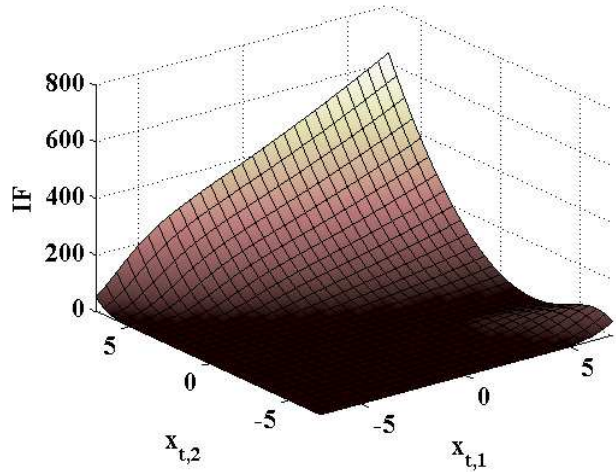}%
}
&
{\includegraphics[
trim=0.000000in 0.000000in 0.418824in 0.000000in,
height=2.6515in,
width=3.2768in
]%
{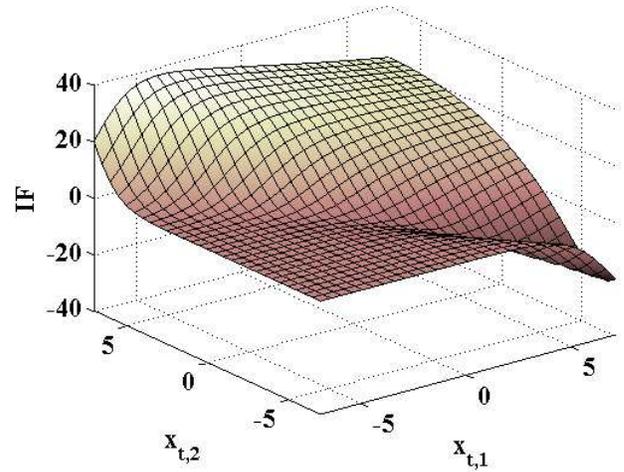}%
}
\\
(a) $\lambda=0$ & (b) $\lambda=0.1$\\%
{\includegraphics[
trim=0.000000in 0.000000in 0.418824in 0.000000in,
height=2.6515in,
width=3.2768in
]%
{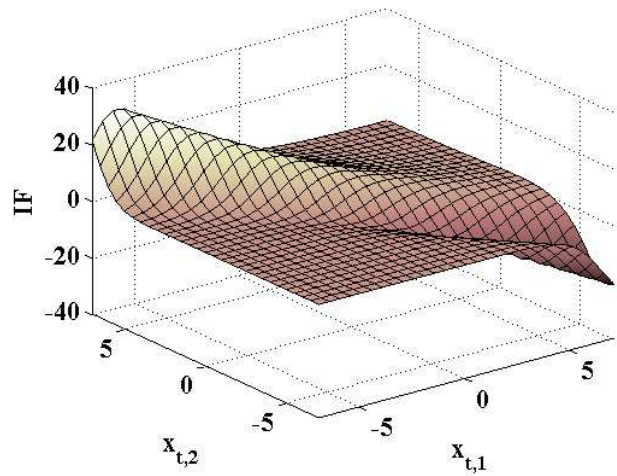}%
}
&
{\includegraphics[
trim=0.000000in 0.000000in 0.418824in 0.000000in,
height=2.6515in,
width=3.2768in
]%
{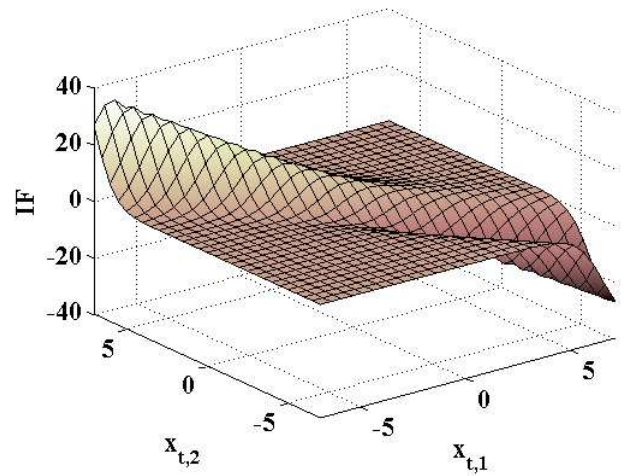}%
}
\\
(c) $\lambda=0.5$ & (d) $\lambda=1$%
\end{tabular}
\ \ $%
\caption{Second order Influence function of the Wald-type test statistics
for testing significance of the first slope parameter $\protect \beta_1$ for
different $\protect \lambda$ ($y_t=0$).\label{FIG:IF_testb1}}%
\end{figure}%


\subsection{Level and Power Influence Functions}

We now study the robustness of the proposed tests through the stability of
their Type-I and Type-II error which are two basic components for measuring
the performance of any testing procedure. In particular, we will study eth
local stability of level and power of the proposed tests through corresponding
influence function analysis. Note that the finite sample level and power of
our proposed Wald-type tests are difficult to compute and has no general form;
on the other hand, the tests are consistent having asymptotic power as one
against any fixed alternative. So, we will study the influence function of the
asymptotic level under the null $\boldsymbol{\beta}=\boldsymbol{\beta}_{0}$
and asymptotic power under the sequence of contiguous alternatives
$\boldsymbol{\beta}_{n}=\boldsymbol{\beta}_{0}+n^{-1/2}\boldsymbol{d}$ as
defined in, for example, Hampel et al.~(1986) and Ghosh et al.~(2016) among
others. In particular, assuming the contamination proportion tends to zero at
the same rate as the contiguous alternatives approaches to the null, here we
consider the following contaminated joint distribution for the power stability
calculation as
\begin{equation}
G_{n,\varepsilon,\boldsymbol{w}}^{P}=(1-\tfrac{\varepsilon}{\sqrt{n}%
})G_{\boldsymbol{\beta}_{n}}+\tfrac{\varepsilon}{\sqrt{n}}\wedge
_{\boldsymbol{w}},\label{EQ:power_F}%
\end{equation}
where $\boldsymbol{w}$ denote the contamination point $\boldsymbol{w}%
=(\boldsymbol{x}_{t}^{T},y_{t})^{T}$, and $G_{\boldsymbol{\beta}_{n}}$ denote
the joint model distribution with true parameter value $\boldsymbol{\beta
}=\boldsymbol{\beta}_{n}$. The contamination distribution to be considered for
the level stability check can be obtained by substituting $\boldsymbol{d}%
=\boldsymbol{0}$ in (\ref{EQ:power_F}), which yields
\[
G_{n,\varepsilon,\boldsymbol{w}}^{P}=(1-\tfrac{\varepsilon}{\sqrt{n}%
})G_{\boldsymbol{\beta}_{0}}+\tfrac{\varepsilon}{\sqrt{n}}\wedge
_{\boldsymbol{w}}.
\]
Then, the level and power influence functions are defined in terms of the
following quantities
\[
\alpha(\varepsilon,\boldsymbol{w})=\lim\limits_{n\rightarrow\infty
}P_{G_{n,\varepsilon,\boldsymbol{w}}^{L}}(W_{n}>\chi_{r,\alpha}^{2})\text{,}%
\]
and
\[
\pi(\boldsymbol{\beta}_{n},\varepsilon,\boldsymbol{x})=\lim
\limits_{n\rightarrow\infty}P_{G_{n,\varepsilon,\boldsymbol{w}}^{P}}%
(W_{n}>\chi_{r,\alpha}^{2}).
\]

\begin{definition}
The level influence function (LIF) and the power influence function (PIF) for
the Wald-type test statistics $W_{n}$ are defined respectively as
\[
\mathcal{LIF}(\boldsymbol{w};W_{n},G_{\boldsymbol{\beta}_{0}})=\left.
\dfrac{\partial}{\partial\varepsilon}\alpha(\varepsilon,\boldsymbol{w}%
)\right\vert _{\varepsilon=0},\quad\mathcal{PIF}(\mathbf{\boldsymbol{x}}%
;W_{n},G_{\boldsymbol{\beta}_{0}})=\left.  \dfrac{\partial}{\partial
\varepsilon}\pi(\boldsymbol{\beta}_{n},\varepsilon,\boldsymbol{w})\right\vert
_{\varepsilon=0}.
\]

\end{definition}

See Ghosh et al.~(2016) for an extensive discussion on the interpretations of
the level and power influence functions and their relations with the influence
function of the test statistics in the context of a general Wald-type test.

Next, we will derive the forms of the LIF and PIF for our proposed tests in
logistic regression model assuming the conditions required for the derivation
of asymptotic distributions of the MDPDE hold.

\begin{theorem}
\label{THM:7asymp_power_one} Assume that the conditions of Theorem 6 holds and
consider the contiguous alternatives $\boldsymbol{\beta}_{n}=\boldsymbol{\beta
}_{0}+n^{-1/2}\boldsymbol{d}$ along with the contaminated model in
(\ref{EQ:power_F}). Then we have the following results:

\begin{enumerate}
\item[(i)] The asymptotic distribution of the test statistics $W_{n}$ under
$G_{n,\varepsilon,\boldsymbol{w}}^{P}$ is non-central chi-square with $r$
degrees of freedom and the non-centrality parameter
\[
\delta=\widetilde{\boldsymbol{d}}_{\varepsilon,\boldsymbol{w},\lambda}%
^{T}(\boldsymbol{\beta}_{0})\boldsymbol{M}\left(  \boldsymbol{M}%
^{T}\boldsymbol{\Sigma}_{\lambda}\left(  \boldsymbol{\beta}_{0}\right)
\boldsymbol{M}\right)  ^{-1}\boldsymbol{M}^{T}\widetilde{\boldsymbol{d}%
}_{\varepsilon,\boldsymbol{w},\lambda}(\boldsymbol{\beta}_{0}),
\]
where $\widetilde{\boldsymbol{d}}_{\varepsilon,\boldsymbol{w},\lambda
}(\boldsymbol{\beta}_{0})=\boldsymbol{d}+\varepsilon\mathcal{IF}%
(\boldsymbol{w},\boldsymbol{T}_{\lambda},G_{\boldsymbol{\beta}_{0}}).$

\item[(ii)] The asymptotic power under $G_{n,\varepsilon,\boldsymbol{w}}^{P} $
can be approximated as
\begin{align}
\pi(\boldsymbol{\beta}_{n},\varepsilon,\boldsymbol{w}) &  \cong P\left(
\chi_{r}^{2}(\delta)>\chi_{r,\alpha}^{2}\right) \nonumber\\
&  \cong\sum\limits_{v=0}^{\infty}C_{v}\left(  \boldsymbol{M}^{T}%
\widetilde{\boldsymbol{d}}_{\varepsilon,\boldsymbol{w},\lambda}%
(\boldsymbol{\beta}_{0}),\left(  \boldsymbol{M}^{T}\boldsymbol{\Sigma
}_{\lambda}\left(  \boldsymbol{\beta}_{0}\right)  \boldsymbol{M}\right)
^{-1}\right)  P\left(  \chi_{r+2v}^{2}>\chi_{r,\alpha}^{2}\right)  ,
\end{align}
where
\[
C_{v}\left(  \boldsymbol{t},\boldsymbol{A}\right)  =\frac{\left(
\boldsymbol{t}^{T}\boldsymbol{At}\right)  ^{v}}{v!2^{v}}e^{-\frac{1}%
{2}\boldsymbol{t}^{T}\boldsymbol{At}},
\]
$\chi_{p}^{2}(\delta)$ denotes a non-central chi-square random variable with
$p$ degrees of freedom and $\delta$ as non-centrality parameter and $\chi
_{q}^{2}=\chi_{q}^{2}(0)$ denotes a central chi-square random variable having
degrees of freedom $q$.
\end{enumerate}
\end{theorem}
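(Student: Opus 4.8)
The plan is to first pin down the limiting law of $\sqrt{n}(\widehat{\boldsymbol{\beta}}_{\lambda}-\boldsymbol{\beta}_{0})$ when the data are i.i.d.\ from the contaminated contiguous sequence $G_{n,\varepsilon,\boldsymbol{w}}^{P}$ of (\ref{EQ:power_F}); both assertions then follow from the delta method and classical facts about quadratic forms in Gaussian vectors. Starting from the estimating equation $n^{-1}\sum_{i=1}^{n}\boldsymbol{\Psi}_{\lambda}(\boldsymbol{X}_{i},Y_{i},\widehat{\boldsymbol{\beta}}_{\lambda})=\boldsymbol{0}$ and Taylor-expanding $\boldsymbol{\Psi}_{\lambda}$ in its third argument around $\boldsymbol{\beta}_{0}$, the usual asymptotic-linearity argument for $M$-estimators gives
\[
\sqrt{n}(\widehat{\boldsymbol{\beta}}_{\lambda}-\boldsymbol{\beta}_{0})=-\boldsymbol{J}_{\lambda}^{-1}(\boldsymbol{\beta}_{0})\,\frac{1}{\sqrt{n}}\sum_{i=1}^{n}\boldsymbol{\Psi}_{\lambda}(\boldsymbol{X}_{i},Y_{i},\boldsymbol{\beta}_{0})+o_{p}(1),
\]
where the sample average of $\partial\boldsymbol{\Psi}_{\lambda}/\partial\boldsymbol{\beta}^{T}$ still converges in probability to $\boldsymbol{J}_{\lambda}(\boldsymbol{\beta}_{0})$ because $G_{n,\varepsilon,\boldsymbol{w}}^{P}\rightarrow G_{\boldsymbol{\beta}_{0}}$ and $\widehat{\boldsymbol{\beta}}_{\lambda}\overset{P}{\longrightarrow}\boldsymbol{\beta}_{0}$.

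The core step is to analyse the leading linear term. Under $G_{n,\varepsilon,\boldsymbol{w}}^{P}$ the summands form a triangular array of row-wise i.i.d.\ vectors, so I would split $n^{-1/2}\sum_{i=1}^{n}\boldsymbol{\Psi}_{\lambda}(\boldsymbol{X}_{i},Y_{i},\boldsymbol{\beta}_{0})$ into its expectation plus a centred part. The expectation decomposes as $E_{G_{n,\varepsilon,\boldsymbol{w}}^{P}}[\boldsymbol{\Psi}_{\lambda}(\boldsymbol{X},Y,\boldsymbol{\beta}_{0})]=(1-\tfrac{\varepsilon}{\sqrt{n}})E_{G_{\boldsymbol{\beta}_{n}}}[\boldsymbol{\Psi}_{\lambda}(\boldsymbol{X},Y,\boldsymbol{\beta}_{0})]+\tfrac{\varepsilon}{\sqrt{n}}\boldsymbol{\Psi}_{\lambda}(\boldsymbol{w},\boldsymbol{\beta}_{0})$, and differentiating the Fisher-consistency relation $E_{G_{\boldsymbol{\beta}}}[\boldsymbol{\Psi}_{\lambda}(\boldsymbol{X},Y,\boldsymbol{\beta})]=\boldsymbol{0}$ at $\boldsymbol{\beta}_{0}$ yields $E_{G_{\boldsymbol{\beta}_{n}}}[\boldsymbol{\Psi}_{\lambda}(\boldsymbol{X},Y,\boldsymbol{\beta}_{0})]=-n^{-1/2}\boldsymbol{J}_{\lambda}(\boldsymbol{\beta}_{0})\boldsymbol{d}+o(n^{-1/2})$; hence $\sqrt{n}\,E_{G_{n,\varepsilon,\boldsymbol{w}}^{P}}[\boldsymbol{\Psi}_{\lambda}(\boldsymbol{X},Y,\boldsymbol{\beta}_{0})]\rightarrow-\boldsymbol{J}_{\lambda}(\boldsymbol{\beta}_{0})\boldsymbol{d}+\varepsilon\,\boldsymbol{\Psi}_{\lambda}(\boldsymbol{w},\boldsymbol{\beta}_{0})$. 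For the centred part the Lindeberg--Feller CLT for triangular arrays applies, and since the contamination mass $\varepsilon/\sqrt{n}\rightarrow0$ its limiting covariance is the model quantity $\boldsymbol{K}_{\lambda}(\boldsymbol{\beta}_{0})$ of (\ref{K1}). Combining and pre-multiplying by $-\boldsymbol{J}_{\lambda}^{-1}(\boldsymbol{\beta}_{0})$, with Theorem \ref{THM:IF_MDPDE} used to recognise the contamination contribution, we obtain under $G_{n,\varepsilon,\boldsymbol{w}}^{P}$ that
\[
\sqrt{n}(\widehat{\boldsymbol{\beta}}_{\lambda}-\boldsymbol{\beta}_{0})\overset{\mathcal{L}}{\longrightarrow}\mathcal{N}\big(\widetilde{\boldsymbol{d}}_{\varepsilon,\boldsymbol{w},\lambda}(\boldsymbol{\beta}_{0}),\,\boldsymbol{\Sigma}_{\lambda}(\boldsymbol{\beta}_{0})\big),\qquad\widetilde{\boldsymbol{d}}_{\varepsilon,\boldsymbol{w},\lambda}(\boldsymbol{\beta}_{0})=\boldsymbol{d}+\varepsilon\,\mathcal{IF}(\boldsymbol{w},\boldsymbol{T}_{\lambda},G_{\boldsymbol{\beta}_{0}}).
\]

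With this in hand, part (i) follows: since $\boldsymbol{M}^{T}\boldsymbol{\beta}_{0}=\boldsymbol{m}$ we have $\boldsymbol{M}^{T}\widehat{\boldsymbol{\beta}}_{\lambda}-\boldsymbol{m}=\boldsymbol{M}^{T}(\widehat{\boldsymbol{\beta}}_{\lambda}-\boldsymbol{\beta}_{0})$, so $\sqrt{n}(\boldsymbol{M}^{T}\widehat{\boldsymbol{\beta}}_{\lambda}-\boldsymbol{m})\overset{\mathcal{L}}{\longrightarrow}\mathcal{N}(\boldsymbol{M}^{T}\widetilde{\boldsymbol{d}}_{\varepsilon,\boldsymbol{w},\lambda}(\boldsymbol{\beta}_{0}),\boldsymbol{M}^{T}\boldsymbol{\Sigma}_{\lambda}(\boldsymbol{\beta}_{0})\boldsymbol{M})$; by consistency of $\widehat{\boldsymbol{\beta}}_{\lambda}$, continuity of $\boldsymbol{\Sigma}_{\lambda}(\cdot)$ and Slutsky's theorem, $W_{n}$ has the same limit as $\boldsymbol{Z}^{T}(\boldsymbol{M}^{T}\boldsymbol{\Sigma}_{\lambda}(\boldsymbol{\beta}_{0})\boldsymbol{M})^{-1}\boldsymbol{Z}$ with $\boldsymbol{Z}\sim\mathcal{N}(\boldsymbol{M}^{T}\widetilde{\boldsymbol{d}}_{\varepsilon,\boldsymbol{w},\lambda}(\boldsymbol{\beta}_{0}),\boldsymbol{M}^{T}\boldsymbol{\Sigma}_{\lambda}(\boldsymbol{\beta}_{0})\boldsymbol{M})$, and the standard fact that $\boldsymbol{Z}^{T}\boldsymbol{\Sigma}^{-1}\boldsymbol{Z}\sim\chi_{r}^{2}(\boldsymbol{\mu}^{T}\boldsymbol{\Sigma}^{-1}\boldsymbol{\mu})$ for $\boldsymbol{Z}\sim\mathcal{N}_{r}(\boldsymbol{\mu},\boldsymbol{\Sigma})$ gives the non-centrality $\delta$ exactly as stated. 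For part (ii), the asymptotic power equals $P(\chi_{r}^{2}(\delta)>\chi_{r,\alpha}^{2})$; writing the non-central $\chi^{2}$ as a $\mathrm{Poisson}(\delta/2)$ mixture of central $\chi_{r+2v}^{2}$ variables gives $P(\chi_{r}^{2}(\delta)>\chi_{r,\alpha}^{2})=\sum_{v=0}^{\infty}\frac{(\delta/2)^{v}}{v!}e^{-\delta/2}\,P(\chi_{r+2v}^{2}>\chi_{r,\alpha}^{2})$, and with $\boldsymbol{t}=\boldsymbol{M}^{T}\widetilde{\boldsymbol{d}}_{\varepsilon,\boldsymbol{w},\lambda}(\boldsymbol{\beta}_{0})$, $\boldsymbol{A}=(\boldsymbol{M}^{T}\boldsymbol{\Sigma}_{\lambda}(\boldsymbol{\beta}_{0})\boldsymbol{M})^{-1}$, so that $\delta=\boldsymbol{t}^{T}\boldsymbol{A}\boldsymbol{t}$, the $v$-th weight is precisely $C_{v}(\boldsymbol{t},\boldsymbol{A})$, which is the announced expansion.

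The delicate step is the treatment of the leading linear term under the $n$-dependent contaminated law: one must justify the asymptotic-linearity expansion uniformly along $G_{n,\varepsilon,\boldsymbol{w}}^{P}$, extract the $\boldsymbol{d}$-contribution by differentiating the Fisher-consistency identity under the integral sign, and verify the Lindeberg condition for the triangular array so that the limiting covariance is exactly $\boldsymbol{K}_{\lambda}(\boldsymbol{\beta}_{0})$ despite the vanishing contamination mass. These are precisely the points at which the regularity conditions inherited from the asymptotic theory of the MDPDE developed in Section \ref{sec2} (in the spirit of Ghosh et al., 2016) are invoked; the remaining steps (Slutsky, the quadratic-form distribution, and the Poisson-mixture rewriting) are routine.
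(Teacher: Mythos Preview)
Your proposal is correct and reaches the result by essentially the same mechanism as the paper, though organized differently. The paper introduces the intermediate functional $\boldsymbol{\beta}_{n}^{\ast}=\boldsymbol{T}_{\lambda}(G_{n,\varepsilon,\boldsymbol{w}}^{P})$, expands $\boldsymbol{M}^{T}\widehat{\boldsymbol{\beta}}_{\lambda}-\boldsymbol{m}$ around $\boldsymbol{\beta}_{n}^{\ast}$, and then invokes two claims without detailed justification: $\sqrt{n}(\boldsymbol{\beta}_{n}^{\ast}-\boldsymbol{\beta}_{0})\rightarrow\widetilde{\boldsymbol{d}}_{\varepsilon,\boldsymbol{w},\lambda}(\boldsymbol{\beta}_{0})$ and $\sqrt{n}(\widehat{\boldsymbol{\beta}}_{\lambda}-\boldsymbol{\beta}_{n}^{\ast})\overset{\mathcal{L}}{\rightarrow}\mathcal{N}(\boldsymbol{0},\boldsymbol{\Sigma}_{\lambda}(\boldsymbol{\beta}_{0}))$. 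You instead center directly at $\boldsymbol{\beta}_{0}$, use the asymptotic-linearity representation of the $M$-estimator, and split the leading sum into its mean (producing the shift $\widetilde{\boldsymbol{d}}$ via the differentiated Fisher-consistency identity and the contamination mass) and a centred triangular-array part handled by Lindeberg--Feller (producing $\boldsymbol{K}_{\lambda}(\boldsymbol{\beta}_{0})$ and hence $\boldsymbol{\Sigma}_{\lambda}(\boldsymbol{\beta}_{0})$). Your route is really the explicit computation underlying the paper's two asserted facts; it has the advantage of making transparent exactly where the drift $\boldsymbol{d}$ and the contamination term $\varepsilon\,\mathcal{IF}$ each enter, at the cost of carrying out the regularity checks you correctly flag as the delicate step. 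The concluding steps---Slutsky, quadratic form in a Gaussian yielding a noncentral $\chi^{2}$, and the Poisson-mixture series for its tail---coincide with the paper's.
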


\begin{proof}
Let us denote $\boldsymbol{\beta}_{n}^{\ast}=\boldsymbol{T}_{\lambda
}(G_{n,\varepsilon,\boldsymbol{w}}^{P})$. Then, we get
\begin{align}
W_{n} &  =n(\boldsymbol{M}^{T}\widehat{\boldsymbol{\beta}}_{\lambda
}-\boldsymbol{m})^{T}\left(  \boldsymbol{M}^{T}\boldsymbol{\Sigma}_{\lambda
}\left(  \boldsymbol{\beta}_{0}\right)  \boldsymbol{M}\right)  ^{-1}%
(\boldsymbol{M}^{T}\widehat{\boldsymbol{\beta}}_{\lambda}-\boldsymbol{m}%
)\nonumber\\
&  =n\left(  \boldsymbol{M}^{T}\boldsymbol{\beta}_{n}^{\ast}-\boldsymbol{m}%
\right)  ^{T}\left(  \boldsymbol{M}^{T}\boldsymbol{\Sigma}_{\lambda}\left(
\boldsymbol{\beta}_{0}\right)  \boldsymbol{M}\right)  ^{-1}\left(
\boldsymbol{M}^{T}\boldsymbol{\beta}_{n}^{\ast}-\boldsymbol{m}\right)
\nonumber\\
&  +n(\widehat{\boldsymbol{\beta}}_{\lambda}-\boldsymbol{\beta}_{n}^{\ast
})^{T}\boldsymbol{M}\left(  \boldsymbol{M}^{T}\boldsymbol{\Sigma}_{\lambda
}\left(  \boldsymbol{\beta}_{0}\right)  \boldsymbol{M}\right)  ^{-1}%
\boldsymbol{M}^{T}(\widehat{\boldsymbol{\beta}}_{\lambda}-\boldsymbol{\beta
}_{n}^{\ast})\nonumber\\
&  +n(\widehat{\boldsymbol{\beta}}_{\lambda}-\boldsymbol{\beta}_{n}^{\ast
})^{T}\boldsymbol{M}\left(  \boldsymbol{M}^{T}\boldsymbol{\Sigma}_{\lambda
}\left(  \boldsymbol{\beta}_{0}\right)  \boldsymbol{M}\right)  ^{-1}%
(\boldsymbol{M}^{T}\widehat{\boldsymbol{\beta}}_{\lambda}-\boldsymbol{m}%
)\nonumber\\
&  =S_{1,n}+S_{2,n}+S_{3,n}.\label{EQ:eq0}%
\end{align}
Next, one can show that
\begin{align}
\sqrt{n}(\boldsymbol{\beta}_{n}^{\ast}-\boldsymbol{\beta}_{0}) &
=\boldsymbol{d}+\varepsilon\mathcal{IF}\left(  \boldsymbol{w},\boldsymbol{T}%
_{\lambda},G_{\boldsymbol{\beta}_{0}}\right)  +o_{p}(\boldsymbol{1}%
_{p})\nonumber\\
&  =\widetilde{\boldsymbol{d}}_{\varepsilon,\boldsymbol{w},\lambda
}(\boldsymbol{\theta}_{0})+o_{p}(\boldsymbol{1}_{p}).\label{CDT}%
\end{align}
Thus, we get
\begin{equation}
\sqrt{n}(\boldsymbol{M}^{T}\boldsymbol{\beta}_{n}^{\ast}-\boldsymbol{m}%
)=\boldsymbol{M}^{T}\widetilde{\boldsymbol{d}}_{\varepsilon,\boldsymbol{w}%
,\lambda}(\boldsymbol{\theta}_{0})+o_{p}(\boldsymbol{1}_{p}).\label{291}%
\end{equation}
Further, under $G_{n,\varepsilon,\boldsymbol{w}}^{P}$, the asymptotic
distribution of MDPDE yields
\begin{equation}
\sqrt{n}(\widehat{\boldsymbol{\beta}}_{\lambda}-\boldsymbol{\beta}_{n}^{\ast
})\underset{n\rightarrow\infty}{\overset{\mathcal{L}}{\longrightarrow}%
}\mathcal{N}\left(  \boldsymbol{0},\boldsymbol{\Sigma}_{\lambda}\left(
\boldsymbol{\beta}_{0}\right)  \right)  .\label{292}%
\end{equation}
Thus, we get
\[
S_{3,n}\underset{n\rightarrow\infty}{\overset{\mathcal{L}}{\longrightarrow}%
}\chi_{r}^{2}.
\]
Combining (\ref{EQ:eq0}), (\ref{291}) and (\ref{292}), we get
\[
W_{n}=\boldsymbol{Z}_{n}^{T}\left(  \boldsymbol{M}^{T}\boldsymbol{\Sigma
}_{\lambda}\left(  \boldsymbol{\beta}_{0}\right)  \boldsymbol{M}\right)
^{-1}\boldsymbol{Z}_{n}+o_{p}(1),
\]
where
\[
\boldsymbol{Z}_{n}=\sqrt{n}\boldsymbol{M}^{T}(\widehat{\boldsymbol{\beta}%
}_{\lambda}-\boldsymbol{\beta}_{n}^{\ast})+\boldsymbol{M}^{T}%
\widetilde{\boldsymbol{d}}_{\varepsilon,\boldsymbol{w},\lambda}%
(\boldsymbol{\theta}_{0}).
\]
By (\ref{292}),
\[
\boldsymbol{Z}_{n}\underset{n\rightarrow\infty}{\overset{\mathcal{L}%
}{\longrightarrow}}\mathcal{N}\left(  \boldsymbol{M}^{T}%
\widetilde{\boldsymbol{d}}_{\varepsilon,\boldsymbol{w},\lambda}%
(\boldsymbol{\theta}_{0}),\boldsymbol{M}^{T}\boldsymbol{\Sigma}_{\lambda
}\left(  \boldsymbol{\beta}_{0}\right)  \boldsymbol{M}\right)  ,
\]
and hence we get that
\[
W_{n}\underset{n\rightarrow\infty}{\overset{\mathcal{L}}{\longrightarrow}}%
\chi_{r}^{2}(\delta),
\]
where $\delta$ is as defined in Part (i) of the theorem.\newline Part (ii) of
the theorem follows from Part (i) using the infinite series expansion of a
non-central distribution function in terms of that of the central chi-square
variables:
\begin{align}
\pi(\boldsymbol{\beta}_{n},\varepsilon,\boldsymbol{w}) &  =\lim_{n\rightarrow
\infty}P_{G_{n,\varepsilon,\boldsymbol{w}}^{P}}(W_{n}>\chi_{r,\alpha}%
^{2})\cong P(\chi_{r,\delta}^{2}>\chi_{r,\alpha}^{2})\nonumber\\
&  =\sum\limits_{v=0}^{\infty}C_{v}\left(  \boldsymbol{M}^{T}%
\widetilde{\boldsymbol{d}}_{\varepsilon,\boldsymbol{w},\lambda}%
(\boldsymbol{\beta}_{0}),\left(  \boldsymbol{M}^{T}\boldsymbol{\Sigma
}_{\lambda}\left(  \boldsymbol{\beta}_{0}\right)  \boldsymbol{M}\right)
^{-1}\right)  P\left(  \chi_{r+2v}^{2}>\chi_{r,\alpha}^{2}\right)  .\nonumber
\end{align}

\end{proof}

\begin{corollary}
\label{COR:7cont_power_one} Putting $\varepsilon=0$ in Theorem
\ref{THM:7asymp_power_one}, we get the asymptotic power of the proposed
Wald-type tests under the contiguous alternative hypotheses $\boldsymbol{\beta
}_{n}=\boldsymbol{\beta}_{0}+n^{-1/2}\boldsymbol{d}$ as
\begin{equation}
\pi(\boldsymbol{\beta}_{n})=\pi(\boldsymbol{\beta}_{n},0,\boldsymbol{w}%
)\cong\sum\limits_{v=0}^{\infty}C_{v}\left(  \boldsymbol{M}^{T}\boldsymbol{d}%
,\left(  \boldsymbol{M}^{T}\boldsymbol{\Sigma}_{\lambda}\left(
\boldsymbol{\beta}_{0}\right)  \boldsymbol{M}\right)  ^{-1}\right)  P\left(
\chi_{r+2v}^{2}>\chi_{r,\alpha}^{2}\right)  .\nonumber
\end{equation}
This is identical with the results obtained earlier in Theorem 10 independently.
\end{corollary}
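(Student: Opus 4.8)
The plan is to specialise Theorem~\ref{THM:7asymp_power_one} to the contamination-free case and then to recognise the resulting series as the standard non-central chi-square tail probability that already appeared in Theorem~10. This is a direct substitution argument; there is no new asymptotic work to do.

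First I would set $\varepsilon=0$ in the contaminated joint distribution $G_{n,\varepsilon,\boldsymbol{w}}^{P}$ of~(\ref{EQ:power_F}); it collapses to the pure model distribution $G_{\boldsymbol{\beta}_{n}}$ at the contiguous alternative $\boldsymbol{\beta}_{n}=\boldsymbol{\beta}_{0}+n^{-1/2}\boldsymbol{d}$, so all the conclusions of Theorem~\ref{THM:7asymp_power_one} remain valid. In the drift vector $\widetilde{\boldsymbol{d}}_{\varepsilon,\boldsymbol{w},\lambda}(\boldsymbol{\beta}_{0})=\boldsymbol{d}+\varepsilon\,\mathcal{IF}(\boldsymbol{w},\boldsymbol{T}_{\lambda},G_{\boldsymbol{\beta}_{0}})$ the contamination term drops out, leaving $\widetilde{\boldsymbol{d}}_{0,\boldsymbol{w},\lambda}(\boldsymbol{\beta}_{0})=\boldsymbol{d}$, which no longer depends on the contamination point $\boldsymbol{w}$. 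Substituting this into Part~(ii) of Theorem~\ref{THM:7asymp_power_one} yields at once
\[
\pi(\boldsymbol{\beta}_{n})=\sum_{v=0}^{\infty}C_{v}\!\left(\boldsymbol{M}^{T}\boldsymbol{d},\left(\boldsymbol{M}^{T}\boldsymbol{\Sigma}_{\lambda}\left(\boldsymbol{\beta}_{0}\right)\boldsymbol{M}\right)^{-1}\right)P\!\left(\chi_{r+2v}^{2}>\chi_{r,\alpha}^{2}\right),
\]
i.e.\ the formula displayed in the statement of the corollary; this is the first assertion.

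For the second assertion I would verify that this series equals the approximate power produced in Theorem~10. Writing $\boldsymbol{t}=\boldsymbol{M}^{T}\boldsymbol{d}$ and $\boldsymbol{A}=(\boldsymbol{M}^{T}\boldsymbol{\Sigma}_{\lambda}(\boldsymbol{\beta}_{0})\boldsymbol{M})^{-1}$, the coefficient $C_{v}(\boldsymbol{t},\boldsymbol{A})=\frac{(\boldsymbol{t}^{T}\boldsymbol{A}\boldsymbol{t})^{v}}{v!\,2^{v}}e^{-\frac12\boldsymbol{t}^{T}\boldsymbol{A}\boldsymbol{t}}$ is exactly the $\mathrm{Poisson}(\delta/2)$ probability mass at $v$ with $\delta=\boldsymbol{t}^{T}\boldsymbol{A}\boldsymbol{t}$, so the sum is the well-known representation of the survival function of a non-central $\chi_{r}^{2}(\delta)$ variate as a Poisson mixture of central $\chi_{r+2v}^{2}$ survival functions, namely $1-F_{\chi_{r}^{2}(\delta)}(\chi_{r,\alpha}^{2})$. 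This is precisely the form of the power given in Theorem~10, so the two derivations coincide.

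The only delicate point is matching the non-centrality parameters: one must identify the $\delta$ obtained here, $\delta=(\boldsymbol{M}^{T}\boldsymbol{d})^{T}(\boldsymbol{M}^{T}\boldsymbol{\Sigma}_{\lambda}(\boldsymbol{\beta}_{0})\boldsymbol{M})^{-1}(\boldsymbol{M}^{T}\boldsymbol{d})$, with the quadratic form appearing in Theorem~10; this follows from the limiting law $\sqrt{n}(\boldsymbol{M}^{T}\widehat{\boldsymbol{\beta}}_{\lambda}-\boldsymbol{m})\overset{\mathcal{L}}{\longrightarrow}\mathcal{N}(\boldsymbol{M}^{T}\boldsymbol{d},\boldsymbol{M}^{T}\boldsymbol{\Sigma}_{\lambda}(\boldsymbol{\beta}_{0})\boldsymbol{M})$ under $\boldsymbol{\beta}_{n}$ together with the normaliser $(\boldsymbol{M}^{T}\boldsymbol{\Sigma}_{\lambda}(\boldsymbol{\beta}_{0})\boldsymbol{M})^{-1}$ built into $W_{n}$. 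Apart from this bookkeeping the argument is the routine series manipulation described above, and I expect no substantive obstacle.
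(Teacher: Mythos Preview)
Your proposal is correct and follows exactly the route the paper intends: the corollary is obtained by the direct substitution $\varepsilon=0$ in Theorem~\ref{THM:7asymp_power_one}, and your additional identification of the series with $1-F_{\chi_{r}^{2}(\delta)}(\chi_{r,\alpha}^{2})$ via the Poisson-mixture representation of the non-central chi-square is the natural (and standard) way to tie it back to Theorem~10. The only remark is that the non-centrality parameter you derive, $\delta=(\boldsymbol{M}^{T}\boldsymbol{d})^{T}(\boldsymbol{M}^{T}\boldsymbol{\Sigma}_{\lambda}(\boldsymbol{\beta}_{0})\boldsymbol{M})^{-1}\boldsymbol{M}^{T}\boldsymbol{d}$, is the correct one for $W_{n}$; the expression $\boldsymbol{d}^{T}\boldsymbol{\Sigma}_{\lambda}(\boldsymbol{\beta}_{0})\boldsymbol{d}$ printed in Theorem~10 of the paper should be read as this quadratic form, so your ``delicate point'' is really just reconciling with a typographical slip rather than a substantive issue.
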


\begin{corollary}
\label{COR:7asymp_level_one} Putting $\boldsymbol{d}=\boldsymbol{0}$ in
Theorem \ref{THM:7asymp_power_one}, we get the asymptotic distribution of
$W_{n}$ under $G_{n,\varepsilon,\boldsymbol{w}}^{L}$ as the non-central
chi-square distribution having $r$ degrees of freedom and non-centrality
parameter
\[
\varepsilon^{2}\mathcal{IF}(\boldsymbol{w};\boldsymbol{T}_{\lambda
},G_{\boldsymbol{\beta}_{0}})^{T}\boldsymbol{M}\left(  \boldsymbol{M}%
^{T}\boldsymbol{\Sigma}_{\lambda}\left(  \boldsymbol{\beta}_{0}\right)
\boldsymbol{M}\right)  ^{-1}\boldsymbol{M}^{T}\mathcal{IF}(\boldsymbol{w}%
;\boldsymbol{T}_{\lambda},G_{\boldsymbol{\beta}_{0}}).
\]
Then, the asymptotic level under contiguous contamination is given by
\begin{align}
&  \alpha(\varepsilon,\boldsymbol{w})=\pi(\boldsymbol{\beta}_{0}%
,\varepsilon,\boldsymbol{w})\nonumber\\
&  \cong\sum\limits_{v=0}^{\infty}C_{v}\left(  \varepsilon\boldsymbol{M}%
^{T}\mathcal{IF}(\boldsymbol{w};\boldsymbol{T}_{\lambda},G_{\boldsymbol{\beta
}_{0}}),\left(  \boldsymbol{M}^{T}\boldsymbol{\Sigma}_{\lambda}\left(
\boldsymbol{\beta}_{0}\right)  \boldsymbol{M}\right)  ^{-1}\right)  P\left(
\chi_{r+2v}^{2}>\chi_{r,\alpha}^{2}\right)  .\nonumber
\end{align}
In particular, as $\varepsilon\rightarrow0,\boldsymbol{\beta}_{n}^{\ast
}\rightarrow\boldsymbol{\beta}_{0}$ and the non-centrality parameter of the
above asymptotic distribution tends to zero leading to the null distribution
of $W_{n}$.
\end{corollary}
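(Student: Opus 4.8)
The plan is to obtain the present corollary as a direct specialization of Theorem~\ref{THM:7asymp_power_one}, taking $\boldsymbol{d}=\boldsymbol{0}$. First I would observe that with $\boldsymbol{d}=\boldsymbol{0}$ the contiguous alternative $\boldsymbol{\beta}_{n}=\boldsymbol{\beta}_{0}+n^{-1/2}\boldsymbol{d}$ collapses to $\boldsymbol{\beta}_{n}\equiv\boldsymbol{\beta}_{0}$, so the contaminated joint distribution $G_{n,\varepsilon,\boldsymbol{w}}^{P}$ of (\ref{EQ:power_F}) becomes precisely the level-contamination distribution $G_{n,\varepsilon,\boldsymbol{w}}^{L}=(1-\tfrac{\varepsilon}{\sqrt{n}})G_{\boldsymbol{\beta}_{0}}+\tfrac{\varepsilon}{\sqrt{n}}\wedge_{\boldsymbol{w}}$ used in the definition of $\alpha(\varepsilon,\boldsymbol{w})$; moreover the null hypothesis $\boldsymbol{M}^{T}\boldsymbol{\beta}_{0}=\boldsymbol{m}$ now holds and $\boldsymbol{T}_{\lambda}(G_{\boldsymbol{\beta}_{0}})=\boldsymbol{\beta}_{0}$ by the Fisher consistency established after Definition~\ref{DEF:MDPDE_func}. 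Since setting $\boldsymbol{d}=\boldsymbol{0}$ does not disturb any of the regularity assumptions, the whole conclusion of Theorem~\ref{THM:7asymp_power_one} transfers verbatim.

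Next I would substitute $\boldsymbol{d}=\boldsymbol{0}$ into the drift vector $\widetilde{\boldsymbol{d}}_{\varepsilon,\boldsymbol{w},\lambda}(\boldsymbol{\beta}_{0})=\boldsymbol{d}+\varepsilon\,\mathcal{IF}(\boldsymbol{w},\boldsymbol{T}_{\lambda},G_{\boldsymbol{\beta}_{0}})$ appearing in (\ref{CDT})--(\ref{291}), reducing it to $\varepsilon\,\mathcal{IF}(\boldsymbol{w},\boldsymbol{T}_{\lambda},G_{\boldsymbol{\beta}_{0}})$. Plugging this into the non-centrality parameter of Part~(i), namely $\delta=\widetilde{\boldsymbol{d}}_{\varepsilon,\boldsymbol{w},\lambda}^{T}(\boldsymbol{\beta}_{0})\boldsymbol{M}(\boldsymbol{M}^{T}\boldsymbol{\Sigma}_{\lambda}(\boldsymbol{\beta}_{0})\boldsymbol{M})^{-1}\boldsymbol{M}^{T}\widetilde{\boldsymbol{d}}_{\varepsilon,\boldsymbol{w},\lambda}(\boldsymbol{\beta}_{0})$, pulls out a factor $\varepsilon^{2}$ and yields exactly the stated quantity $\varepsilon^{2}\mathcal{IF}(\boldsymbol{w},\boldsymbol{T}_{\lambda},G_{\boldsymbol{\beta}_{0}})^{T}\boldsymbol{M}(\boldsymbol{M}^{T}\boldsymbol{\Sigma}_{\lambda}(\boldsymbol{\beta}_{0})\boldsymbol{M})^{-1}\boldsymbol{M}^{T}\mathcal{IF}(\boldsymbol{w},\boldsymbol{T}_{\lambda},G_{\boldsymbol{\beta}_{0}})$; hence, by Part~(i), $W_{n}$ under $G_{n,\varepsilon,\boldsymbol{w}}^{L}$ converges in law to $\chi_{r}^{2}(\delta)$ with this $\delta$. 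The formula for $\alpha(\varepsilon,\boldsymbol{w})=\pi(\boldsymbol{\beta}_{0},\varepsilon,\boldsymbol{w})$ then follows at once from the infinite series representation of the non-central chi-square distribution function in Part~(ii), with $\boldsymbol{M}^{T}\boldsymbol{d}$ there replaced by $\varepsilon\boldsymbol{M}^{T}\mathcal{IF}(\boldsymbol{w},\boldsymbol{T}_{\lambda},G_{\boldsymbol{\beta}_{0}})$ in each coefficient $C_{v}$.

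For the final assertion I would let $\varepsilon\rightarrow0$: from (\ref{CDT}) with $\boldsymbol{d}=\boldsymbol{0}$ we have $\sqrt{n}(\boldsymbol{\beta}_{n}^{\ast}-\boldsymbol{\beta}_{0})=\varepsilon\,\mathcal{IF}(\boldsymbol{w},\boldsymbol{T}_{\lambda},G_{\boldsymbol{\beta}_{0}})+o_{p}(\boldsymbol{1}_{p})$, so as $\varepsilon\rightarrow0$ the drift vanishes, $\boldsymbol{\beta}_{n}^{\ast}\rightarrow\boldsymbol{\beta}_{0}$, the non-centrality parameter $\delta\rightarrow0$, and the limiting law of $W_{n}$ collapses to the central $\chi_{r}^{2}$, i.e.\ to the null distribution of $W_{n}$ obtained earlier; correspondingly $\alpha(0,\boldsymbol{w})$ equals the nominal level $\alpha$.

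The argument is essentially one substitution into the already-established Theorem~\ref{THM:7asymp_power_one}, so I do not expect any real obstacle. The only points that need mild care are: (a) confirming that $G_{n,\varepsilon,\boldsymbol{w}}^{P}$ evaluated at $\boldsymbol{d}=\boldsymbol{0}$ coincides literally with $G_{n,\varepsilon,\boldsymbol{w}}^{L}$, which is immediate from (\ref{EQ:power_F}); and (b) checking that the $o_{p}$ remainders in the expansions (\ref{CDT})--(\ref{292}) remain valid in this special case, which they do since the setting is a particular instance of the one treated in the proof of Theorem~\ref{THM:7asymp_power_one}. Hence Parts~(i) and~(ii) carry over without change and the corollary follows.
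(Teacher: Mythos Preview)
Your proposal is correct and follows exactly the approach the paper takes: the corollary is stated without a separate proof and is obtained simply by substituting $\boldsymbol{d}=\boldsymbol{0}$ into Theorem~\ref{THM:7asymp_power_one}, which is precisely the specialization you carry out. The details you spell out (identifying $G_{n,\varepsilon,\boldsymbol{w}}^{P}$ with $G_{n,\varepsilon,\boldsymbol{w}}^{L}$, reducing $\widetilde{\boldsymbol{d}}_{\varepsilon,\boldsymbol{w},\lambda}$ to $\varepsilon\,\mathcal{IF}$, and letting $\varepsilon\rightarrow0$) are all implicit in the paper's one-line derivation.
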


Now we can easily obtain the the power and level influence functions of the
Wald-type test statistics from Theorem \ref{THM:7asymp_power_one} and
Corollary \ref{COR:7asymp_level_one} and these have been presented in the
following theorem.

\begin{theorem}
\label{Theorem10} Under the assumptions of Theorem \ref{THM:7asymp_power_one},
the power and level influence functions of the proposed Wald-type test
statistic $W_{n}$ is given by
\begin{equation}
\mathcal{PIF}(\boldsymbol{w},W_{n},G_{\boldsymbol{\beta}_{0}})\cong
K_{r}^{\ast}\left(  \boldsymbol{s}^{T}\left(  \boldsymbol{\beta}_{0}\right)
\mathbf{d}\right)  \boldsymbol{s}^{T}(\boldsymbol{\beta}_{0})\mathcal{IF}%
(\boldsymbol{w},\boldsymbol{T}_{\lambda},G_{\boldsymbol{\beta}_{0}%
}),\label{EQ:7PIF_simpleTest1}%
\end{equation}
with $\boldsymbol{s}^{T}(\boldsymbol{\beta}_{0})=\boldsymbol{d}^{T}%
\boldsymbol{M}\left(  \boldsymbol{M}^{T}\boldsymbol{\Sigma}_{\lambda}\left(
\boldsymbol{\beta}_{0}\right)  \boldsymbol{M}\right)  ^{-1}\boldsymbol{M}^{T}$
and
\[
K_{r}^{\ast}(s)=e^{-\frac{s}{2}}\sum\limits_{v=0}^{\infty}\frac{s^{v-1}%
}{v!2^{v}}\left(  2v-s\right)  P\left(  \chi_{r+2v}^{2}>\chi_{r,\alpha}%
^{2}\right)  ,
\]
and
\[
\mathcal{LIF}(\boldsymbol{w},W_{n},G_{\boldsymbol{\beta}_{0}})=0.
\]
Further, the derivative of $\alpha(\varepsilon,\boldsymbol{w})$ of any order
with respect to $\varepsilon$ will be zero at $\varepsilon=0$, implying that
the level influence function of any order will be zero.
\end{theorem}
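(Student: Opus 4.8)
\noindent\emph{Proof strategy.} The plan is to obtain both influence functions by differentiating, term by term in $\varepsilon$ and then evaluating at $\varepsilon=0$, the explicit infinite-series approximations of the asymptotic power and asymptotic level already supplied by Theorem~\ref{THM:7asymp_power_one}(ii) and Corollary~\ref{COR:7asymp_level_one}. To organize the computation I would write $\boldsymbol{A}=(\boldsymbol{M}^{T}\boldsymbol{\Sigma}_{\lambda}(\boldsymbol{\beta}_{0})\boldsymbol{M})^{-1}$, $\boldsymbol{t}(\varepsilon)=\boldsymbol{M}^{T}\widetilde{\boldsymbol{d}}_{\varepsilon,\boldsymbol{w},\lambda}(\boldsymbol{\beta}_{0})=\boldsymbol{M}^{T}\boldsymbol{d}+\varepsilon\,\boldsymbol{M}^{T}\mathcal{IF}(\boldsymbol{w},\boldsymbol{T}_{\lambda},G_{\boldsymbol{\beta}_{0}})$ and $u(\varepsilon)=\boldsymbol{t}(\varepsilon)^{T}\boldsymbol{A}\,\boldsymbol{t}(\varepsilon)$, so that Theorem~\ref{THM:7asymp_power_one}(ii) reads $\pi(\boldsymbol{\beta}_{n},\varepsilon,\boldsymbol{w})\cong\sum_{v\geq 0}C_{v}(\varepsilon)\,P_{v}$ with $C_{v}(\varepsilon)=u(\varepsilon)^{v}e^{-u(\varepsilon)/2}/(v!\,2^{v})$ and $P_{v}=P(\chi_{r+2v}^{2}>\chi_{r,\alpha}^{2})$. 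Since $\boldsymbol{t}$ is affine in $\varepsilon$, $u$ is a quadratic polynomial in $\varepsilon$ and the coefficients $u^{v}/(v!2^{v})$ decay super-exponentially in $v$, so the series and its $\varepsilon$-derivative converge locally uniformly; justifying this term-by-term differentiation (a routine dominated-convergence argument) is the only mildly technical step.

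For the power influence function I would differentiate a generic term,
\[
\frac{d}{d\varepsilon}C_{v}(\varepsilon)=\frac{u'(\varepsilon)\,e^{-u(\varepsilon)/2}}{v!\,2^{v+1}}\,u(\varepsilon)^{v-1}\bigl(2v-u(\varepsilon)\bigr),
\]
and then evaluate at $\varepsilon=0$. Using $\boldsymbol{t}(0)=\boldsymbol{M}^{T}\boldsymbol{d}$ and $\boldsymbol{t}'(0)=\boldsymbol{M}^{T}\mathcal{IF}(\boldsymbol{w},\boldsymbol{T}_{\lambda},G_{\boldsymbol{\beta}_{0}})$ gives $u(0)=\boldsymbol{d}^{T}\boldsymbol{M}\boldsymbol{A}\boldsymbol{M}^{T}\boldsymbol{d}=\boldsymbol{s}^{T}(\boldsymbol{\beta}_{0})\boldsymbol{d}$ and $u'(0)=2\boldsymbol{d}^{T}\boldsymbol{M}\boldsymbol{A}\boldsymbol{M}^{T}\mathcal{IF}(\boldsymbol{w},\boldsymbol{T}_{\lambda},G_{\boldsymbol{\beta}_{0}})=2\boldsymbol{s}^{T}(\boldsymbol{\beta}_{0})\mathcal{IF}(\boldsymbol{w},\boldsymbol{T}_{\lambda},G_{\boldsymbol{\beta}_{0}})$, with $\boldsymbol{s}^{T}(\boldsymbol{\beta}_{0})=\boldsymbol{d}^{T}\boldsymbol{M}\boldsymbol{A}\boldsymbol{M}^{T}$ exactly as in the statement. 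Summing over $v$, factoring $\tfrac{1}{2}u'(0)$ out of the result, and observing that the remaining series is precisely $K_{r}^{\ast}(u(0))$ by the definition of $K_{r}^{\ast}$, one obtains $\mathcal{PIF}(\boldsymbol{w},W_{n},G_{\boldsymbol{\beta}_{0}})=\tfrac{1}{2}u'(0)\,K_{r}^{\ast}(u(0))=K_{r}^{\ast}\!\bigl(\boldsymbol{s}^{T}(\boldsymbol{\beta}_{0})\boldsymbol{d}\bigr)\,\boldsymbol{s}^{T}(\boldsymbol{\beta}_{0})\,\mathcal{IF}(\boldsymbol{w},\boldsymbol{T}_{\lambda},G_{\boldsymbol{\beta}_{0}})$, which is (\ref{EQ:7PIF_simpleTest1}). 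The only real ``choice'' in the argument is recognizing that the residual series matches the definition of $K_{r}^{\ast}$; everything else is bookkeeping.

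For the level influence function I would specialize $\boldsymbol{d}=\boldsymbol{0}$ (equivalently, invoke Corollary~\ref{COR:7asymp_level_one} directly). Then $\boldsymbol{t}(\varepsilon)=\varepsilon\,\boldsymbol{M}^{T}\mathcal{IF}(\boldsymbol{w},\boldsymbol{T}_{\lambda},G_{\boldsymbol{\beta}_{0}})$, so $u(\varepsilon)=\varepsilon^{2}c$ with $c=\mathcal{IF}(\boldsymbol{w},\boldsymbol{T}_{\lambda},G_{\boldsymbol{\beta}_{0}})^{T}\boldsymbol{M}\boldsymbol{A}\boldsymbol{M}^{T}\mathcal{IF}(\boldsymbol{w},\boldsymbol{T}_{\lambda},G_{\boldsymbol{\beta}_{0}})\geq 0$, and each $C_{v}(\varepsilon)=(\varepsilon^{2}c)^{v}e^{-\varepsilon^{2}c/2}/(v!\,2^{v})$; hence $\alpha(\varepsilon,\boldsymbol{w})$ depends on $\varepsilon$ only through $\varepsilon^{2}$. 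Differentiating once and setting $\varepsilon=0$ then gives $\mathcal{LIF}(\boldsymbol{w},W_{n},G_{\boldsymbol{\beta}_{0}})=0$, and, $\alpha(\varepsilon,\boldsymbol{w})$ being an even function of $\varepsilon$, all its odd-order derivatives vanish at $\varepsilon=0$, which yields the asserted stability of the level to higher order. I do not expect a genuine obstacle here: once the series of Theorem~\ref{THM:7asymp_power_one} is in hand the whole proof is a chain-rule computation, and the only places needing a little care are the term-by-term differentiation of the series and the algebraic identification of the residual series with $K_{r}^{\ast}$.
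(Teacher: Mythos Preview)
Your derivation of the PIF is essentially the paper's own argument: both start from the series of Theorem~\ref{THM:7asymp_power_one}(ii), differentiate term by term via the chain rule, and identify the remaining series with $K_{r}^{\ast}$. You route the chain rule through the scalar $u(\varepsilon)=\boldsymbol{t}(\varepsilon)^{T}\boldsymbol{A}\boldsymbol{t}(\varepsilon)$, while the paper routes it through the vector $\boldsymbol{t}$ and records $\partial C_{v}(\boldsymbol{t},\boldsymbol{A})/\partial\boldsymbol{t}$; the two computations are the same up to bookkeeping. You also add a justification for interchanging sum and derivative, which the paper omits.

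On the LIF there is a small gap worth flagging. Your evenness argument (that $\alpha(\varepsilon,\boldsymbol{w})$ depends on $\varepsilon$ only through $\varepsilon^{2}$) correctly gives $\mathcal{LIF}=0$ and more generally forces all \emph{odd}-order $\varepsilon$-derivatives to vanish at $\varepsilon=0$, but it does not give vanishing of the even-order ones, which the last sentence of the theorem asserts. Indeed, writing $\alpha(\varepsilon,\boldsymbol{w})=h(\varepsilon^{2})$ with $h(u)=\sum_{v}(cu)^{v}e^{-cu/2}/(v!\,2^{v})\,P_{v}$ and $c=\mathcal{IF}^{T}\boldsymbol{M}\boldsymbol{A}\boldsymbol{M}^{T}\mathcal{IF}$, one finds $\partial^{2}\alpha/\partial\varepsilon^{2}\big|_{\varepsilon=0}=2h'(0)=c\,(P_{1}-P_{0})$, which is nonzero whenever $c>0$ since $P_{1}=P(\chi_{r+2}^{2}>\chi_{r,\alpha}^{2})\neq P_{0}=\alpha$. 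The paper's own proof actually treats only the PIF and says nothing about the LIF clause, so you go further than the paper here; just be aware that the ``any order'' claim in the statement appears to overreach, and your even-function argument proves exactly the (correct) odd-order version.
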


\begin{proof}
We start with the expression of $\pi(\boldsymbol{\beta}_{n},\varepsilon
,\boldsymbol{w})$ from Theorem \ref{THM:7asymp_power_one}. Clearly, by
definition of PIF and using the chain rule of derivatives, we get
\begin{align*}
&  \mathcal{PIF}(\boldsymbol{w},W_{n},G_{\boldsymbol{\beta}_{0}}%
)=\frac{\partial}{\partial\varepsilon}\left.  \pi(\boldsymbol{\beta}%
_{n},\varepsilon,\boldsymbol{w})\right\vert _{\varepsilon=0}\\
&  \cong\sum\limits_{v=0}^{\infty}\frac{\partial}{\partial\varepsilon}\left.
C_{v}\left(  \boldsymbol{M}^{T}\widetilde{\boldsymbol{d}}_{\varepsilon
,\boldsymbol{w},\lambda}(\boldsymbol{\beta}_{0}),\left(  \boldsymbol{M}%
^{T}\boldsymbol{\Sigma}_{\lambda}\left(  \boldsymbol{\beta}_{0}\right)
\boldsymbol{M}\right)  ^{-1}\right)  \right\vert _{\varepsilon=0}P\left(
\chi_{r+2v}^{2}>\chi_{r,\alpha}^{2}\right) \\
&  \cong\sum\limits_{v=0}^{\infty}\frac{\partial}{\partial\boldsymbol{t}^{T}%
}\left.  C_{v}\left(  \boldsymbol{M}^{T}\boldsymbol{t},\left(  \boldsymbol{M}%
^{T}\boldsymbol{\Sigma}_{\lambda}\left(  \boldsymbol{\beta}_{0}\right)
\boldsymbol{M}\right)  ^{-1}\right)  \right\vert _{\boldsymbol{t}%
=\widetilde{\boldsymbol{d}}_{0,\boldsymbol{w},\lambda}(\boldsymbol{\beta}%
_{0})}\frac{\partial}{\partial\varepsilon}\left.  \widetilde{\boldsymbol{d}%
}_{\varepsilon,\boldsymbol{w},\lambda}(\boldsymbol{\beta}_{0})\right\vert
_{\varepsilon=0}P\left(  \chi_{r+2v}^{2}>\chi_{r,\alpha}^{2}\right)  .
\end{align*}
Now $\widetilde{\boldsymbol{d}}_{0,\boldsymbol{w},\lambda}(\boldsymbol{\beta
}_{0})=\boldsymbol{d}$ and standard differentiations give
\[
\frac{\partial}{\partial\varepsilon}\widetilde{\boldsymbol{d}}_{\varepsilon
,\boldsymbol{w},\lambda}(\boldsymbol{\beta}_{0})=\mathcal{IF}(\boldsymbol{w}%
,\boldsymbol{T}_{\lambda},G_{\boldsymbol{\beta}_{0}}),
\]
and
\[
\frac{\partial}{\partial\boldsymbol{t}}C_{v}\left(  \boldsymbol{t}%
,\boldsymbol{A}\right)  =\frac{\left(  \boldsymbol{t}^{T}\boldsymbol{At}%
\right)  ^{v-1}}{v!2^{v}}\left(  2v-\boldsymbol{t}^{T}\boldsymbol{At}\right)
\boldsymbol{At}e^{-\frac{1}{2}\boldsymbol{t}^{T}\boldsymbol{At}}.
\]
Combining above results and simplifying, we get the required expression of PIF
as presented in the theorem.
\end{proof}

\bigskip It is clear from the above theorem that, the asymptotic level of the
proposed Wald-type test statistic will be unaffected by a contiguous
contamination for any values of the tuning parameter $\lambda$, whereas the
power influence function will be bounded whenever the influence function of
the MDPDE is bounded (which happens for all $\lambda>0$). Thus, the robustness
of the power of the proposed tests again turns out to be directly dependent on
the robustness of the MDPDE $\boldsymbol{\beta}_{\lambda}$ used in
constructing the test. In particular, the asymptotic contiguous power of the
classical MLE based Wald-type test (at $\lambda=0$) will be non-robust whereas
that for the Wald-type tests with $\lambda>0$ will be robust under contiguous
contaminations and this robustness increases as $\lambda$ increases further.

\section{Simulation study\label{sec5}}%

\begin{figure}[htbp]  \centering
$\
\begin{tabular}
[c]{cc}%
{\includegraphics[
height=3.2448in,
width=3.2448in
]%
{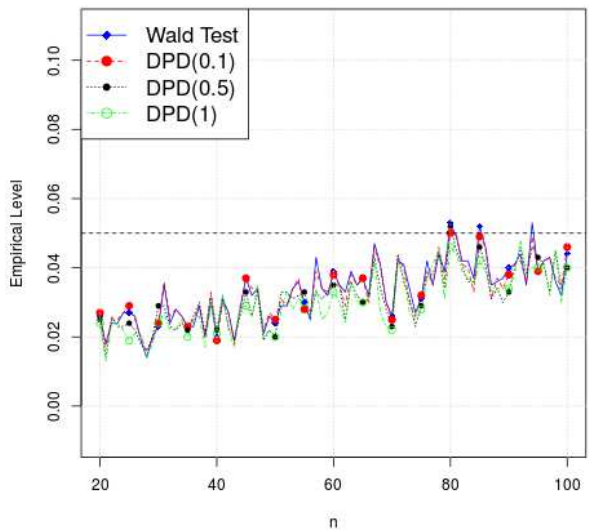}%
}
&
{\includegraphics[
height=3.2448in,
width=3.2448in
]%
{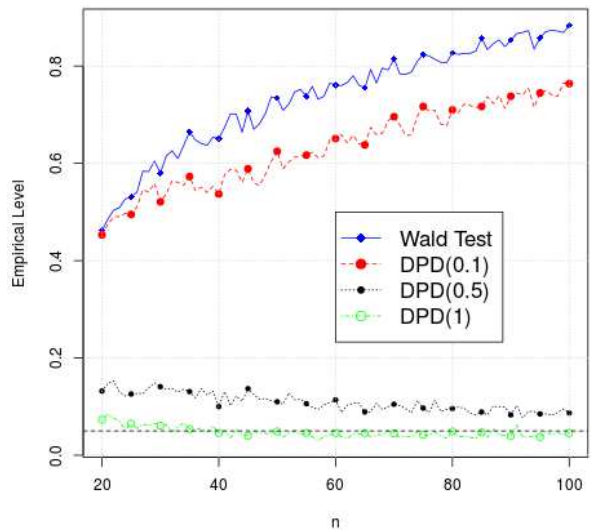}%
}
\\
(a) & (b)\\%
{\includegraphics[
height=3.2448in,
width=3.2448in
]%
{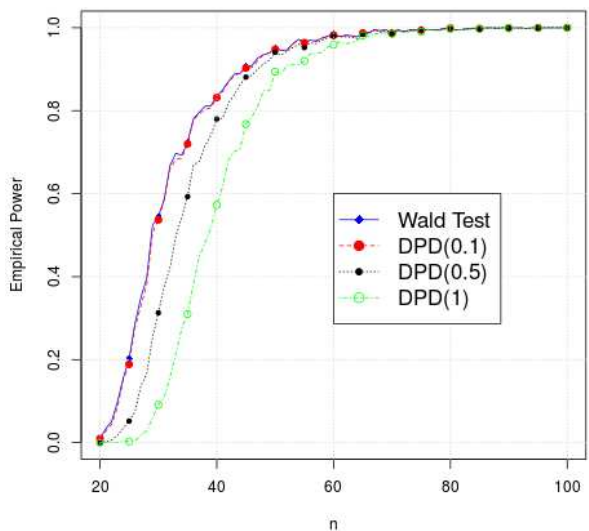}%
}
&
{\includegraphics[
height=3.2448in,
width=3.2448in
]%
{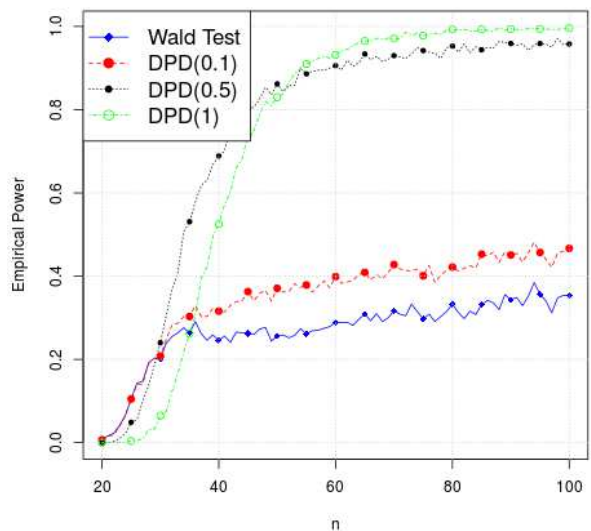}%
}
\\
(c) & (d)
\end{tabular}
\ \ $\caption{(a) Simulated levels of different tests for pure data; (b)
simulated levels of different tests for contaminated data; (c) simulated
powers of different tests for pure data; (d) simulated powers of different
tests for contaminated data.\label{fig:simulation}}%
\end{figure}%

In this section we have empirically demonstrated some of the strong robustness
properties of the density power divergence tests for the logistic regression
model. We considered two explanatory variables $x_{1}$ and $x_{2}$ in this
study, so $k=2$. These two variables are distributed according a standard
normal distribution $\mathcal{N}(\boldsymbol{0}{,}\boldsymbol{I}_{2\times2})$.
The response variables $Y_{i}$ are generated following the logit model as
given in (\ref{logit}). The true value of the parameter is taken as
$\boldsymbol{\beta}_{0}=(0,1,1)^{T}$. We considered the null hypothesis
$H_{0}:(\beta_{1},\beta_{2})^{T}=(1,1)^{T}$. It can be written in the form of
the general hypothesis given in (\ref{1.2}), where $\boldsymbol{m}=(1,1)^{T}$
and
\[
\boldsymbol{M}=\left(
\begin{array}
[c]{cc}%
0 & 0\\
1 & 0\\
0 & 1
\end{array}
\right)  .
\]
Our interest was in studying the observed level (measured as the proportion of
test statistics exceeding the corresponding chi-square critical value in a
large number -- here $1000$ -- of replications) of the test under the correct
null hypothesis. The result is given in Figure \ref{fig:simulation}(a) where
the sample size $n$ varies from 20 to 100. We have used several Wald-type test
statistics, corresponding to different minimum density power divergnece
estimators. We have used, $\lambda=0,\ 0.1,\ 0.5$ and $1$, in this particular
study. As it is previously mentioned, $\lambda=0$ is the classical Wald test
for the logistic regression model. The horizontal lines in the figure
represents the nominal level of 0.05. It may be noticed that all the tests are
slightly conservative for small sample sizes and lead to somewhat deflated
observed levels. In particular, the Wald-type tests with higher values of
$\lambda$ are relatively more conservative. However, this discrepancy
decreases rapidly as sample size increases.

To evaluate the stability of the level of the tests under contamination, we
repeated the tests for the same null hypothesis by adding 3\% outliers in the
data. For the outlying observations we first introduced the leverage points
where $x_{1}$ and $x_{2}$ are generated from $\mathcal{N}(\boldsymbol{\mu}%
_{c},\sigma\boldsymbol{I}_{2\times2})$ with $\boldsymbol{\mu}_{c}=(5,5)^{T}$
and $\sigma=0.01$. Then the values of the response variable corresponding to
those leverage points were altered to produce vertical outliers ($y_{t}=1$ was
converted to $y_{t}=0$). Figure \ref{fig:simulation}(b) shows that the levels
of the classical Wald test as well as DPD(0.1) test break down, whereas
Wald-type test statistics for $\lambda=0.5$ and $\lambda=1$ present highly
stable levels.

To investigate the power of the tests we changed the null hypothesis to
$H_{0}^{\ast}:(\beta_{1},\beta_{2})^{T}=(0,0)^{T}$, and kept the data
generating distributions as before, as well as the true value of the parameter
as $\boldsymbol{\beta}_{0}=(0,1,1)^{T}$. In terms of the null hypothesis in
(\ref{1.2}) the value of $\boldsymbol{m}$ is changed to $(0,0)^{T}$ whereas
$\boldsymbol{M}$ remained unchanged from the previous experiment. The
empirical power functions are calculated in the same manner as the levels of
the tests, and plotted in Figure \ref{fig:simulation}(c). The Wald test is the
most powerful under pure data. The power of the Wald-type test statistic for
$\lambda=0.1$ almost coincide with the classical Wald test in this case. The
performances of the Wald-type test statisdtics for $\lambda=0.5$ and
$\lambda=1$ are relatively poor, however, as the sample size increases to $60$
and beyond, the powers are practically identical.

Finally, we calculated the power functions under contamination for the above
hypothesis under the same setup as of the level contamination. The observed
powers of that the tests are given in Figure \ref{fig:simulation}(d). The
Wald-type test statistics for $\lambda=0.5$ and $\lambda=1$ show stable powers
under contamination, but the classical Wald test and the Wald-type test for
$\lambda=0.1$ exhibit a drastic loss in power. In very small sample sizes the
classical Wald test and the Wald-type test for $\lambda=0.1 $ have slightly
higher power than the other tests, but this must be a consequence of the
observed levels of these tests being higher than the latter for such sample
sizes. On the whole, the proposed Wald-type test statistics corresponding to
moderately large $\lambda$ appear to be quite competitive to the classical
Wald test for pure normal data, but they are far better in terms of robustness
properties under contaminated data.

\section{Real Data Examples\label{sec6}}

In this section we will explore the performance of the proposed Wald-type
tests in logistic regression models by applying it on different interesting
real data sets. The estimators are computed by minimizing the corresponding
density power divergence through the software \texttt{R}, and the minimization
is performed using \textquotedblleft optim\textquotedblright\ function.

\subsection{Students Data}

As an interesting data example leading to the logistic regression model, we
consider the students data set from Mu\~{n}oz-Garcia et al. (2006). The data
set consists of $576$ students of the University of Seville. The response
variable is the students aim to graduate after three years. The explanatory
variables are gender ($x_{i1}=0$ if male; $x_{i1}=1$ if female), entrance
examination (EE) in University ($x_{i2}=1$ if the first time; $x_{i2}=0$
otherwise) and sum of marks ($x_{i3}$) obtained for the courses of first term.
There were 61 distinct cases (i.e. $n=61$) in this study. We assume that the
response variable follows a binomial logistic regression model as mentioned in
Remark \ref{rem}. We are interested to test the null hypothesis that the
gender of student does not play any role on their aim. So the null hypothesis
is given by $H_{0}:\beta_{1}=0$. Figure \ref{fig:student} shows $p$-values of
Wald-type tests for different values of $\lambda$. Mu\~{n}oz-Garcia et al.
(2006) mentioned that 32nd observation is the most influential point as it has
a large residual and a high leverage value. If we use the classical Wald test
or Wald-type tests with small $\lambda$ under the full data, the null
hypothesis is rejected at $10\%$ level of significance. But this result is
clearly a false positive as the outlier deleted $p$-values for all $\lambda$
are close to $0.35$. On the other hand, Wald-type tests with large $\lambda$
give robust $p$-values in both situations.%

\begin{figure}[htbp]  \centering
$\
\begin{tabular}
[c]{c}%
$%
{\includegraphics[
trim=0.000000in 0.406024in 0.000000in 0.494130in,
height=1.7177in,
width=4.0108in
]%
{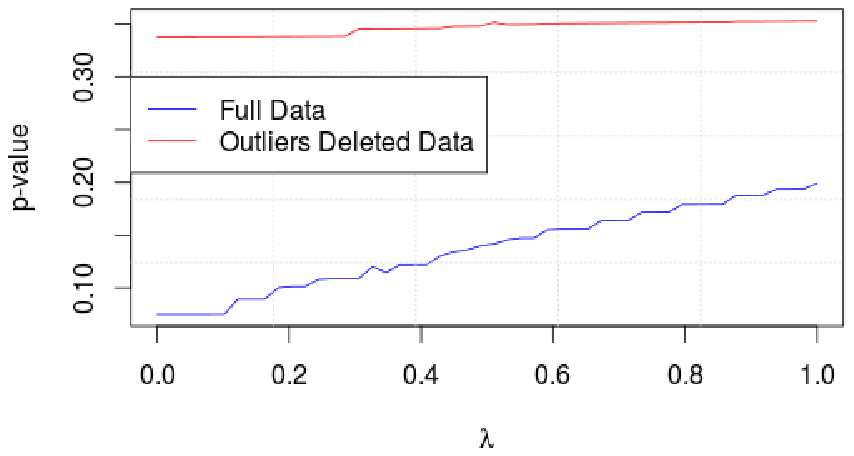}%
}
$%
\end{tabular}
\ $\caption{P-values of Wald-type tests for testing $H_0:\beta_1=0$ in
Students data.\label{fig:student}}%
\end{figure}%

\subsection{Lymphatic Cancer Data}

Brown (1980), Mart\'{\i}n and Pardo (2009) and Zelterman (2005, Section 3.3)
studied the data that focused on the evidence of lymphatic cancer in prostate
cancer patients for predicting lymph nodal involvement of cancer. There were
five covariates (three dichotomous and two continuous): the X-ray finding
($x_{i1}=1$ if present; $x_{i1}=0$ if absent), size of the tumor by palpation
($x_{i2}=1$ if serious; $x_{i2}=0$ if not serious), pathology grade by biopsy
($x_{i3}=1$ if serious; $x_{i3}=0$ if not serious), the age of the patient at
the time of diagnosis ($x_{i4}$) and serum acid phosphatase level ($x_{i5}$).
The diagnostics was associated with $53$ individuals. An ordinary logistic
model is assumed here. We are interested to test the significance of the size
of the tumor on the response variable, so the null hypothesis is taken as
$H_{0}:\beta_{2}=0$. The $p$-values of Wald-type tests for different values of
$\lambda$ are given in Figure \ref{fig:cancer}. Mart\'{\i}n and Pardo (2009)
noticed that the 24th observation is an influential point. The $p$-value of
the classical Wald test under the full data is $0.0430$, but if the outlier is
deleted it becomes $0.0668$. So if we consider a test at $5\%$ level of
significance, the decision of the test changes when we delete just one
outlying observation. However, Wald-type tests with high values of $\lambda$
always produce high $p$-values.%

\begin{figure}[htbp]  \centering
$\
\begin{tabular}
[c]{c}%
$%
{\includegraphics[
trim=0.000000in 0.406024in 0.000000in 0.493885in,
height=1.7186in,
width=4.0108in
]%
{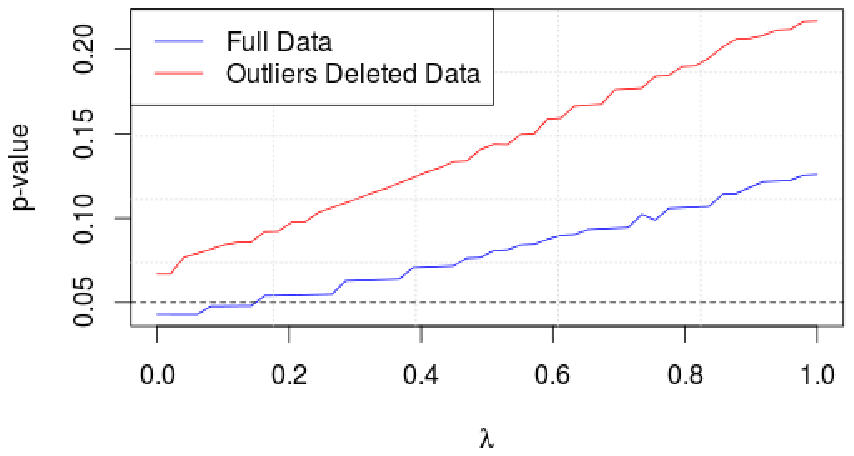}%
}
$%
\end{tabular}
\ $\caption{P-values of Wald-type tests for testing $H_0:\beta_2=0$ in
Lymphatic Cancer data.\label{fig:cancer}}%
\end{figure}%

\subsection{Vasoconstriction Data}

Finney (1947), Pregibon (1981) and Mart\'{\i}n and Pardo (2009) studied the
data where the interest is on the occurrence of vasoconstriction in the skin
of the finger. The covariates of the study were the logarithm of volume
($x_{i1}$) and the logarithm of rate ($x_{i2}$) of inspired air measured in
liters. Pregibon (1981) has shown that two observations, the 4th and 18th, are
not fitted well by the logistic model as they have large residuals. However,
it can be checked easily that these observations are only outliers in the
$y$-space and are not leverage points. Here we want to test that there is no
effect of the covariates, so the null hypothesis is given by $H_{0}:\beta
_{1}=\beta_{2}=0$. The $p$-value of the classical Wald test under the full
data is $0.0194$, and in the outlier deleted data it becomes $0.0371$. But,
Figure \ref{fig:vasoconstriction} shows that Wald-type tests with large
$\lambda$ produce large $p$-values.%

\begin{figure}[htbp]  \centering
$\
\begin{tabular}
[c]{c}%
$%
{\includegraphics[
trim=0.000000in 0.406024in 0.000000in 0.493885in,
natheight=2.447400in,
natwidth=3.646900in,
height=1.7186in,
width=4.0108in
]%
{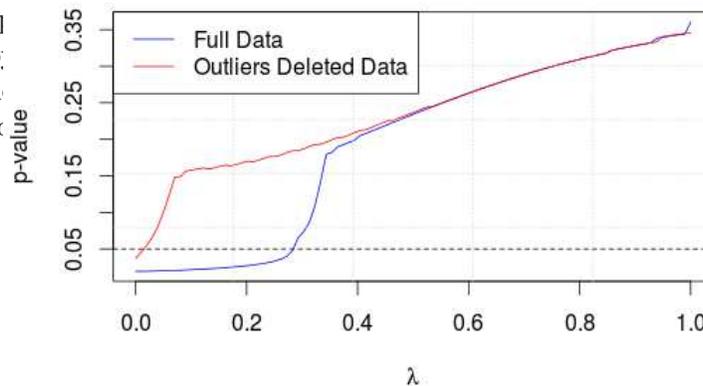}%
}
$%
\end{tabular}
\ $\caption{P-values of Wald-type tests for testing $H_0:\beta_1=\beta_2=0$
in Vasoconstriction data.\label{fig:vasoconstriction}}%
\end{figure}%

\subsection{Leukemia Data}

The data set consists of 33 cases on the survival of individuals diagnosed
with leukemia. The explanatory variables are white blood cell count ($x_{i1}$)
and another variable which indicates the presence or absence of a certain
morphologic characteristic in the white cells ($x_{i2}=1$ if present;
$x_{i2}=0$ if absent). This data set was also studied by Cook and Weisberg
(1982), Johnson (1985) and Mart\'{\i}n and Pardo (2009). They defined a
success to be patient survival in excess of 52 weeks. We are interested to
test the significance of two covariates, i.e. the null hypothesis is
$H_{0}:\beta_{1}=\beta_{2}=0$. The plot of the $p$-values of Wald-type tests
for different values of $\lambda$ is given in Figure \ref{fig:leukemia}.
Mart\'{\i}n and Pardo (2009) noticed that the 15th observation is an
influential point. The $p$-value of the classical Wald test under the full
data is $0.0226$, but if the outlier is deleted it becomes $0.0683$. Thus, at
$5\%$ level of significance, the decision of the test depends on only one
outlying observation. In this case also Wald-type tests with high values of
$\lambda$ always produce high $p$-values.%

\begin{figure}[htbp]  \centering
$\
\begin{tabular}
[c]{c}%
$%
{\includegraphics[
trim=0.000000in 0.405289in 0.000000in 0.494130in,
height=1.7186in,
width=4.0108in
]%
{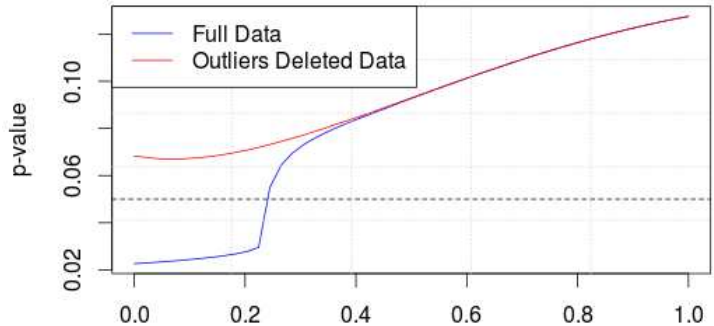}%
}
$%
\end{tabular}
\ $\caption{P-values of Wald-type tests for testing $H_0:\beta_1=\beta_2=0$
in Leukemia data.\label{fig:leukemia}}%
\end{figure}%

\section{Concluding Remarks}

Logistic regression for binary outcomes is one of the most popular and
successful tools in the statisticians toolbox. It is frequently used by
applied scientists of many disciplines to solve problems of real interest in
their doman of application. However, in the present age of big data, the need
for protection against data contamination and other modelling errors is
paramount, and, wherever possible, strong robustness qualities should be a
default requirement for statistical methods used in practice. In this paper we
have presented one such class of inference procedures. We have provided a
thorough theoretical evaluation of the proposted class of tests for testing
the linear hypothesis in the logistic regression model highlighting their
robustness advantages. We have also produced substantial numerical evidence,
including simulation results and a large number of real problems, to
demonstrate how these theoretical advantages translate in practice to real
gains. On the whole, we feel that the proposed tests will turn out to be an
useful method with significant practical application.

\textbf{Acknowledgement. }This research is partially supported by Grants
MTM2015-67057 and ECO2015-66593-P from Ministerio de Economia y Competitividad (Spain).

\end{document}